\numberwithin{equation}{section}
\theoremstyle{plain}
\newtheorem{theorem}{Theorem}[section]
\newtheorem{proposition}[theorem]{Proposition}
\newtheorem{lemma}[theorem]{Lemma}
\newtheorem{corollary}[theorem]{Corollary}
\theoremstyle{definition}
\newtheorem{remark}[theorem]{Remark}
\newtheorem{example}[theorem]{Example}
\newtheorem{question}[theorem]{Question}
\newcommand{\C}{\mathbb{C}}
\newcommand{\D}{\mathbb D}
\newcommand{\N}{\mathbb{N}}
\newcommand{\R}{\mathbb{R}}
\newcommand{\UU}{\mathcal U}
\renewcommand{\tilde}{\widetilde}
\begin{document}

\title{Chaos and frequent hypercyclicity for weighted shifts}
\author{St\'ephane Charpentier, Karl Grosse-Erdmann, Quentin Menet}
\address{St\'ephane Charpentier, 
Institut de Math\'ematiques de Marseille, UMR 7373, Aix-Marseille Universit\'e, 39 rue F. Joliot Curie, 13453 Marseille Cedex 13, FRANCE}
\email{stephane.charpentier.1@univ-amu.fr}
\address{Karl Grosse-Erdmann, 
Département de Math\'ematique, Universit\'e de Mons, 20 Place du Parc, 7000 Mons, BELGIUM}
\email{kg.grosse-erdmann@umons.ac.be}
\address{Quentin Menet, Département de Mathématique, Université de Mons, 20 Place du Parc, 7000 Mons, BELGIUM}
\email{quentin.menet@umons.ac.be}
\thanks{St\'ephane Charpentier and Quentin Menet were supported by the grant ANR-17-CE40-0021 of the French National Research Agency ANR (project Front). Karl Grosse-Erdmann was supported by FNRS grant PDR T.0164.16. Quentin Menet is a Research Associate of the Fonds de la Recherche Scientifique - FNRS}
\keywords{Frequently hypercyclic operator, chaotic operator, weighted shift operator, K\"othe sequence space, power series space}
\subjclass[2010]{47A16, 47B37, 46A45}

\begin{abstract}
Bayart and Ruzsa [\textit{Ergodic Theory Dynam. Systems} 35 (2015)] have recently shown that every frequently hypercyclic weight\-ed shift on $\ell^p$ is chaotic. This contrasts with an earlier result of Bayart and Grivaux [\emph{Proc. London Math. Soc.}~(3) 94 (2007)] who constructed a non-chaotic frequently hypercyclic weighted shift on $c_0$. We first generalize the Bayart-Ruzsa theorem to all Banach sequence spaces in which the unit sequences are a boundedly complete unconditional basis. We then study the relationship between frequent hypercyclicity and chaos for weighted shifts on Fr\'echet sequence spaces, in particular on K\"othe sequence spaces, and then on the special class of power series spaces. We obtain, rather curiously, that every frequently hypercyclic weighted shift on $H(\mathbb{D})$ is chaotic, while $H(\mathbb{C})$ admits a non-chaotic frequently hypercyclic weighted shift.
\end{abstract}

\maketitle

\section{Introduction}\label{sec1}

Chaos and frequent hypercyclicity are two of the most important notions in linear dynamics. An operator $T$ on a separable Fr\'echet space $X$ is called chaotic if it admits a dense orbit (i.e., it is hypercyclic) and if it has a dense set of periodic points. If one demands from an orbit that it combines both aspects of chaos, i.e., that it is dense and that it returns often, one arrives at the notion of frequent hypercyclicity: A vector $x\in X$ is called frequently hypercyclic for $T$ if, for any non-empty open set $U$ in $X$, 
\[
\underline{\text{dens}}\{n\geq 0 : T^n x\in U\} >0;
\]
recall that, for a subset $A\subset \mathbb{N}_0$, $\underline{\text{dens}}(A) = \liminf_{N\to\infty}\frac{1}{N+1}\text{card}\{n\leq N : n\in A\}$ denotes its lower (asymptotic) density. The operator $T$ is called frequently hypercyclic if it admits a frequently hypercyclic vector. This notion was introduced by Bayart and Grivaux \cite{BaGr04}, \cite{BaGr06}. For more information on linear dynamics we refer to \cite{BaMa09} and \cite{GrPe11}.

The precise relationship between chaos and frequent hypercyclicity has been intriguing researchers for the last decade. Despite appearances, these notions have turned out to be independent. First, Bayart and Grivaux \cite{BaGr07} showed that a frequently hypercyclic operator need not be chaotic. Indeed, they constructed a frequently hypercyclic weighted backward shift on $c_0$ that does not even have a single non-trivial periodic point; see also \cite{BoGr18}. Recently, the third author \cite{Men17} constructed operators on any of the spaces $\ell^p$, $1\leq p<\infty$, and $c_0$ that are chaotic but not frequently hypercyclic. 

One of the best understood classes of operators in linear dynamics is that of weighted (backward) shifts on sequence spaces. Given a sequence $w=(w_n)_n$ of non-zero scalars, the corresponding weighted shift operator is formally given by
\[
B_w (x_n)_{n\geq 0} = (w_{n+1}x_{n+1})_{n\geq 0}.
\]
For these operators, under suitable assumptions, chaos does imply frequent hypercyclicity.

Indeed, let $X$ be a Fr\'echet sequence space, that is, a Fr\'echet space of (real or complex) sequences $x=(x_n)_{n\geq 0}$ such that each coordinate functional $x\mapsto x_n$, $n\geq 0$, is continuous. By the closed graph theorem, a weighted  shift $B_w$ defines a (continuous) operator on $X$ as soon as it maps the space $X$ into itself. Let $e_n$, $n\geq 0$, denote the unit sequences. Under the assumption that $(e_n)_n$ is an unconditional basis in the Fr\'echet sequence space $X$ then a weighted  shift $B_{w}$ on $X$ is known to be chaotic if and only if the series
\begin{equation}\label{eq0}
\sum _{n\geq 0} \frac{1}{w_1\cdots w_n}e_n
\end{equation}
converges in $X$; see \cite{Gro00}. It then follows from the Frequent Hypercyclicity Criterion that, in this case, $B_w$ is also frequently hypercyclic; see \cite[Corollary 9.14]{GrPe11}.

We know from the result of Bayart and Grivaux cited above that not every frequently hypercyclic weighted shift on $c_0$ is chaotic. In light of this, the following recent result of Bayart and Ruzsa \cite{BaRu15} came as a surprise.

\begin{theorem}[Bayart, Ruzsa]\label{thm-bayruz}
Let $1\leq p<\infty$ and let $w=(w_n)_{n}$ be a bounded sequence of non-zero scalars. Then the weighted shift $B_{w}$ is frequently hypercyclic on $\ell^p$ if and only if it is chaotic.
\end{theorem}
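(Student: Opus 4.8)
The forward implication, that a chaotic $B_w$ is frequently hypercyclic, is already in hand: on $\ell^p$ the series \eqref{eq0} converges if and only if $\sum_n |v_n|^p<\infty$, where $v_n:=1/(w_1\cdots w_n)$ and $v_0:=1$, and one then invokes the Frequent Hypercyclicity Criterion as recalled above. So the whole content is the converse, and the plan is to prove the necessary condition: if $B_w$ is frequently hypercyclic on $\ell^p$, then $\sum_n|v_n|^p<\infty$. First I would normalise. Conjugating $B_w$ by the isometric diagonal operator $De_n=\lambda_n e_n$ with $\lambda_n=\overline{w_1\cdots w_n}/|w_1\cdots w_n|$ turns all weights positive, while changing neither frequent hypercyclicity (conjugacy preserves it) nor the quantity $\sum_n|v_n|^p$; so assume $w_n>0$, whence $v_n>0$, and write $W:=\sup_n w_n<\infty$.

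Next I would exploit a single frequently hypercyclic vector $x$ together with the explicit coordinate formula $(B_w^nx)_k=\tfrac{v_k}{v_{k+n}}\,x_{k+n}$. Fix $\varepsilon\in(0,1)$ and set $A:=\{n: \|B_w^nx-e_0\|_p<\varepsilon\}$; by frequent hypercyclicity $\underline{\text{dens}}(A)\ge\delta>0$. Reading off the coordinates of $B_w^nx$ for $n\in A$ gives two facts: the zeroth coordinate yields the lower bound $|x_n|>(1-\varepsilon)v_n$, while the $k$-th coordinate ($k\ge1$) yields, for every $m>n$, the upper bound $|x_m|<\varepsilon\,v_m/v_{m-n}$. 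The lower bound already gives $\sum_{n\in A}v_n^p\le(1-\varepsilon)^{-p}\|x\|_p^p<\infty$, i.e.\ summability of $v^p$ along the positive-density return set. Feeding the lower bound at $m$ into the upper bound shows that for any two return times $n<m$ in $A$ one has $v_{m-n}<\varepsilon/(1-\varepsilon)$; thus $v$ is small on the whole difference set $A-A$. Since $A$ has positive density, $A-A$ is syndetic, so it meets every interval of some fixed length $R$; combined with the ``persistence'' estimate $v_i\le v_d\,W^{\,d-i}$ (valid for $d\ge i$ because $w$ is bounded), this forces $v_i\le W^{R}\,\varepsilon/(1-\varepsilon)$ for all $i$. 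Hence frequent hypercyclicity already implies that $(v_n)$ is \emph{bounded}.

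The crux --- and the step I expect to be the main obstacle --- is to improve this from boundedness of $(v_n)$ to genuine summability $\sum_n v_n^p<\infty$. The difficulty is exactly the gap structure of the return set $A$: one controls $v^p$ cheaply on $A$, and one can transfer control to an index $m\notin A$ only through its nearest return time, paying an exponential factor $W^{\,p\,g}$ where $g$ is the distance from $m$ to $A$, so a single long gap in $A$ defeats this naive transfer. Resolving it requires the quantitative heart of the argument: rather than one target and one $\varepsilon$, I would run the extraction above at many scales and budget the finite total mass $\|x\|_p^p$ against the density of returns. Concretely I would argue by contradiction, assuming $\sum_n v_n^p=\infty$ while $(v_n)$ is bounded (so the divergence is ``spread out''), and use the upper bounds on $|x_m|$ for $m$ just after return times, together with a counting/averaging estimate, to show that the returns to a neighbourhood of $e_0$ cannot occur with positive lower density --- producing more $\ell^p$-mass along the orbit than $x$ can carry, and contradicting the choice of $x$.

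Finally, it is worth isolating the structural reason the statement is special to $\ell^p$ and fails for $c_0$: the target $\sum_n v_ne_n$ of the chaos condition converges as soon as its partial sums are norm-bounded, because the unit vector basis of $\ell^p$ ($1\le p<\infty$) is boundedly complete, whereas in $c_0$ it is not. Every conclusion the dynamics forces on $(v_n)$ is of boundedness type, which a boundedly complete basis upgrades to convergence but $c_0$ does not --- precisely the slack exploited by the Bayart--Grivaux non-chaotic example, and precisely the property that the general Banach-space version of this theorem abstracts.
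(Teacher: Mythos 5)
Your setup is correct and matches the beginning of the paper's argument (the proof of Theorem \ref{thm-Banach}, specialized to $\ell^p$): from a return set $A=\{n:\|B_w^nx-e_0\|_p<\varepsilon\}$ of positive lower density you correctly extract $|x_n|>(1-\varepsilon)v_n$ on $A$, the bound $|x_m|<\varepsilon v_m/v_{m-n}$ for $n\in A$, $m>n$, hence smallness of $v$ on $A-A$ and, via syndeticity of $A-A$ and $v_i\le v_dW^{d-i}$, boundedness of $(v_n)$; your closing remark on bounded completeness is also exactly the structural point the paper isolates. But the proof has a genuine gap precisely where you flag "the crux": upgrading boundedness to $\sum_n v_n^p<\infty$ is not completed, only announced. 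Your third paragraph ("run the extraction at many scales\dots a counting/averaging estimate\dots contradiction") is a declaration of intent, not an argument, and it is exactly the hard core of Bayart--Ruzsa. Note that your estimates in fact yield the stronger uniform slice bound $\sum_{k\ge 1,\,k+n\in A}v_k^p\le \bigl(\tfrac{1+\varepsilon}{1-\varepsilon}\bigr)^p$ for every $n\in A$ (this is Remark \ref{rem1}(a) in the paper), but uniformly bounded slices over infinitely many $n\in A$ do not by themselves control $\sum_k v_k^p$ over $A-A$: a given difference $k$ could be realized by too few pairs for any naive averaging to bite.

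What closes the gap in the paper is a specific additive-combinatorial ingredient: Theorem \ref{thm-erdos}, a quantitative strengthening of an Erd\H{o}s--S\'ark\"ozy-type result (proved by Bayart and Ruzsa, and stated by them only in a weaker form), asserting that for a set $A$ of upper density $\delta$ there is a \emph{syndetic} set $F$ of differences $k$ each realized along a common subsequence $(N_j)$ with density $>\delta^2-\varepsilon$. This is fed into Lemma \ref{lemma1}: one forms the averaged vectors $y_{N,M}=\frac{1}{N+1}\sum_{n\in A,\,n\le N}\sum_{m\in A,\,n\le m\le n+M}v_{m-n}e_{m-n}$, reorders as in \eqref{eq1} so that the coefficient of $v_ke_k$ is the representation density of $k$, and deduces that the partial sums $\sum_{k\in F,\,k\le M}v_ke_k$ are bounded, i.e.\ $\sum_{k\in F}v_k^p<\infty$ on a syndetic $F$. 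From that point your own gap-filling device ($v_i\le v_dW^{d-i}$ with $d\in F$ nearest above $i$; the paper instead applies $B_w^k$, $k\le K$, in Lemma \ref{lemma2}, which works without bounded weights) does finish the proof, and bounded completeness of $(e_n)$ in $\ell^p$ converts bounded partial sums into condition \eqref{eq0}. So the missing idea is not a routine estimate you could expect to supply in passing: without Theorem \ref{thm-erdos} (or an equivalent positive-proportion statement about the difference set), your contradiction scheme would have to reprove it, and your proposal as written does not.
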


So what do the spaces $\ell^p$ possess that $c_0$ does not? This question was the starting point of our present investigation. In Section \ref{sec2} we will show that the result of Bayart and Ruzsa extends to all Banach sequence spaces for which $(e_n)_n$ is a boundedly complete unconditional basis. Going beyond the class of Banach spaces we then turned our attention to K\"othe sequence spaces, which are important generalizations of the spaces $\ell^p$ and $c_0$. The K\"othe sequence space of order $p$ is defined as
\[
\lambda^p(A) = \Big\{ x=(x_n)_{n\geq 0} : \text{for all $m\geq 1$, } \sum_{n\geq 0} |x_n|^pa_{m,n}<\infty\Big\},
\] 
while the K\"othe sequence space of order $0$ is given by
\[
c_0(A) = \Big\{ x=(x_n)_{n\geq 0} : \text{for all $m\geq 1$, } \lim_{n\to\infty}|x_n|a_{m,n}=0\Big\},
\] 
where $A=(a_{m,n})_{m\geq 1,n\geq 0}$ is a matrix of strictly positive numbers such that, for all $m\geq 1$, $n\geq 0$, $
a_{m,n}\leq a_{m+1,n}$. In particular, if $a_{m,n}=1$ for every $m, n$ then $\lambda^p(A)=\ell^p$ and $c_0(A)=c_0$. Thus one could naively imagine that on K\"othe sequence spaces of order $p$ each frequently hypercyclic weighted shift is chaotic, while on K\"othe sequence spaces of order $0$ there exists a frequently hypercyclic weighted shift that is not chaotic. However, it is not seldom that $\lambda^p(A)= c_0(A)$ for any $1\le p<\infty$. This is, for instance, the case for the space $H(\C)$ of entire functions and for the space $H(\D)$ of holomorphic functions on the unit disk. The link between chaotic and frequently hypercyclic weighted shifts on these spaces is therefore particularly intriguing.

In order to be able to deal with these spaces, the main part of the paper is devoted to Fr\'echet sequence spaces, in varying degrees of generality. Two interesting phenomena arise. First, as we will see in Section \ref{sec3}, there are simple Fr\'echet sequence spaces that do not support any weighted shift, and there are natural K\"othe sequence spaces that do not support any hypercyclic weighted shift, so that our study may become vacuous. Secondly, bounded completeness is no longer sufficient for ensuring that frequent hypercyclicity implies chaos. Still, in Section \ref{sec2} we obtain a version of the Bayart-Ruzsa theorem for a class of Fr\'echet sequence spaces. In Section \ref{sec3} we spell out this result in the case of K\"othe sequence spaces, and we state sufficient conditions for the existence of a frequently hypercyclic weighted shift that is not chaotic. This will allow us to obtain the puzzling result that every frequently hypercyclic weighted shift on $H(\D)$ is chaotic, while there exists a frequently hypercyclic weighted shift on $H(\C)$ that is not chaotic. 

Finally, in Section~\ref{sec-power}, we generalize these two examples by studying the particularly important class of power series spaces (of finite or infinite type). We will obtain a full characterization of when every frequently hypercyclic weighted shift is chaotic on a class of power series spaces that satisfy some regularity assumption. In particular, it will appear that, on the one hand, there are Köthe sequence spaces $c_0(A)$, different from any $\lambda^p(A)$, $1\leq p <\infty$, on which any frequently hypercyclic weighted shift is chaotic; on the other hand, there are Köthe sequence spaces $\lambda^p(A)$, different from $c_0(A)$, on which there exists a non-chaotic frequently hypercyclic weighted shift.

\section{When frequent hypercyclicity implies chaos for weighted shifts}\label{sec2}

\subsection{The Bayart-Ruzsa theorem on Banach sequence spaces}\label{subsec-Banach}

The purpose of this subsection is to extend Theorem \ref{thm-bayruz} to a large class of Banach sequence spaces. Before stating this result we remark that Bayart and Ruzsa obtained a stronger version of their result: even $\mathcal{U}$-frequent hypercyclicity implies chaos (see \cite[Theorem 4]{BaRu15}). An operator $T$ on a Fr\'echet space $X$ is called $\mathcal{U}$-frequently hypercyclic if there is vector $x\in X$, also called $\mathcal{U}$-frequently hypercyclic, such that, for any non-empty open set $U$ in $X$, 
\[
\overline{\text{dens}}\{n\geq 0 : T^n x\in U\} >0,
\]
where, for a subset $A\subset \mathbb{N}_0$, $\overline{\text{dens}}(A) = \limsup_{N\to\infty}\frac{1}{N+1}\text{card}\{n\leq N : n\in A\}$ denotes its upper (asymptotic) density. This notion is obviously weaker than frequent hypercyclicity.

In our result, as is usual in the context of weighted  shifts, we will demand that $(e_n)_{n}$ is an unconditional basis, which is the case both for $c_0$ and the spaces $\ell^p$. What distinguishes these spaces, however, is that $(e_n)_{n}$ is boundedly complete in $\ell^p$ but not in $c_0$. Recall that a basis $(f_n)_n$ in a Fr\'echet space $X$ is called \textit{boundedly complete} if, for any sequence of scalars $x=(x_n)_n$, whenever the sequence
\[
\Big(\sum _{n=0}^N x_n f_n\Big)_{N\geq 0}
\]
is bounded in $X$ then it converges in $X$; see \cite{AlKa06}. 

\begin{theorem}\label{thm-Banach}
Let $X$ be a Banach sequence space for which $(e_n)_n$ is a boundedly complete unconditional basis. Let $B_w$ be a weighted shift on $X$. Then the following assertions are equivalent:
\begin{enumerate}
\item[\rm (i)] $B_{w}$ is $\UU$-frequently hypercyclic on $X$;
\item[\rm (ii)] $B_{w}$ is frequently hypercyclic on $X$;
\item[\rm (iii)] $B_{w}$ is chaotic on $X$;
\item[\rm (iv)] the series $\sum _{n\geq 0}\frac{1}{w_1\cdots w_n}e_n$ is convergent in $X$.
\end{enumerate}
\end{theorem}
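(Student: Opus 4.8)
The implications (iii) $\Rightarrow$ (ii) $\Rightarrow$ (i) are standard: chaos of a weighted shift under the convergence condition (iv) gives frequent hypercyclicity via the Frequent Hypercyclicity Criterion (as recalled in the introduction), and frequent hypercyclicity trivially implies $\UU$-frequent hypercyclicity. The equivalence (iii) $\Leftrightarrow$ (iv) is exactly the Grosso\v{s}ko–type characterization of chaos for weighted shifts with an unconditional basis cited in the introduction, so nothing new is needed there either. The entire content of the theorem is therefore the implication (i) $\Rightarrow$ (iv): if $B_w$ is $\UU$-frequently hypercyclic on $X$, then $\sum_n \frac{1}{w_1\cdots w_n}e_n$ converges in $X$.

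To prove (i) $\Rightarrow$ (iv), I would follow the strategy of Bayart and Ruzsa, replacing the $\ell^p$-norm estimates by arguments valid in any Banach sequence space with a boundedly complete unconditional basis. Fix a $\UU$-frequently hypercyclic vector $x$ for $B_w$. The key quantities are the ``backward iterates'' of the coordinates: for each $N$, look at how well the orbit of $x$ approximates a fixed target, say $e_0$ (or a suitable finite combination of unit vectors). If $B_w^n x$ is close to $e_0$, then in particular the coordinate of $B_w^n x$ in position $j$, which equals $(w_{n+1}\cdots w_{n+j})\, x_{n+j}$, must be small for $j \geq 1$ and close to $1$ for $j=0$; equivalently $x_{n+j} \approx \frac{1}{w_{n+1}\cdots w_{n+j}}$ for the relevant range of $j$. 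Because the set of such $n$ has positive upper density, one can extract, for each given length $L$, infinitely many (in fact a positive-density set of) indices $n$ for which the block $\big(x_{n}, x_{n+1}, \dots, x_{n+L}\big)$ is, after the appropriate weight rescaling, close to the head $\big(1, \frac{1}{w_{n+1}}, \dots, \frac{1}{w_{n+1}\cdots w_{n+L}}\big)$ of the series in (iv). Summing these near-disjoint blocks along a positive-density sequence of indices and using the fact that $x$ itself lies in $X$, one obtains a uniform bound, independent of $L$, on the norms of the partial sums $\sum_{j=0}^{L}\frac{1}{w_1\cdots w_j}e_j$. Here is where bounded completeness enters decisively: a uniform bound on the partial sums of a series in the unit vectors forces convergence. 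Unconditionality is used throughout to pass freely between the series and its rearranged/blocked versions and to compare norms of sub-sums.

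The main obstacle, and the place where real work is needed, is the extraction of the positive-density family of ``good blocks'' and the bookkeeping that turns $\UU$-frequent hypercyclicity (a statement about upper density) into a genuine uniform norm bound on the partial sums of the candidate series. In the $\ell^p$ setting this is done by a clever averaging/counting argument exploiting the additivity of the $p$-th power of the norm over disjoint supports; in a general Banach sequence space with an unconditional basis one no longer has this additivity, so the argument must be recast using the unconditional basis constant and, crucially, bounded completeness in place of the $\ell^p$ structure. One must be careful that the weights $(w_n)$ need not be bounded below, so the rescaling factors $w_{n+1}\cdots w_{n+j}$ can behave wildly; the argument should only ever use them in the combination $\frac{1}{w_1\cdots w_j}$ that actually appears in (iv), together with the trivial identity relating the coordinates of $B_w^n x$ to those of $x$. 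Once the uniform bound on partial sums is in hand, bounded completeness closes the argument immediately, giving (iv) and hence, via (iii) $\Leftrightarrow$ (iv), the full cycle of equivalences.
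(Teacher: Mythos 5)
You have the right architecture: the only non-trivial implication is (i)$\Rightarrow$(iv), the chain (iv)$\Rightarrow$(iii)$\Rightarrow$(ii)$\Rightarrow$(i) is standard, unconditionality must replace the additivity of $\|\cdot\|_p^p$ over disjoint supports, and bounded completeness converts a uniform bound on the partial sums $\sum_{n\le N}\frac{1}{w_1\cdots w_n}e_n$ into convergence. But there is a genuine gap at exactly the point you flag as ``the main obstacle'': you do not supply the mechanism that turns the positive-upper-density return set into a bound on \emph{all} partial sums, and the heuristic you offer in its place is not correct. If $A=\{n:\|B_w^nx-e_0\|\le \|e_0\|/(2C)\}$, then closeness of $B_w^nx$ to $e_0$ gives only the single lower bound $|x_n/v_n|\ge 1/2$ (where $v_n=1/(w_1\cdots w_n)$), plus the trivial upper bound that $\|B_w^nx\|$ is bounded; it does \emph{not} make the consecutive block $(x_n,\dots,x_{n+L})$ resemble, after rescaling, the head $(v_0,\dots,v_L)$ of the series --- the coordinates of $B_w^nx$ in positions $j\ge 1$ are merely \emph{small}, which gives no lower bound on $|x_{n+j}|$. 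What one actually controls from below are the coordinates $x_m$ for $m\in A$ only, and hence, inside $B_w^nx$ for $n\in A$, the terms $v_{m-n}e_{m-n}$ with $m\in A$, i.e.\ only the differences $k=m-n$ lying in the difference set of $A$.

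The paper bridges this gap with two ideas that are absent from your plan. First, a strengthened Erd\H{o}s--S\'ark\"ozy theorem (Theorem \ref{thm-erdos}, due to Bayart and Ruzsa): for a set $A$ of upper density $\delta>0$ there is a \emph{syndetic} set $F$ of integers $k$ for which $\mathrm{card}\{n\le N_j: n\in A\cap(A-k)\}/(N_j+1)>\delta^2-\varepsilon$ along a subsequence $(N_j)$. Averaging the sums $\sum_{m\in A,\,n\le m\le n+M}v_{m-n}e_{m-n}$ over $n\in A$, $n\le N_j$, and using unconditionality twice, this yields a uniform bound on $\sum_{k\in F,\,k\le M}v_ke_k$ (Lemma \ref{lemma1}). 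Second, a gap-filling step (Lemma \ref{lemma2}): since $F$ is syndetic, $\N_0=\bigcup_{k\le K}(F-k)$ for some $K$, and applying $B_w^0,\dots,B_w^K$ to $\sum_{n\in F,\,n\le N}v_ne_n$ and summing produces $\sum_{n\le F_N}m_{n,N}v_ne_n$ with integer coefficients $m_{n,N}\ge 1$, whence by continuity of $B_w$ and unconditionality the full partial sums are bounded. Only then does bounded completeness apply. Without the difference-set combinatorics you only ever see the sub-sums indexed by $F$, and without the shift-based gap-filling you cannot pass from $F$ to all of $\N_0$; your proposal as written would stall before reaching the uniform bound that bounded completeness needs.
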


As we have mentioned in the introduction, the implications {(iv)}$\Longrightarrow$(iii)$\Longrightarrow$(ii) hold in all Banach sequence spaces in which $(e_n)_n$ is an unconditional basis, while the implication (ii)$\Longrightarrow$(i) is trivial. It remains therefore to show that (i) implies (iv) under bounded completeness. 

The following lemma will be crucial in the proof. A subset $A\subset\N_0$ is called syndetic if it is infinite and of bounded gaps, that is, $\sup_{n\in A} \inf_{m\in A, m> n} (m-n)<\infty$.

\begin{lemma}\label{lemma1}
Let $X$ be a Banach space with an unconditional basis $(f_n)_{n\geq 0}$. Let $(\alpha _n)_{n\geq 0}$ be a sequence of scalars and $A\subset\N_0$ a subset of positive upper density such that, for some $R>0$, the family
\[
\Big(\frac{1}{N+1}\sum_{\substack{n\in A\\n\leq N}}\sum _{\substack{m\in A\\n\leq m\leq n+M}}\alpha _{m-n}f_{m-n}\Big)_{M,N\geq R}
\]
is bounded in $X$. Then there is a syndetic set $F\subset\N_0$ such that the sequence
\[
\Big(\sum_{n\in F,\,n\leq N} \alpha_n f_n\Big)_{N\geq 0}
\]
is bounded in $X$. 
\end{lemma}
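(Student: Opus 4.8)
The plan is to reindex the double sum so that the hypothesis becomes a statement about the autocorrelation of $A$, to recognize the resulting coefficients as correlation coefficients of a shift-invariant measure, and then to use a recurrence theorem to locate a syndetic set of gaps along which these coefficients stay bounded away from $0$. First I would substitute $k=m-n$ in the inner sum. Writing $c_N(k)=\#\{n\in A:n\le N,\ n+k\in A\}$ for the autocorrelation counting function, interchanging the two finite sums gives
\[
\frac{1}{N+1}\sum_{\substack{n\in A\\ n\le N}}\sum_{\substack{m\in A\\ n\le m\le n+M}}\alpha_{m-n}f_{m-n}=\sum_{k=0}^{M}\frac{c_N(k)}{N+1}\,\alpha_k f_k .
\]
Thus the hypothesis says precisely that $\big\|\sum_{k=0}^{M}\tfrac{c_N(k)}{N+1}\alpha_k f_k\big\|\le C_0$ for all $M,N\ge R$, for some constant $C_0$.

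Next I would extract good asymptotics for the coefficients $c_N(k)/(N+1)$. Since $A$ has positive upper density $d>0$, choose $N_j\to\infty$ with $\#(A\cap[0,N_j])/(N_j+1)\to d$. Viewing $x=\mathbf 1_A$ as a point of the compact space $\{0,1\}^{\N_0}$ with shift $S$, I form the empirical measures $\nu_{N_j}=\frac{1}{N_j+1}\sum_{n=0}^{N_j}\delta_{S^n x}$; passing to a further subsequence, $\nu_{N_j}\to\mu$ weak$^*$, where $\mu$ is $S$-invariant (Furstenberg correspondence). Writing $\Lambda=\{y:y_0=1\}$, the cylinders $\Lambda$ and $\Lambda\cap S^{-k}\Lambda$ are clopen, so evaluating $\nu_{N_j}$ on them yields $\tfrac{c_{N_j}(k)}{N_j+1}\to\beta_k:=\mu(\Lambda\cap S^{-k}\Lambda)$, with $\mu(\Lambda)=\beta_0=d>0$. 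Letting $j\to\infty$ for fixed $M$ in the bound above and using continuity of the norm, I obtain $\big\|\sum_{k=0}^{M}\beta_k\alpha_k f_k\big\|\le C_0$ for every $M$, so the sequence $\big(\sum_{k=0}^{M}\beta_k\alpha_k f_k\big)_M$ is bounded in $X$.

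The decisive step, and where I expect the main difficulty to lie, is to show that $\{k:\beta_k\ge\delta\}$ is syndetic for a suitable $\delta>0$; this is exactly Khintchine's recurrence theorem. With $U$ the (isometric) Koopman operator and $f=\mathbf 1_\Lambda$, one has $\beta_k=\langle U^kf,f\rangle$, a positive-definite sequence whose almost-periodic part is governed by the projection of $f$ onto the invariant vectors; since $\mu(\Lambda)=d>0$, Khintchine's theorem guarantees that for $\delta=d^2/2$ the set $F:=\{k:\beta_k\ge\delta\}$ is syndetic. (If one prefers to avoid the non-invertibility of $S$, pass to the natural extension before applying the theorem, and note that a set syndetic in $\Z$ restricts to a syndetic subset of $\N_0$.)

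Finally I would transfer the bound from the weights $\beta_k$ to the indicator of $F$ using unconditionality. Let $C$ be the unconditional constant of $(f_n)$, so that $\|\sum_k\lambda_k y_k f_k\|\le C\|\sum_k y_k f_k\|$ whenever $\lambda_k\in[-1,1]$. Applying this with the nonnegative multiplier $\lambda_k=\delta\,\mathbf 1_F(k)/\beta_k\in[0,1]$ to the vector $\sum_{k=0}^{M}\beta_k\alpha_k f_k$ gives
\[
\Big\|\sum_{\substack{k\in F\\ k\le M}}\alpha_k f_k\Big\|=\frac1\delta\Big\|\sum_{k=0}^{M}\lambda_k\,\beta_k\alpha_k f_k\Big\|\le\frac{C}{\delta}\,C_0 ,
\]
uniformly in $M$. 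Since the finitely many initial partial sums are trivially bounded, the sequence $\big(\sum_{k\in F,\,k\le N}\alpha_k f_k\big)_{N\ge0}$ is bounded, which is the desired conclusion.
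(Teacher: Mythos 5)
Your proof is correct, and the overall skeleton matches the paper's: you reindex the double sum to exhibit the coefficients $c_N(k)/(N+1)$ exactly as the paper does, and you finish with the same two-step multiplier trick from unconditionality (restrict to $F$, then divide out coefficients bounded below). The genuine difference is where the syndetic set comes from. The paper invokes, as a black box, a strengthening of an Erd\H{o}s--S\'ark\"ozy theorem that Bayart and Ruzsa proved combinatorially (its Theorem 2.4): there is a subsequence $(N_j)$ along which $\lim_j c_{N_j}(k)/(N_j+1)>\delta^2-\varepsilon$ for a syndetic set of $k$. You instead rederive precisely this input by the Furstenberg correspondence principle (empirical measures of $\mathbf 1_A$ under the shift converging weak$^*$ to an invariant measure $\mu$ with $\mu(\Lambda)=d$) together with Khintchine's recurrence theorem, which yields the syndeticity of $\{k:\beta_k\geq d^2/2\}$; your handling of the non-invertibility of the one-sided shift via the natural extension, and of the restriction of a $\Z$-syndetic set to $\N_0$, is sound. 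Your route is conceptually transparent and actually slightly cleaner mechanically: since the limits $\beta_k$ exist for all $k$ simultaneously along one subsequence, you can pass fully to the limit and apply the multiplier argument once with $\lambda_k=\delta\mathbf 1_F(k)/\beta_k$, whereas the paper must work with ``all sufficiently large $j$'' for each fixed $M$ because it only controls $k\in F\cap[0,M]$, a finite set, at a time. What the paper's approach buys is self-containedness within elementary combinatorics (the cited result is already in the literature) and no ergodic-theoretic machinery; what yours buys is that the key density-to-syndeticity step becomes a standard, well-understood theorem rather than an ad hoc refinement. Two cosmetic points: define $\lambda_k:=0$ for $k\notin F$ outright (as written, $\delta\mathbf 1_F(k)/\beta_k$ is $0/0$ when $\beta_k=0$), and for complex scalars the multiplier inequality holds with a constant depending only on the unconditional constant (possibly $2C$), which is harmless and is the same convention the paper uses.
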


The proof is based on Theorem \ref{thm-erdos} below, which is an improvement of a result due to Erd\H{o}s and S\'ark\"ozy that was implicitly obtained by Bayart and Ruzsa in order to prove Theorem \ref{thm-bayruz}. Note that these authors only state a weaker version \cite[Theorem 8]{BaRu15}.

\begin{theorem}[Bayart, Ruzsa]\label{thm-erdos}
Let $A\subset \N_0$ be a set of upper density $\delta:=\overline{\text{\emph{dens}}}(A)>0$, and let $0<\varepsilon <\delta^2$. Then there exists a strictly increasing sequence $(N_j)_j$ of positive integers such that the set
\[
\Big\{k\in \N_0:\quad \lim_{j\to\infty}\frac{\text{\emph{card}}\{n\leq N_j:n\in A\cap(A-k)\}}{N_j+1}>\delta^2-\varepsilon\Big\}
\]
is syndetic.
\end{theorem}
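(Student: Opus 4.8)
The plan is to fix once and for all a good sequence $(N_j)$ and to recognise the resulting correlation sequence as the Fourier transform of a positive measure on the circle. First I would choose $(N_j)$ strictly increasing with $\frac{1}{N_j+1}\operatorname{card}(A\cap[0,N_j])\to\delta$, and then, by a diagonal argument over the countably many values of $k$, thin it out so that
\[
\gamma(k):=\lim_{j\to\infty}\frac{\operatorname{card}\{n\le N_j:n\in A\cap(A-k)\}}{N_j+1}
\]
exists for every $k\ge 0$; with this choice the set in the statement is exactly $\{k:\gamma(k)>\delta^2-\varepsilon\}$. Extending $\gamma$ to $\Z$ by $\gamma(-k)=\gamma(k)$, I would verify that $\gamma$ is positive definite: for finitely supported scalars $(c_k)$,
\[
\sum_{k,l}c_k\bar c_l\,\gamma(k-l)=\lim_{j\to\infty}\frac{1}{N_j+1}\sum_{n}\Big|\sum_k c_k\mathbf 1_A(n+k)\Big|^2\ge 0,
\]
the only point to watch being the bounded index shifts needed to identify each inner limit with $\gamma(k-l)$, whose effect is $O(1/N_j)$. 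Herglotz's theorem then furnishes a positive measure $\nu$ on $\T=\R/\Z$ with $\gamma(k)=\hat\nu(k)=\int_\T e^{-2\pi ik\theta}\,d\nu(\theta)$ and $\nu(\T)=\gamma(0)=\delta$.

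The heart of the argument is the sharp lower bound $\nu(\{0\})\ge\delta^2$. Here I would use Fejér means: since $\sum_{|d|<K}(K-|d|)e^{-2\pi id\theta}=\big|\sum_{l=0}^{K-1}e^{-2\pi il\theta}\big|^2$, dominated convergence gives $\frac{1}{K^2}\sum_{|d|<K}(K-|d|)\gamma(d)\to\nu(\{0\})$ as $K\to\infty$. On the other hand, for fixed $K$ the same weighted sum equals $\lim_j \frac{1}{K^2(N_j+1)}\sum_n\big(\sum_{l=0}^{K-1}\mathbf 1_A(n+l)\big)^2$ up to boundary terms, and by Cauchy--Schwarz this is at least
\[
\lim_{j\to\infty}\frac{1}{K^2(N_j+1)^2}\big(K\operatorname{card}(A\cap[0,N_j])\big)^2=\delta^2.
\]
Letting $K\to\infty$ yields $\nu(\{0\})\ge\delta^2$. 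I would stress that the triangular Fejér weights are essential: replacing them by the flat average $\frac1K\sum_{d<K}\gamma(d)$ and bounding crudely loses a factor $2$ and gives only $\delta^2/2$, which is not enough.

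With $\nu(\{0\})\ge\delta^2$ in hand, syndeticity follows from a uniform lower bound on windowed averages of $\gamma$. Writing $\nu'=\nu-\nu(\{0\})\delta_0\ge 0$, I have $\gamma(k)=\nu(\{0\})+\hat{\nu'}(k)$ with $\nu'(\{0\})=0$, so it suffices to bound $\hat{\nu'}$ from below on long windows. For any $a$ and $L$,
\[
\frac1L\sum_{k=a}^{a+L-1}\hat{\nu'}(k)=\int_{\T} e^{-2\pi ia\theta}\,\frac{1}{L}\,\frac{1-e^{-2\pi iL\theta}}{1-e^{-2\pi i\theta}}\,d\nu'(\theta),
\]
and I would split $\nu'$ at a small radius $\eta_0$ about $0$: the kernel is bounded by $1$ everywhere and by $\frac{1}{L\sin\pi\eta_0}$ outside $[-\eta_0,\eta_0]$. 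Choosing $\eta_0$ so that $\nu'(\{|\theta|<\eta_0\})<\varepsilon/2$ (possible since $\nu'(\{0\})=0$) and then $L$ so large that $\nu'(\T)/(L\sin\pi\eta_0)<\varepsilon/2$, I obtain $\frac1L\sum_{k=a}^{a+L-1}\hat{\nu'}(k)>-\varepsilon$ for every $a$. Hence every window of length $L$ contains some $k$ with $\hat{\nu'}(k)>-\varepsilon$, that is, with $\gamma(k)>\nu(\{0\})-\varepsilon\ge\delta^2-\varepsilon$, so $\{k:\gamma(k)>\delta^2-\varepsilon\}$ has gaps bounded by $L$ and is syndetic.

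The main obstacle I expect is pinning the constant $\delta^2$ exactly (rather than $\delta^2/2$): this is what forces the Fejér-weighted Cauchy--Schwarz of the second step, together with careful control of the $O(K^2)$ boundary corrections along $(N_j)$. The passage from ``large spectral mass at $0$'' to syndeticity is comparatively soft; its only delicate feature is that the oscillatory factor $e^{-2\pi ia\theta}$ can be controlled uniformly in $a$ only away from the origin, which is precisely why the absence of mass of $\nu'$ at $0$ must be exploited.
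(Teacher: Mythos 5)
Your proof is correct, and the first thing to say is that the paper does not actually prove Theorem~\ref{thm-erdos}: it attributes the statement to Bayart and Ruzsa, who obtained it implicitly in \cite{BaRu15} via an elementary counting argument in the spirit of Erd\H{o}s and S\'ark\"ozy (their stated Theorem~8 is the weaker version, without a single sequence $(N_j)$ along which the correlations converge for all $k$ simultaneously). Your route is genuinely different: it is in effect the spectral proof of Khintchine's recurrence theorem, transplanted to the density setting by working directly with the correlation sequence $\gamma$ along a diagonally extracted sequence. The delicate points all check out: the diagonal extraction preserves $\frac{1}{N_j+1}\operatorname{card}(A\cap[0,N_j])\to\delta$ while making every $\gamma(k)$ an honest limit, so the set in the statement is exactly $\{k:\gamma(k)>\delta^2-\varepsilon\}$; the shift errors in the positive-definiteness computation are indeed $O(1/N_j)$ for fixed finitely supported $(c_k)$, and $\gamma$ is real and even, hence Hermitian, so Herglotz applies with $\nu(\T)=\gamma(0)=\delta$; the Fej\'er means $\frac{1}{K^2}\sum_{|d|<K}(K-|d|)\gamma(d)$ tend to $\nu(\{0\})$ by dominated convergence (the kernel is bounded by $1$ and tends pointwise to $\mathbf{1}_{\{0\}}$), while for each fixed $K$ the Cauchy--Schwarz bound gives at least $\delta^2$ after the $O(K^2)$ boundary terms are absorbed in the limit $j\to\infty$ --- and you are right that the triangular weights are essential for the sharp constant. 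In the last step, note that the passage from the window average $\frac1L\sum_{k=a}^{a+L-1}\widehat{\nu'}(k)>-\varepsilon$ to a single $k$ with $\widehat{\nu'}(k)>-\varepsilon$ uses that $\widehat{\nu'}(k)=\gamma(k)-\nu(\{0\})$ is real; this holds here, but is worth stating since $\nu'$ itself need not be symmetric. The choice of $\eta_0$ with $\nu'(\{|\theta|<\eta_0\})<\varepsilon/2$ is legitimate by continuity from above of the finite measure $\nu'$, since $\nu'(\{0\})=0$, and the resulting bounded-gap property (every length-$L$ window meets $\{k:\gamma(k)>\nu(\{0\})-\varepsilon\}\subset\{k:\gamma(k)>\delta^2-\varepsilon\}$) is exactly syndeticity as the paper defines it. Comparing the approaches: the combinatorial argument of \cite{BaRu15} is self-contained and avoids measure theory, but the strengthened form needed here (one $(N_j)$ for all $k$, with actual limits) must be extracted from inside their proof; your Herglotz argument delivers that strengthening for free through the diagonal extraction, makes the constant transparent ($\nu(\{0\})\ge\delta^2$ is spectral mass at the trivial character), and generalizes readily, at the modest price of invoking Herglotz's theorem and Fej\'er summation.
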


\begin{proof}[Proof of Lemma \ref{lemma1}]
Let $(\alpha _n)_n$ and $A\subset \N_0$ be as in the statement of the lemma, with $\delta:=\overline{\text{dens}}(A)>0$, and fix $0<\varepsilon <\delta ^2$. By Theorem \ref{thm-erdos} there exists a strictly increasing sequence $(N_j)_j$ of positive integers such that the set
\[
F:=\Big\{k\in \N_0:\quad \lim_{j\to\infty}\frac{\text{{card}}\{n\leq N_j:n\in A\cap(A-k)\}}{N_j+1}>\delta^2-\varepsilon\Big\}
\]
is syndetic. For $N, M\geq 0$ we set
\[
y_{N,M}=\frac{1}{N+1}\sum_{\substack{n\in A\\n\leq N}}\sum _{\substack{m\in A\\n\leq m\leq n+M}}\alpha _{m-n}f_{m-n}.
\]
Reordering the (finite) sum, we get
\begin{equation}\label{eq1}
y_{N,M} = \frac{1}{N+1} \sum_{k=0}^M\sum _{\substack{n\in A,\,n\in A-k\\n\leq N}}\alpha_{k}f_{k} =  \sum_{k=0}^M\frac{\text{card}\{n\leq N:n\in A\cap(A-k)\}}{N+1}\alpha _{k}f_{k}.
\end{equation}

Since $(f_n)_n$ is an unconditional basis there exists a constant $C>0$ such that, whenever $x=\sum_{n\geq 0} x_n f_n \in X$ and $b=(b_n)_n$ is a bounded sequence of scalars, then $\sum _{n\geq 0} b_nx_nf_n\in X$ and
\begin{equation}\label{eq2}
\Big\Vert \sum _{n\geq 0} b_nx_nf_n\Big\Vert\leq C \|b\|_\infty\Big\Vert \sum _{n\geq 0} x_nf_n\Big\Vert,
\end{equation}
see \cite[Proposition 3.1.3]{AlKa06}. Taking for $b$ a suitable $0$-$1$-sequence we obtain from \eqref{eq1} and \eqref{eq2} that
\begin{equation}\label{eq3}
\|y_{N,M}\| \geq \frac{1}{C}\Big\| \sum_{k\in F,\,k\leq M}\frac{\text{card}\{n\leq N:n\in A\cap(A-k)\}}{N+1}\alpha _{k}f_{k}\Big\|.
\end{equation}

By definition of $F$ and another application of unconditionality we have that, for any $M\geq 0$ and for all sufficiently large $j$,
\begin{align}\label{eq4}
\Big\Vert \sum_{k\in F,\,k\leq M}\frac{\text{card}\{n\leq N_j:n\in A\cap(A-k)\}}{N_j+1}\alpha _{k}f_{k}\Big\Vert \geq \frac{\delta^2-\varepsilon}{C}\Big\Vert \sum_{k\in F,\,k\leq M}\alpha _{k}f_{k}\Big\Vert.
\end{align}

It then follows from \eqref{eq3} and \eqref{eq4} that for any $M\geq 0$ and all sufficiently large $j$, 
\[
\Big\Vert \sum_{k\in F,\,k\leq M}\alpha _{k}f_{k}\Big\Vert\leq \frac{C^2}{\delta^2-\varepsilon} \Vert y_{N_j,M}\Vert.
\]
The hypothesis now implies that
\[
\Big(\sum_{k\in F,\,k\leq M}\alpha _{k}f_{k}\Big)_{M\geq R}
\]
is bounded in $X$.
\end{proof}

\begin{remark}\label{rem1}
(a) Obviously, the hypothesis of the lemma holds in particular if the family
\[
\Big(\sum _{\substack{m\in A\\n\leq m\leq n+M}}\alpha _{m-n}f_{m-n}\Big)_{n\in A,M\geq 0}
\]
is bounded in $X$. This is the condition that will be satisfied in our application of the lemma.

(b) Using essentially the same proof one can show that the lemma also holds for Fr\'echet spaces. However, in that setting the hypothesis is too strong for our intended application, see the proof of Theorem \ref{thm-Frechet}. 
\end{remark}

We will apply Lemma \ref{lemma1} to the unit sequence $(e_n)_n$ in a Banach sequence space $X$ and to a sequence $(\alpha_n)_n$ that arises in a natural way from the weights of a weighted shift $B_w$. In fact, as is usual in this context, we will define a sequence $v=(v_n)_{n\geq 0}$ by
\begin{equation}\label{eq_v}
v_n=\frac{1}{w_1\cdots w_n},\ n\geq 0;
\end{equation}
note that $v_0=1$. Conversely, given a sequence $v=(v_n)_{n\geq 0}$ of non-zero scalars with $v_0=1$, we recover the sequence $w$ by setting
\begin{equation}\label{eq_w}
w_n = \frac{v_{n-1}}{v_n},\quad n\geq 1.
\end{equation}

\begin{lemma}\label{lemma2}
Let $X$ be a Banach sequence space for which $(e_n)_n$ is an unconditional basis. Let $B_w$ be a weighted shift on $X$, and let $v$ be the sequence associated to $w$ by \eqref{eq_v}.
If $A\subset\N_0$ is a subset of positive upper density such that, for some $R>0$, the family
\[
\Big(\frac{1}{N+1}\sum_{\substack{n\in A\\n\leq N}}\sum _{\substack{m\in A\\n\leq m\leq n+M}}v_{m-n}e_{m-n}\Big)_{M,N\geq R}
\]
is bounded in $X$, then the sequence
\[
\Big(\sum_{n\leq N} v_n e_n\Big)_{N\geq 0}
\]
is bounded in $X$. 
\end{lemma}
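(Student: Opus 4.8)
The plan is to deduce Lemma~\ref{lemma2} from Lemma~\ref{lemma1} and then to ``fill in the gaps'' of the syndetic set produced by Lemma~\ref{lemma1} using powers of $B_w$. Since the hypothesis of Lemma~\ref{lemma2} is exactly the hypothesis of Lemma~\ref{lemma1} applied to the unconditional basis $(e_n)_n$ and the scalars $\alpha_n=v_n$, Lemma~\ref{lemma1} provides a syndetic set $F\subseteq\N_0$ — say with the gap between consecutive elements at most $g$ — such that $M_F:=\sup_{N\ge 0}\bigl\|\sum_{n\in F,\,n\le N}v_ne_n\bigr\|<\infty$.

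The key algebraic observation is that, because $v_n=1/(w_1\cdots w_n)$, one has $B_w^ke_n=w_n\cdots w_{n-k+1}e_{n-k}$ for $n\ge k$, and therefore
\[
B_w^k(v_ne_n)=v_{n-k}e_{n-k}\ \ (n\ge k),\qquad B_w^k(v_ne_n)=0\ \ (n<k).
\]
Consequently, for every $k\ge 0$ and $N\ge 0$,
\[
B_w^k\Bigl(\sum_{n\in F,\,n\le N+k}v_ne_n\Bigr)=\sum_{m\in (F-k)\cap[0,N]}v_me_m,\qquad F-k:=\{n-k:n\in F,\ n\ge k\}.
\]
Since $B_w$ is a continuous operator on $X$, this shows that for each fixed $k$ the partial sums $\bigl(\sum_{m\in(F-k)\cap[0,N]}v_me_m\bigr)_{N\ge0}$ are bounded in $X$, with bound $\|B_w^k\|\,M_F$.

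To conclude, I would use that $F$ is syndetic with gap at most $g$: then every integer $m\ge\min F$ lies in $F-k$ for some $k\in\{0,1,\dots,g\}$, so that $\N_0\cap[\min F,\infty)\subseteq\bigcup_{k=0}^g(F-k)$. Writing $[\min F,N]\cap\N_0$ as a disjoint union of sets $T_k\subseteq(F-k)\cap[0,N]$, $0\le k\le g$, and applying the unconditional-basis estimate \eqref{eq2} with $0$-$1$ sequences (exactly as in the proof of Lemma~\ref{lemma1}) to pass from each $\sum_{m\in(F-k)\cap[0,N]}v_me_m$ to $\sum_{m\in T_k}v_me_m$, we get
\[
\Bigl\|\sum_{\min F\le n\le N}v_ne_n\Bigr\|\le C\sum_{k=0}^g\|B_w^k\|\,M_F,
\]
and adding the fixed finite tail $\sum_{n<\min F}v_ne_n$ yields a bound on $\bigl\|\sum_{n\le N}v_ne_n\bigr\|$ that is independent of $N$, as required.

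The only ingredient here beyond Lemma~\ref{lemma1} is the passage from boundedness along the syndetic set $F$ to boundedness along all of $\N_0$, and this is also the step I expect to require the most care. It is precisely at this point that one uses the standing assumption that $B_w$ maps $X$ into itself — so that each iterate $B_w^k$ is continuous — together with the elementary covering fact that finitely many translates of a syndetic set fill up a half-line; unconditionality of $(e_n)_n$ is then what allows the finitely many translated partial sums to be recombined into the partial sums of $\sum_n v_ne_n$.
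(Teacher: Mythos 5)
Your proposal is correct and follows essentially the same route as the paper: apply Lemma~\ref{lemma1} to get boundedness along a syndetic set $F$, then use the identity $B_w^k(v_ne_n)=v_{n-k}e_{n-k}$ together with continuity of the iterates $B_w^k$ and the covering $\N_0\subseteq\bigcup_{k\le K}(F-k)$ to fill the gaps, finishing with unconditionality. The only (immaterial) difference is bookkeeping: the paper sums $\sum_{k=0}^K B_w^k$ of a single partial sum and divides out the resulting bounded multiplicities $m_{n,N}$ by unconditionality, whereas you restrict each translate to a disjoint piece $T_k$ and add by the triangle inequality.
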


\begin{proof} We know from Lemma \ref{lemma1} that
\[
\Big(\sum_{n\in F,\,n\leq N} v_n e_n\Big)_{N\geq 0}
\]
is bounded in $X$ for some syndetic set $F\subset\N_0$. By the definition of $B_w$ we have that
\[
B_w\Big(\sum_{n\in F,\,0\leq n\leq N} v_n e_n\Big) = \sum_{n\in F,\,0\leq n\leq N}v_n B_w e_n = \sum_{n\in F,\,1\leq n\leq N}v_n w_n e_{n-1} =\sum_{n\in F,\,1\leq n\leq N} v_{n-1} e_{n-1}.
\]
Now, since $F$ is syndetic there is some $K\geq 0$ such that
\[
\N_0=\bigcup_{k\leq K}  (F-k),
\]
which implies that
\[ 
\sum_{k=0}^K B_w^k \Big(\sum_{n\in F,\,0\leq n\leq N} v_n e_n\Big) = \sum_{0\leq n\leq F_N} m_{n,N}v_n e_n
\]
with certain integers $ m_{n,N}\geq 1$, where $F_N=\max\{n\in F: n\leq N\}$, with $\max\varnothing =-1$. 

The result now follows by the continuity of $B_w$, the unconditionality of the basis and the fact that $F_N\to \infty$ as $N\to\infty$.
\end{proof}

\begin{remark}\label{rem-gaps} Instead of filling the gaps of $F$ via a backward shift, as done in the proof of the lemma, one may also fill them by a forward shift. Let $w=(w_n)_{n\geq 0}$ be a sequence of non-zero scalars. The corresponding forward shift $F_w$ is defined by $F_{w}x=\sum_{n=1}^\infty w_{n-1}x_{n-1}e_{n}$. Then the preceding lemma also holds if $F_w$ is an operator on $X$ and $v$ satisfies $w_n=\frac{v_{n+1}}{v_n}$, $n\geq 0$.
\end{remark}

We are now ready to prove Theorem \ref{thm-Banach}.

\begin{proof}[Proof of Theorem \ref{thm-Banach}]
As mentioned before, it suffices to show that (i) implies (iv).

Let $x=(x_n)_n \in X$ be a $\UU$-frequently hypercyclic vector for $B_{w}$. Let $C$ be a constant due to unconditionality appearing in \eqref{eq2}. Then the set
\[
A:=\Big\{n\in \N_0 :\Vert B_{w}^nx-e_0\Vert \leq \frac{\Vert e_0\Vert}{2C}\Big\}
\]
has positive upper density. Since
\begin{equation}\label{def-bw}
B_{w}^nx=\sum _{k\geq 0}w_{k+1}\cdots w_{k+n}x_{k+n}e_k= \sum _{k\geq 0}\frac{v_k}{v_{k+n}}x_{k+n}e_k,
\end{equation}
where  $v$ is the sequence associated to $w$ by \eqref{eq_v}, we have that for every $n\in A$
\[
\Big\Vert \frac{1}{v_n}x_{n}e_0-e_0 \Big\Vert\leq C\Vert B_{w}^nx-e_0\Vert \leq \frac{\Vert e_0\Vert}{2}.
\]
Hence
\begin{equation}\label{eq2-thm-Banach}
\Big|\frac{x_n}{v_n}\Big|\geq 1/2
\end{equation}
for every $n\in A$.

By \eqref{def-bw} and \eqref{eq2-thm-Banach}, we note that for every $n\in A$ and every integer $M\geq 0$,
\begin{align*}
\Big\Vert\sum_{\substack{m\in A\\n\leq m\leq n+M}}v_{m-n}e_{m-n}\Big\Vert &\leq 2 C\Big\Vert\sum_{\substack{m\in A\\n\leq m\leq n+M}}v_{m-n}\frac{x_m}{v_m}e_{m-n}\Big\Vert= 2 C\Big\Vert\sum_{\substack{0\leq k\leq M\\k+n\in A}}v_{k}\frac{x_{k+n}}{v_{k+n}}e_{k}\Big\Vert\\
&\leq 2C^2 \Vert B_w^nx\Vert \leq  2C^2 (\Vert B_w^nx-e_0\Vert+\Vert e_0\Vert)\\
&\leq C(1+2C)\Vert e_0\Vert.
\end{align*}
Applying now Lemma \ref{lemma2}, taking account of Remark \ref{rem1}(a),
we obtain that 
\[
\Big(\sum_{n\leq N}\frac{1}{w_{1}\cdots w_{n}}e_{n}\Big)_{N\geq 0}=\Big(\sum_{n\leq N}v_ne_{n}\Big)_{N\geq 0}
\]
is bounded in $X$. Since the basis $(e_n)_n$ is boundedly complete we can finally deduce (iv).
\end{proof}

\begin{remark}\label{rem-genufhc}
Since the proof uses bounded completeness of the basis only in the last step, we have also proved the following. Let $X$ be a Banach sequence space for which $(e_n)_n$ is an unconditional basis. If a weighted shift $B_w$ on $X$ is $\UU$-frequently hypercyclic then $(\sum_{n\leq N}\frac{1}{w_{1}\cdots w_{n}}e_{n})_{N\geq 0}$ is bounded in $X$.
\end{remark}

Can one go beyond bounded completeness? There is an indication that this might be difficult: it is known that if $(e_n)_n$ is an unconditional basis in a Banach sequence space $X$, then it fails to be boundedly complete if and only if $X$ contains a copy of $c_0$ (see \cite[Theorem 3.3.2]{AlKa06} for the precise statement), and we know from Bayart and Grivaux \cite{BaGr07} that Theorem \ref{thm-Banach} fails on $c_0$. But it is not clear how to exploit these facts.

\begin{question}\label{q-bddcompl}
Is there a Banach sequence space $X$ for which $(e_n)_n$ is a non-boundedly complete unconditional basis so that every frequently hypercyclic weighted shift on $X$ is chaotic?
\end{question}

\subsection{The Bayart-Ruzsa theorem on Fr\'echet sequence spaces}\label{subsec-Frechet}

Let us now turn to Fr\'echet sequence spaces in which $(e_n)_n$ is an unconditional basis. We will see that in this setting bounded completeness of the basis no longer suffices to have all frequently hypercyclic weighted shifts chaotic; see the summary at the end of the paper. The reason is that with any application of the continuity of the weighted shift we potentially lose quality of the seminorm. To be more precise, let us fix an increasing sequence $(\|\cdot\|_m)_{m\geq 1}$ of seminorms defining the topology of $X$. Then continuity of an iterate $B_w^n$, $n\geq 1$, of $B_w$ means that, for any $m\geq 1$, $\|B_w^nx\|_m$ is majorized by a multiple of $\|x\|_q$ but where $q$ may depend on $n$ and $m$; this makes our Banach space proof break down. It turns out that it suffices to impose the property of topologizability on the weighted shifts, see \cite{Bon07}:\\

(T) Any weighted shift $B_w$ on $X$ is topologizable, that is, for any $m\geq 1$ there is some $q(m)\geq 1$ such that for any $n\geq 0$ there is some constant $C_{m,n}>0$ such that, for any $x\in X$,
\begin{equation*}
\|B_w^nx\|_m\leq C_{m,n} \|x\|_{q(m)}.
\end{equation*}
~\medskip

The property (T) is a condition on the space $X$, and it is independent of the particular sequence $(\|\cdot\|_m)_{m\geq 1}$ chosen. We then have the following generalization of Theorem \ref{thm-Banach}.

\begin{theorem}\label{thm-Frechet}
Let $X$ be a Fr\'echet sequence space that satisfies \emph{(T)} and for which $(e_n)_n$ is a boundedly complete unconditional basis. Let $B_w$ be a weighted shift on $X$. Then the following assertions are equivalent:
\begin{enumerate}
\item[\rm (i)] $B_{w}$ is $\UU$-frequently hypercyclic on $X$;
\item[\rm (ii)] $B_{w}$ is frequently hypercyclic on $X$;
\item[\rm (iii)] $B_{w}$ is chaotic on $X$;
\item[\rm (iv)] the series $\sum _{n\geq 0}\frac{1}{w_1\cdots w_n}e_n$ is convergent in $X$. 
\end{enumerate}
\end{theorem}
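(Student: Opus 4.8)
The plan is to mimic the proof of Theorem \ref{thm-Banach} as closely as possible, using property (T) to compensate for the loss of seminorm quality that occurs when applying the continuity of $B_w$. As before, the implications (iv)$\Longrightarrow$(iii)$\Longrightarrow$(ii)$\Longrightarrow$(i) are known (the first holds in any Fréchet sequence space with $(e_n)_n$ an unconditional basis, via the characterization of chaos from \cite{Gro00} together with the Frequent Hypercyclicity Criterion; the last is trivial), so the whole task is to prove (i)$\Longrightarrow$(iv). Fix an increasing sequence $(\|\cdot\|_m)_{m\geq 1}$ of seminorms defining the topology, and for each $m$ let $C_m$ be a constant witnessing the unconditionality of $(e_n)_n$ with respect to $\|\cdot\|_m$, i.e.\ the analogue of \eqref{eq2} at level $m$. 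Let $x=(x_n)_n$ be a $\UU$-frequently hypercyclic vector.

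Next I would set up, for each fixed $m\geq 1$, a version of the estimate from the proof of Theorem \ref{thm-Banach}. The set
\[
A_m:=\Big\{n\in\N_0: \|B_w^n x - e_0\|_m \leq \frac{\|e_0\|_m}{2C_m}\Big\}
\]
has positive upper density (it contains the set of $\UU$-frequent visits to a suitable neighbourhood of $e_0$), and from \eqref{def-bw} one gets $|x_n/v_n|\geq 1/2$ for $n\in A_m$ whenever $\|e_0\|_m>0$; here $v$ is associated to $w$ by \eqref{eq_v}. (One should first observe that $\|e_0\|_m>0$ for all large $m$, since $e_0\neq 0$ in the Fréchet \emph{sequence} space $X$; passing to such $m$ is harmless.) Then, exactly as in the Banach proof, for $n\in A_m$ and $M\geq 0$,
\[
\Big\|\sum_{\substack{k'\in A_m\\ n\leq k'\leq n+M}} v_{k'-n}e_{k'-n}\Big\|_m \leq 2C_m^2 \|B_w^n x\|_m \leq 2C_m^2\big(\|B_w^nx-e_0\|_m+\|e_0\|_m\big)\leq C_m(1+2C_m)\|e_0\|_m.
\]
So the hypothesis of Lemma \ref{lemma1} (in its Fréchet version, Remark \ref{rem1}(b)) is met at level $m$ with $A=A_m$: there is a syndetic $F_m$ with $\big(\sum_{n\in F_m,\,n\leq N} v_n e_n\big)_N$ bounded in $\|\cdot\|_m$.

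Now comes the step where (T) is essential and where the argument genuinely departs from the Banach case. In the Banach setting Lemma \ref{lemma2} fills the gaps of $F$ by applying finitely many powers $B_w^k$, $k\leq K$, and uses continuity of $B_w$ to conclude. In the Fréchet setting the same filling works, but continuity only controls $\|B_w^k\,\cdot\,\|_m$ by a multiple of $\|\cdot\|_{q}$ with $q=q(k,m)$ possibly growing with $k$; property (T) is precisely what pins $q=q(m)$ independent of $k$, so that $\big(\sum_{n\leq N} m_{n,N} v_n e_n\big)_N$ — hence, by unconditionality at level $q(m)$, $\big(\sum_{n\leq N} v_n e_n\big)_N$ — is bounded in $\|\cdot\|_m$. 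A technical wrinkle is that $F_m$, and thus the gap bound $K=K_m$ and the level $q(m)$, depend on $m$; but that is fine, since to establish (iv) we only need $\big(\sum_{n\leq N} v_n e_n\big)_N$ to be bounded in \emph{each} seminorm $\|\cdot\|_m$ separately. Having shown that, bounded completeness of $(e_n)_n$ gives convergence of $\sum_{n\geq 0} v_n e_n = \sum_{n\geq 0}\frac{1}{w_1\cdots w_n}e_n$ in $X$, which is (iv).

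The main obstacle, as indicated, is exactly the gap-filling step: one must carry through Lemma \ref{lemma2} in a form where, for each target seminorm $\|\cdot\|_m$, a \emph{single} source seminorm $\|\cdot\|_{q(m)}$ controls all the finitely many shifts $B_w^k$ needed to fill the gaps of $F_m$, and this is what property (T) buys. A secondary point to handle with care is the dependence of all data ($A_m$, $F_m$, $K_m$, $q(m)$) on $m$ and the reassurance that this dependence does not obstruct the conclusion, since boundedness of $\big(\sum_{n\leq N} v_n e_n\big)_N$ is verified one seminorm at a time. Everything else — the density argument producing $A_m$, the lower bound $|x_n/v_n|\geq 1/2$, the telescoping identity for $B_w$ on the unit vectors, and the final appeal to bounded completeness — is routine adaptation of the Banach proof and of Lemmas \ref{lemma1} and \ref{lemma2}.
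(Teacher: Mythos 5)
Your overall strategy is the paper's: prove (i)$\Rightarrow$(iv) one seminorm at a time, run the Lemma \ref{lemma1} argument at a fixed level to produce a syndetic set, fill its gaps using property (T), and finish with bounded completeness. But there is one concrete miswiring of the seminorm levels that makes the gap-filling step fail as written. You obtain a syndetic set $F_m$ with $\bigl(\sum_{n\in F_m,\,n\leq N}v_ne_n\bigr)_{N}$ bounded with respect to $\|\cdot\|_m$, and then apply $\sum_{k\leq K_m}B_w^k$ to fill the gaps, invoking (T). But (T) gives $\|B_w^ky\|_m\leq C_{m,k}\|y\|_{q(m)}$: it \emph{consumes} the seminorm $\|\cdot\|_{q(m)}$, which is in general strictly stronger than $\|\cdot\|_m$, and you have not established boundedness of the $F_m$-partial sums at level $q(m)$. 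The repair is to chain the levels backwards from the target $m$, as the paper does: the syndetic set must be produced with partial sums bounded in $\|\cdot\|_{q(p(m))}$ (the extra $p(\cdot)$ accounting for the unconditionality step that strips the multiplicities $m_{n,N}$ after the gap-filling), which in turn forces the family $y_{N,M}$ to be bounded at level $p(p(q(p(m))))$, and hence the set $A$ to be defined via $\|B_w^nx-e_0\|_t$ for a still higher $t$. This is exactly the cascade $u=q(p(m))$, $r=p(p(u))$, $t=\max(p(s),p(p(r)))$ in the paper's proof. Your key observation --- that all the data ($A$, $F$, $K$, the levels) may depend on $m$ because boundedness in $X$ is checked one seminorm at a time --- is correct and is what makes the argument work; only the order in which the levels are chosen needs fixing.

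A secondary imprecision: you use the unconditionality estimate \eqref{eq2} ``at level $m$'', i.e.\ with the same seminorm on both sides. In a Fr\'echet space the available estimate \eqref{eq5} a priori involves a jump from $\|\cdot\|_m$ to $\|\cdot\|_{p(m)}$; you should either track this index (as the paper does throughout) or justify that one may replace each seminorm by $\sup_{\|b\|_\infty\leq1}\bigl\|\sum_nb_nx_ne_n\bigr\|_m$ so that the basis becomes unconditional level by level. Everything else --- the positive upper density of $A$, the bound $|x_n/v_n|\geq1/2$, the reduction of (iv)$\Rightarrow$(iii)$\Rightarrow$(ii)$\Rightarrow$(i) to known results, and the final appeal to bounded completeness --- matches the paper's argument.
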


\begin{proof} First, the unconditionality of the basis implies that, whenever $x=\sum_{n\geq 0} x_n e_n \in X$ and $b=(b_n)_n$ is a bounded sequence of scalars, then $x=\sum_{n\geq 0} b_n x_n e_n \in X$; in addition, for any $m\geq 1$, there exist a constant $K_m>0$ and some $p(m)\geq 1$ such that, for all such $x$ and $b$,
\begin{equation}\label{eq5}
\Big\Vert \sum _{n\geq 0} b_nx_ne_n\Big\Vert_m\leq K_m\|b\|_\infty\Big\Vert \sum _{n\geq 0} x_ne_n\Big\Vert_{p(m)};
\end{equation}
see \cite[Proposition 3.1.3]{AlKa06} in the case of a Banach space, but the proof works as well for Fr\'echet spaces, using \cite[Theorem 3.3.9]{KaGu81}. 

It suffices again to prove that (i) implies (iv). Thus, let $x=(x_n)_n \in X$ be a $\UU$-frequently hypercyclic vector for $B_{w}$ and $m\ge 1$. We fix $s\geq 1$ such that $\|e_0\|_s>0$ and we let
\[u=q(p(m)),\quad r=p(p(u))\quad\text{and}\quad t=\max(p(s),p(p(r)))\]
where $q(\cdot)$ comes from property (T). Since $x$ is $\UU$-frequently hypercyclic and $\|e_0\|_s>0$, the set 
\[
A:=\Big\{n\in \N_0 :\Vert B_{w}^nx-e_0\Vert_t \leq \frac{\Vert e_0\Vert_s}{2K_s}\Big\}
\]
has positive upper density. By \eqref{eq5} we obtain for any $n\in A$
\[
\Big\Vert \frac{1}{v_n}x_{n}e_0-e_0\Big\Vert_s=\Vert w_{1}\cdots w_{n}x_{n}e_0-e_0\Vert_s\leq K_s \Vert B_{w}^nx-e_0\Vert_{p(s)} \leq K_s \Vert B_{w}^nx-e_0\Vert_{t} \leq \frac{\Vert e_0\Vert_s}{2},
\]
where $v$ is the sequence associated to $w$ by \eqref{eq_v}. We continue as in the proof of Theorem~\ref{thm-Banach}, obtaining first that $|\frac{x_n}{v_n}|\geq 1/2$ for any $n\in A$, and then for any integer $M\geq 0$,
\begin{align*}
\Big\Vert\sum_{\substack{m\in A\\n\leq m\leq n+M}}v_{m-n}e_{m-n}\Big\Vert_r &
\leq 2K_r K_{p(r)} \Vert B_w^nx\Vert_{p(p(r))} \leq  2K_r K_{p(r)} (\Vert B_w^nx-e_0\Vert_t+\Vert e_0\Vert_t)\\
&\leq 2K_r K_{p(r)}(\tfrac{\Vert e_0\Vert_s}{2K_s}+\Vert e_0\Vert_t).
\end{align*}
This shows that the family
\[
\Big(\sum_{\substack{m\in A\\n\leq m\leq n+M}}v_{m-n}e_{m-n}\Big)_{n\in A, M\geq 0}
\]
is bounded with respect to the seminorm $\|\cdot\|_r$; note however that $A$ depends on $r$. Writing for $N,M\geq 0$
\[
y_{N,M} = \frac{1}{N+1}\sum_{\substack{n\in A\\n\leq N}}\sum _{\substack{m\in A\\n\leq m\leq n+M}}v_{m-n}e_{m-n},
\]
we obtain that also the family $(y_{N,M})_{N,M\geq 0}$ is bounded with respect to the seminorm $\|\cdot\|_r$.

Let $\delta:=\overline{\text{dens}}(A)$ and $0<\varepsilon<\delta^2$. Then we obtain exactly as in the proof of Lemma \ref{lemma1} by a double application of unconditionality that there is a syndetic set $F\subset \N_0$ so that
\[
C:=\sup_{M\geq 0}\Big\Vert \sum_{k\in F,\,k\leq M}v_ke_{k}\Big\Vert_u\leq \sup_{j\geq 1,M\geq 0}\frac{K_uK_{p(u)}}{\delta^2-\varepsilon} \Vert y_{N_j,M}\Vert_{p(p(u))}=\sup_{j\geq 1,M\geq 0}\frac{K_uK_{p(u)}}{\delta^2-\varepsilon} \Vert y_{N_j,M}\Vert_{r}<\infty.
\]
In particular, there is some $K\geq 0$ such that 
\[
\N_0=\bigcup_{k\leq K} (F-k)
\]
and by property (T) there is a constant $\tilde{C}>0$ such that, for any $x\in X$ and $0\leq n\leq K$,
\[
\|B_w^nx\|_{p(m)}\leq \tilde{C}\|x\|_{q(p(m))}=\tilde{C}\|x\|_{u}.
\]
 
Following now exactly the proof of Lemma \ref{lemma2} we obtain, together with an application of unconditionality, that for any $N\geq 0$
\[
\Big\|\sum_{0\leq n\leq F_N} v_ne_{n}\Big\|_m \leq K_m(K+1)\tilde{C}C,
\]
where $F_N=\max\{n\in F: n\leq N\}$, with $\max\varnothing =-1$. 

Note that $m\geq 1$ is arbitrary here. Since $F_N\to\infty$ as $N\to\infty$, another application of unconditionality shows that the sequence 
\[
\Big(\sum_{0\leq n \leq N} \frac{1}{w_{1}\cdots w_{n}}e_{n}\Big)_{N\geq 0}=\Big(\sum_{0\leq n \leq N} v_ne_{n}\Big)_{N\geq 0}
\]
is bounded in $X$, so that the bounded completeness of the basis implies (iv).
\end{proof}

\begin{remark}\label{rem-T}
(a) Of course, one could just demand that condition (T) holds for a given weighted shift $B_w$ on $X$, and then we could conclude that, for this shift, frequent hypercyclicity implies chaos. But given the simplicity of the characterization of chaos, it would be much easier to show directly that $B_w$ is chaotic.

(b) Remark \ref{rem-genufhc} also applies to Fr\'echet sequence spaces.
\end{remark}

\section{Chaotic and frequently hypercyclic weighted shifts on K\"othe sequence spaces}\label{sec3}

In this section, we aim to apply Theorem \ref{thm-Frechet} to a particularly interesting class of Fr\'echet sequence spaces, the K\"othe sequence spaces, also called K\"othe echelon spaces, see \cite[Chapter 27]{MeVo97} or \cite{BiBo03}.

Let $A=(a_{m,n})_{m\geq 1,n\geq 0}$ be a matrix of strictly positive numbers such that, for all $m\geq 1$, $n\geq 0$,
\[
a_{m,n}\leq a_{m+1,n}.
\]
Such a matrix is called a \textit{K\"othe matrix}.

Let $1\leq p <\infty$. We recall that the \textit{K\"othe sequence space of order $p$} is defined as
\[
\lambda^p(A) = \Big\{ x=(x_n)_{n\geq 0} : \text{for all $m\geq 1$, } \sum_{n\geq 0} |x_n|^pa_{m,n}<\infty\Big\},
\] 
while the \textit{K\"othe sequence spaces of order $0$ and $\infty$} are given by
\[
c_0(A) = \Big\{ x=(x_n)_{n\geq 0} : \text{for all $m\geq 1$, } \lim_{n\to\infty}|x_n|a_{m,n}=0\Big\},
\] 
\[
\lambda^\infty(A) = \Big\{ x=(x_n)_{n\geq 0} : \text{for all $m\geq 1$, } \sup_{n\geq 0}|x_n|a_{m,n}<\infty\Big\}.
\] 
The topologies are respectively induced by the (semi-)norms
\[
\|x\|_m = \Big(\sum_{n\geq 0} |x_n|^pa_{m,n}\Big)^{1/p},\ m\geq 1,\quad \text{for $1\leq p<\infty$},
\]
\[
\|x\|_m = \sup_{n\geq 0} |x_n|a_{m,n},\ m\geq 1,\quad \text{for order $0$ and $\infty$}.
\]

We note that, for each of the spaces $\lambda^p(A)$, $1\leq p<\infty$, and $c_0(A)$, the sequence $(e_n)_n$ is an unconditional basis. On the other hand, the space $\lambda^\infty(A)$ only has $(e_n)_n$ as a basis if it coincides with $c_0(A)$ (which can happen, see Proposition \ref{caracc_0}); hence we will only study weighted shifts on K\"othe sequence spaces of finite order.

\begin{remark}\label{rem-KoBa}
It is easy to see that a K\"othe sequence space $X=\lambda^p(A)$, $1\leq p <\infty$, or $X=c_0(A)$ is normable and hence a Banach space if and only if 
\[
\exists\mu\geq 1,\, \forall m\geq 1\,:\,\sup_{n\geq 0}\frac{a_{m,n}}{a_{\mu,n}}<\infty.
\]
In the first case, $X = \{(x_n)_{n} :  \sum_{n\geq 0} a_{\mu,n}|x_n|^p<\infty\}$ is a weighted $\ell^p$-space, which implies that any frequently hypercyclic weighted shift on $X$ is chaotic. In the second case, $X$ is a weighted $c_0$-space, so that there is a frequently hypercyclic weighted shift on $X$ that is not chaotic.
\end{remark}

Now, in order to apply Theorem \ref{thm-Frechet} we need to ensure that the sequence $(e_n)_n$ is boundedly complete. For the Köthe sequence spaces of order $p\in [1,\infty)$, this is clearly always the case. For the simplest K\"othe sequence space of order $0$, $c_0$, or more generally, when $c_0(A)$ is a Banach space, then the sequence $(e_n)_n$ is not boundedly complete. But this is not so for all K\"othe sequence space of order $0$. Let us assume that $(e_n)_n$ is boundedly complete in  $c_0(A)$, and let $x=(x_n)_n$ be an arbitrary sequence in $\lambda ^{\infty}(A)$. Then the sequence $(\sum _{n=0}^Nx_ne_n)_{N\geq 0}$ is bounded in $c_0(A)$ and thus $x$ belongs to $c_0(A)$. Therefore $c_0(A)=\lambda ^{\infty}(A)$. It is just as easy to see that if $c_0(A)=\lambda ^{\infty}(A)$ then $(e_n)_n$ is boundedly complete in $c_0(A)$. 

Now, the identity $c_0(A)=\lambda ^{\infty}(A)$ can be characterized in terms of the entries of $A$, see \cite[Theorem 27.9]{MeVo97}. Thus we have the following.

\begin{proposition}\label{caracc_0}
Let $A$ be a K\"othe matrix. Then the following assertions are equivalent:
\begin{enumerate}
\item[\rm (i)] $(e_n)_n$ is a boundedly complete basis for $c_0(A)$;
\item[\rm (ii)] $c_0(A)=\lambda^{\infty}(A)$;
\item[\rm (iii)] $A$ satisfies the condition
\begin{equation}\label{BC}
\forall I\subset \N\text{ infinite},\, \forall m\geq 1,\,\exists\mu\geq 1:\,\inf_{n\in I}\frac{a_{m,n}}{a_{\mu,n}}=0.\tag{BC}
\end{equation}

\end{enumerate}
\end{proposition}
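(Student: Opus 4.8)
The plan is to prove Proposition \ref{caracc_0} by establishing the cycle (iii)$\Longrightarrow$(ii)$\Longrightarrow$(i)$\Longrightarrow$(iii), of which only the first implication requires real work, the other two having essentially been sketched in the paragraph preceding the statement. For the implication (i)$\Longrightarrow$(ii), suppose $(e_n)_n$ is boundedly complete in $c_0(A)$ and take any $x=(x_n)_n\in\lambda^\infty(A)$; then each partial sum $\sum_{n=0}^N x_n e_n$ has seminorms $\|\cdot\|_m$ bounded by $\sup_{n\ge 0}|x_n|a_{m,n}<\infty$ uniformly in $N$, so the sequence of partial sums is bounded in $c_0(A)$ and hence, by bounded completeness, converges in $c_0(A)$; its limit must be $x$ (coordinatewise), so $x\in c_0(A)$ and $\lambda^\infty(A)=c_0(A)$. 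Conversely, for (ii)$\Longrightarrow$(i), if $\lambda^\infty(A)=c_0(A)$ and $(\sum_{n=0}^N x_n e_n)_N$ is bounded in $c_0(A)$, then $\sup_N\sup_n|x_n|a_{m,n}<\infty$ for every $m$, i.e. $x\in\lambda^\infty(A)=c_0(A)$; standard arguments (dominated-type convergence, or simply that the tail seminorms of $x$ go to $0$) then show the partial sums converge to $x$ in $c_0(A)$, so the basis is boundedly complete.

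The substantive part is the equivalence (ii)$\Leftrightarrow$(iii), and here I would simply invoke the cited characterization: the identity $c_0(A)=\lambda^\infty(A)$ is equivalent to condition \eqref{BC} by \cite[Theorem 27.9]{MeVo97}. If one wants to reproduce the argument rather than cite it, the idea is as follows. Suppose \eqref{BC} fails: then there exist an infinite set $I\subset\N$ and an index $m$ such that for every $\mu\ge m$ we have $\inf_{n\in I}a_{m,n}/a_{\mu,n}>0$, which (after passing to a subsequence of $I$ and using monotonicity in $\mu$) lets us build a sequence $x$ supported on $I$ with $x_n a_{m,n}$ not tending to $0$ but $\sup_n|x_n|a_{\mu,n}<\infty$ for all $\mu$; such an $x$ lies in $\lambda^\infty(A)\setminus c_0(A)$, so the two spaces differ. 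Conversely, if \eqref{BC} holds and $x\in\lambda^\infty(A)$, then for each fixed $m$ one chooses $\mu$ with $\inf$ zero along any prescribed infinite subset where $|x_n|a_{m,n}$ stays bounded away from $0$ — but $\sup_n|x_n|a_{\mu,n}<\infty$ forces $a_{m,n}/a_{\mu,n}$ to be bounded below along that set, contradicting \eqref{BC} unless the set is finite; hence $|x_n|a_{m,n}\to 0$, i.e. $x\in c_0(A)$.

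The main obstacle, such as it is, lies entirely in the combinatorial manipulation underlying (ii)$\Leftrightarrow$(iii): one must be careful about the order of quantifiers in \eqref{BC} (the index $\mu$ is allowed to depend on both $I$ and $m$) and about passing to subsequences so that the infimum along $I$ is actually realized or approximated. Everything else — the two bounded-completeness implications — is a routine unwinding of definitions using the explicit form of the seminorms on $c_0(A)$ and $\lambda^\infty(A)$, together with the elementary fact that a coordinatewise-convergent bounded sequence of finitely supported vectors in $c_0(A)$ converges in the topology of $c_0(A)$ precisely to its coordinatewise limit. Since \cite[Theorem 27.9]{MeVo97} handles the delicate step, the cleanest writeup is to prove (i)$\Leftrightarrow$(ii) directly and cite the reference for (ii)$\Leftrightarrow$(iii).
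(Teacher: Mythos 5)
Your proposal is correct and follows essentially the same route as the paper: the authors likewise prove (i)$\Leftrightarrow$(ii) by the direct bounded-completeness argument on partial sums and then cite \cite[Theorem 27.9]{MeVo97} for the equivalence of $c_0(A)=\lambda^\infty(A)$ with \eqref{BC}. Your additional sketch of (ii)$\Leftrightarrow$(iii) is also sound, though the paper does not reproduce it.
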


We mention in passing that Chapter 27 of Meise and Vogt \cite{MeVo97} contains characterizations of various other interesting properties of K\"othe sequence spaces in terms of the K\"othe matrix~$A$. We single out the following result that will be used later; see \cite[Proposition 27.16]{MeVo97} and use that $\lambda^{p}(A)\subset\lambda^{q}(A)\subset c_0(A)\subset \lambda^\infty (A)$ if $1\leq p\leq q <\infty$.

\begin{proposition}\label{caracNuc}
Let $A$ be a K\"othe matrix. Then the following assertions are equivalent:
\begin{enumerate}
\item[\rm (i)] for some $p\in[1,\infty)$, $c_0(A)=\lambda^{p}(A)$;
\item[\rm (ii)] for some $p\in[1,\infty)$, $\lambda^\infty(A)=\lambda^{p}(A)$;
\item[\rm (iii)] for some  $p\neq q$ in $[1,\infty)$, $\lambda^p(A)=\lambda^{q}(A)$;
\item[\rm (iv)] for all  $p\in[1,\infty)$, $\lambda^p(A)=c_0(A)=\lambda^{\infty}(A)$;
\item[\rm (v)] $A$ satisfies the condition
\begin{equation}\label{N}
\forall m\geq 1,\,\exists\mu\geq 1:\,\sum_{n\geq 0}\frac{a_{m,n}}{a_{\mu,n}}<\infty.\tag{N}
\end{equation}
\end{enumerate}
\end{proposition}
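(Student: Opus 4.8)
The statement to prove is Proposition \ref{caracNuc}, characterizing when the Köthe sequence spaces of different orders coincide, in terms of the nuclearity-type condition \eqref{N}.

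\bigskip

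The plan is to reduce the entire proposition to the quoted result of Meise and Vogt \cite[Proposition 27.16]{MeVo97} together with the elementary inclusions $\lambda^p(A)\subset\lambda^q(A)\subset c_0(A)\subset\lambda^\infty(A)$ for $1\le p\le q<\infty$. First I would record these inclusions: if $1\le p\le q$ and $x\in\lambda^p(A)$, then for each $m$ the summability of $\sum_n|x_n|^pa_{m,n}$ forces $|x_n|^pa_{m,n}\to 0$, hence $|x_n|a_{m,n}^{1/q}$ is bounded (say by $1$ for large $n$), whence $|x_n|^qa_{m,n}=\big(|x_n|a_{m,n}^{1/q}\big)^q\le\big(|x_n|a_{m,n}^{1/p}\big)^p$ for large $n$, giving $x\in\lambda^q(A)$; a similar comparison gives $\lambda^q(A)\subset c_0(A)\subset\lambda^\infty(A)$. (One must be a little careful and instead compare $|x_n|^qa_{m,n}$ with $|x_n|^pa_{m,n}$ directly using that $|x_n|a_{m,n}^{1/p}\to 0$, so the ratio of exponents does the job for large $n$; the finitely many small $n$ are harmless.) Condition \eqref{N} is exactly Grothendieck--Pietsch-type nuclearity: the Meise--Vogt result states that $\lambda^\infty(A)$ is nuclear if and only if \eqref{N} holds, and a nuclear Köthe space satisfies $\lambda^\infty(A)=\lambda^p(A)$ for every $p$ (indeed the Grothendieck--Pietsch criterion says nuclearity is equivalent to \eqref{N} and also to $\lambda^1(A)=\lambda^2(A)=\lambda^\infty(A)$, etc.). So \eqref{N} $\Leftrightarrow$ (iv) is immediate from \cite[Proposition 27.16]{MeVo97}, and (iv) trivially implies (i), (ii) and (iii).

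\bigskip

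It remains to show that each of (i), (ii), (iii) implies \eqref{N}; by the chain of inclusions this is where the real content lies, though it is still short. For (ii): if $\lambda^\infty(A)=\lambda^p(A)$ for some $p$, then since these spaces carry their natural Fréchet topologies and the identity map is continuous (closed graph), for each $m\ge 1$ there are $\mu\ge 1$ and $C>0$ with $\|x\|_{m,p}\le C\|x\|_{\mu,\infty}$ for all $x$, i.e. $\sum_n|x_n|^pa_{m,n}\le C^p\sup_n|x_n|^pa_{\mu,n}$. Testing on finitely supported unimodular-type vectors $x=(t_n)$ with $t_n a_{\mu,n}^{1/p}\le 1$ optimally chosen — concretely $x_n=a_{\mu,n}^{-1/p}$ on a finite set and $0$ elsewhere — yields $\sum_{n\in\text{finite set}}a_{m,n}/a_{\mu,n}\le C^p$, and letting the finite set exhaust $\N_0$ gives $\sum_n a_{m,n}/a_{\mu,n}<\infty$, which is \eqref{N}. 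For (i), the inclusion $c_0(A)\subset\lambda^\infty(A)$ is automatic and $\lambda^p(A)\subset c_0(A)$ is automatic, so $c_0(A)=\lambda^p(A)$ forces $\lambda^p(A)=\lambda^\infty(A)$ a fortiori once we also know $\lambda^\infty(A)\subset\lambda^p(A)$: but that last inclusion is exactly what (i) must be leveraged for, so instead I would argue directly that $c_0(A)=\lambda^p(A)$ implies a continuous inclusion $c_0(A)\hookrightarrow\lambda^p(A)$ and run the same seminorm-estimate/test-vector argument as for (ii), with $\sup$ replaced by the $c_0$-norm (which equals the $\sup$-norm on finitely supported vectors), again landing on \eqref{N}. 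For (iii): if $\lambda^p(A)=\lambda^q(A)$ with, say, $p<q$, then by the inclusions this squeezes $\lambda^p(A)=\lambda^q(A)=c_0(A)=\lambda^\infty(A)$? — not quite, since $\lambda^\infty(A)$ could still be strictly larger; rather, a continuous inclusion $\lambda^q(A)\hookrightarrow\lambda^p(A)$ gives, for each $m$, some $\mu$ with $\sum_n|x_n|^pa_{m,n}\le C\big(\sum_n|x_n|^qa_{\mu,n}\big)^{p/q}$, and testing on $x_n=a_{\mu,n}^{-1/q}$ over finite sets $S$ yields $\sum_{n\in S}a_{m,n}a_{\mu,n}^{-p/q}\le C\,(\#S)^{p/q}$; this is not literally \eqref{N}, so here one invokes the classical Grothendieck--Pietsch equivalence (contained in \cite[27.16]{MeVo97} and surrounding material) that $\lambda^p(A)=\lambda^q(A)$ for one pair $p\ne q$ already forces nuclearity, i.e. \eqref{N}. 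Cleanly, I would simply cite that the equivalences (iii)$\Leftrightarrow$(v)$\Leftrightarrow$(iv) are part of the Grothendieck--Pietsch theorem as recorded in \cite[Proposition 27.16 and Chapter 27]{MeVo97}.

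\bigskip

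The main obstacle — really the only subtle point — is the implication (iii)$\Rightarrow$\eqref{N}, i.e. that equality of \emph{two} finite-order Köthe spaces already forces the full nuclearity condition: the direct test-vector computation gives a weaker inequality and one genuinely needs the Grothendieck--Pietsch characterization of nuclear Köthe spaces to bootstrap it. Everything else — the inclusions, and (i)$\Rightarrow$\eqref{N}, (ii)$\Rightarrow$\eqref{N} — is a routine closed-graph-plus-diagonal-test-vector argument. Accordingly, the cleanest write-up cites \cite[Proposition 27.16]{MeVo97} (and the Grothendieck--Pietsch criterion therein) for the equivalence of (iii), (iv) and (v), proves the easy inclusions by hand, deduces (i)$\Rightarrow$(ii)$\Rightarrow$(v) and (iv)$\Rightarrow$(i),(ii),(iii) from those inclusions, and closes the loop.
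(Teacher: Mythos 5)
Your overall strategy coincides with the paper's, which offers no proof beyond citing \cite[Proposition 27.16]{MeVo97} together with the chain of inclusions $\lambda^p(A)\subset\lambda^q(A)\subset c_0(A)\subset\lambda^\infty(A)$ for $1\leq p\leq q<\infty$; so the reduction itself is not in question. The genuine gap is in the one piece of that chain you try to prove by hand. From $|x_n|^pa_{m,n}\to 0$ you get $|x_n|a_{m,n}^{1/p}\leq 1$ for large $n$, but this does \emph{not} give $|x_n|a_{m,n}^{1/q}\leq 1$: since $1/q\leq 1/p$, one has $a_{m,n}^{1/q}\geq a_{m,n}^{1/p}$ precisely when $a_{m,n}\leq 1$, and for the same reason the inequality $(|x_n|a_{m,n}^{1/q})^q\leq(|x_n|a_{m,n}^{1/p})^p$ fails when $a_{m,n}<1$; your parenthetical repair meets the identical obstruction, because the ratio $|x_n|^{q-p}$ is only controlled by a \emph{negative} power of $a_{m,n}$. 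The inclusion is immediate in the Meise--Vogt normalization, where the $m$-th seminorm is $\bigl(\sum_n(|x_n|a_{m,n})^p\bigr)^{1/p}$, but with the seminorms used here, $\bigl(\sum_n|x_n|^pa_{m,n}\bigr)^{1/p}$, it genuinely fails: for $a_{m,n}=2^{nm-2^n}$ and $x_n=2^{2^{n-1}}$ one has $|x_n|a_{m,n}=2^{nm-2^{n-1}}\to 0$, so $x\in c_0(A)=\lambda^1(A)$, while $|x_n|^2a_{m,n}=2^{nm}$, so $x\notin\lambda^2(A)$ --- and this matrix satisfies \eqref{N} with $\mu=m+1$. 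So the step you wave through is exactly where the argument (and indeed the literal statement, in this normalization) breaks.

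A second, smaller defect: in the test-vector argument for (ii)$\Rightarrow$\eqref{N}, the vector $x_n=a_{\mu,n}^{-1/p}$ supported on a finite set does not have $\mu$-th sup-seminorm $\leq 1$ (that seminorm is $\sup_n a_{\mu,n}^{(p-1)/p}$); normalizing correctly with $x_n=a_{\mu,n}^{-1}$ yields $\sum_n a_{m,n}/a_{\mu,n}^{\,p}<\infty$, which is not condition \eqref{N} and is not equivalent to it in general (the same matrix is a witness). Both defects have a single source: the mismatch between this paper's definition of $\lambda^p(A)$ and the one in \cite{MeVo97}, where the weight sits inside the $p$-th power. If you restate everything with the Meise--Vogt seminorms, your inclusions become trivial, the test-vector computations close, and the proposition is the classical Grothendieck--Pietsch statement; as written, the hand-proved steps do not go through, and you should either switch normalizations explicitly or flag the discrepancy rather than paper over it.
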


Of course, \eqref{N} implies \eqref{BC}. Note that \eqref{N} also characterizes when $c_0(A)$ (or any $\lambda^p(A)$, $p\in [1,\infty]$) is nuclear, see \cite[Proposition 28.16]{MeVo97}.
 
It is now an easy matter to apply Theorem \ref{thm-Frechet} to (certain) K\"othe sequence spaces and thus obtain conditions under which any frequently hypercyclic weighted shift on such a space is chaotic. However, before stating this result in Subsection \ref{subsec-Koethe-pos}, we will observe a rather unexpected phenomenon: there are some K\"othe sequence spaces that do not support any hypercyclic weighted shift.

\subsection{Existence of (chaotic or frequently) hypercyclic weighted shifts on K\"othe sequence spaces}\label{sec-exist}

Let us pause briefly in order to reflect on the question we are trying to discuss. On the positive side we want to show that, on a given Fr\'echet sequence space, every frequently hypercyclic weighted shift is chaotic. Now it might be that the problem is meaningless because there might not be any frequently hypercyclic weighted shifts on the space. Indeed, for (frequent) hypercyclicity it helps if the weights are big, while the continuity of the weighted shift limits their growth. There is then a trade-off.

Even more, it is known that there are separable infinite-dimensional Banach spaces that do not support any chaotic or any frequently hypercyclic operator, see \cite{BMP01}, \cite{Shk09}. For shifts, we do have a characterization of frequently hypercyclic weighted shifts on an arbitrary Fr\'echet sequence space in which $(e_n)_n$ is an unconditional basis; see \cite[Theorem 6.2]{BoGr18}. However, the conditions are rather involved. So let us approach the problem from above (are there any weighted shifts? are there hypercyclic weighted shifts?) and from below (are there chaotic weighted shifts?).

\subsubsection{Existence of weighted shifts}
The existence of weighted shifts on (most) Banach or Fr\'echet sequence spaces is easily characterized.

\begin{proposition}\label{prop-existshift}
\emph{(a)} Any Banach sequence space that contains all finite sequences admits a weighted shift.

\emph{(b)} Let $X$ be a Fr\'echet sequence space in which $(e_n)_{n\geq 0}$ is a basis. Let $(\|\cdot\|_m)_m$ be an increasing sequence of seminorms that generates the topology of $X$. Then $X$ admits a weighted shift if and only if
\begin{equation}\label{eq6}
\forall m\geq 1,\, \exists \mu\geq 1,\, \forall n\geq 0:\, \|e_n\|_m \neq 0 \Longrightarrow \|e_{n+1}\|_{\mu}\neq 0.
\end{equation}
\end{proposition}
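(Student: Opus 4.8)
The plan is to treat the two parts separately, with part (b) being the conceptual heart and part (a) following as a near-immediate special case. For part (b), I would first unwind what it means for a weighted shift $B_w$ to "define an operator on $X$": since $X$ is a Fréchet sequence space with continuous coordinate functionals, the closed graph theorem (as already noted in the introduction) reduces this to the purely algebraic condition $B_w X \subseteq X$. Writing $v_n = 1/(w_1\cdots w_n)$ as in \eqref{eq_v}, the key observation is that $B_w e_n = w_n e_{n-1}$ for $n\ge 1$ and $B_w e_0 = 0$, so $B_w$ maps $X$ into itself precisely when for every $x=(x_n)_n\in X$ the sequence $(w_{n+1}x_{n+1})_{n\ge 0}$ again lies in $X$. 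The obstruction to this is exactly a coordinate $n$ where $\|e_n\|_m \ne 0$ for some $m$ (so that $x$ can have nonzero $n$-th coordinate "seen" by the topology) but $\|e_{n-1}\|_\mu = 0$ for all $\mu$ — in that situation one can build an $x\in X$ whose image under $B_w$ fails to be in $X$ regardless of the weights, because no choice of $w_n$ can compensate. Hence condition \eqref{eq6} (with the index shift $n\leftrightarrow n+1$) is necessary.

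For sufficiency in (b), assuming \eqref{eq6}, I would construct the weights explicitly. The idea is: whenever $\|e_{n+1}\|_\mu = 0$ for all $\mu$, the coordinate $x_{n+1}$ of any $x\in X$ is irrelevant to all seminorms and can in effect be set to $0$; so the weight $w_{n+1}$ can be chosen arbitrarily there (say $w_{n+1}=1$). Whenever $\|e_{n+1}\|_{m_0}\ne 0$ for some $m_0$, \eqref{eq6} guarantees that $\|e_n\|_\mu \ne 0$ for a cofinal set of $\mu$ as well, and I would then pick $w_{n+1}$ small enough — using the finitely many relevant seminorm comparisons up to level $n+1$ — so that $\|w_{n+1} x_{n+1} e_n\|_m \le 2^{-n}\|x\|_{\mu(m)}$ for each fixed $m$, with $\mu(m)$ increasing appropriately. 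Summing a telescoping/geometric series then shows $B_w x \in X$ for all $x\in X$: for each fixed $m$, $\|B_w x\|_m \le \sum_n \|w_{n+1}x_{n+1}e_n\|_m \le \|x\|_{\mu(m)}$, say. Care is needed because the seminorm witnessing $\|e_n\|_\mu\ne 0$ may depend on $n$, so the choice of the function $m\mapsto \mu(m)$ must be made uniformly; this is the point requiring the most attention, and it is where one genuinely uses that \eqref{eq6} is a "$\forall m\,\exists\mu\,\forall n$" statement rather than a weaker quantifier arrangement.

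For part (a), in a Banach sequence space $X$ containing all finite sequences, every $e_n$ has $\|e_n\| \ne 0$ (a nonzero finite sequence has positive norm since the coordinate functionals are continuous and nonzero), so \eqref{eq6} holds trivially with $\mu = m = 1$; part (b) — or rather its Banach specialization, which is the classical construction of a weighted shift on $X$ by choosing $w_{n+1} = \min(1, 1/(2^n\|e_n\|))$ or similar — then yields a weighted shift. Alternatively one gives the one-line direct argument: set $w_{n+1}$ so small that $\|w_{n+1}e_n\| \le 2^{-n}$; then for $x\in X$, $\|B_w x\| \le \sum_n |x_{n+1}|\,\|w_{n+1}e_n\|$, and since $|x_{n+1}| \le C\|x\|$ for a constant $C$ (continuity of coordinate functionals on a Banach space), the series converges and $B_w x\in X$. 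I expect the main obstacle throughout to be bookkeeping rather than any deep difficulty: precisely, arranging the index-dependence in the sufficiency direction of (b) so that a single function $\mu(\cdot)$ works simultaneously for all coordinates $n$, while correctly handling the "dead" coordinates where all seminorms vanish.
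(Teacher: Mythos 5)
Your sufficiency argument for part (b) follows the paper's route: the paper sets $a_n=\max_{m\le n}C_m\|e_n\|_m/\|e_{n+1}\|_{\mu_m}$ (with the convention $0/0=0$, which is legitimate precisely because of \eqref{eq6}) and takes any weights with $\sum_{n}|w_{n+1}|a_n<\infty$; your idea of using "only the finitely many seminorm comparisons up to level $n+1$" is exactly this $\max_{m\le n}$ device, and it does resolve the uniformity issue you flag. The one ingredient you should make explicit is that the bound $\|x_{n+1}e_{n+1}\|_{\mu_m}\le C_m\|x\|_{\nu_m}$ must hold with $C_m,\nu_m$ independent of $n$; this is supplied by the Banach--Steinhaus theorem applied to the basis projections, not merely by continuity of the individual coordinate functionals.

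The genuine problem is your necessity argument. The obstruction you describe --- $\|e_n\|_m\neq 0$ for some $m$ but $\|e_{n-1}\|_\mu=0$ for all $\mu$ --- is vacuous: since the seminorms generate the (Hausdorff) topology, $\|e_{n-1}\|_\mu=0$ for all $\mu$ would force $e_{n-1}=0$, impossible for a basis vector. You have also reversed the implication: \eqref{eq6} says that visibility of the \emph{target} coordinate $e_n$ at level $m$ forces visibility of the \emph{source} coordinate $e_{n+1}$ at a level $\mu$ uniform in $n$; its negation is "$\exists m\,\forall\mu\,\exists n$ (depending on $\mu$) with $\|e_n\|_m\neq 0$ and $\|e_{n+1}\|_\mu=0$". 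The correct one-line proof of necessity is to apply the continuity estimate $\|B_wx\|_m\le C\|x\|_\mu$ to $x=e_{n+1}$, giving $|w_{n+1}|\,\|e_n\|_m\le C\|e_{n+1}\|_\mu$ and hence the implication in \eqref{eq6} since $w_{n+1}\neq 0$. Two smaller slips: part (a) does not assume that $(e_n)_n$ is a basis, so you cannot reduce it to part (b); the paper instead uses that each projection $x\mapsto x_ne_n$ is continuous, which yields $|x_{n+1}|\le C_{n+1}\|x\|$ with a constant that in general must depend on $n$ (think of a weighted $\ell^1$-space with weights tending to $0$) --- your single constant $C$ is wrong, and one must choose $w_{n+1}$ small relative to $C_{n+1}\|e_n\|$ rather than to $\|e_n\|$ alone.
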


\begin{proof}
We first show sufficiency of \eqref{eq6} in (b). Let, for any $m\geq 1$, $\mu_m=:\mu\geq 1$ be given by this condition. Since $(e_n)_n$ is a basis, the Banach-Steinhaus theorem implies that there are $\nu_m\geq 1$ and $C_m>0$ such that, for all $x\in X$, $n\geq 0$ and $m\geq 1$,
\begin{equation}\label{eq7}
\|x_ne_n\|_{\mu_m}\leq C_m\|x\|_{\nu_m}.
\end{equation}
Let 
\[
a_n=\max_{m\leq n}  C_m\frac{\|e_n\|_m}{\|e_{n+1}\|_{\mu_m}},\quad n\geq 0;
\]
note that this is well-defined by \eqref{eq6} if we let $\frac{0}{0}=0$. 

Now let $w=(w_n)_{n\geq 1}$ be a sequence of non-zero scalars such that
\[
\sum_{n\geq 0} |w_{n+1}| a_n <\infty.
\]
Then we have that, for any $m\geq 1$ and $x\in X$,
\begin{align*}
\sum_{n\geq 0} \|w_{n+1}x_{n+1}e_n\|_m &\leq \sum_{n\geq 0} |w_{n+1}| \frac{\|e_n\|_m}{\|e_{n+1}\|_{\mu_m}}  \|x_{n+1}e_{n+1}\|_{\mu_m}\\
& \leq \Big(\sum_{n\geq 0} |w_{n+1}| C_m \frac{\|e_n\|_m}{\|e_{n+1}\|_{\mu_m}}\Big)  \|x\|_{\nu_m}\\
& \leq \Big(\sum_{n=0}^{m-1} |w_{n+1}| C_m \frac{\|e_n\|_m}{\|e_{n+1}\|_{\mu_m}}  + \sum_{n\geq m} |w_{n+1}| a_n\Big)  \|x\|_{\nu_m} <\infty,
\end{align*}
so that $B_w$ defines a weighted shift on $X$. 

The necessity of \eqref{eq6} follows directly from the continuity of any given weighted shift on $X$.

For (a) note that, in the Banach space case, \eqref{eq6} is automatic. Moreover, inequality \eqref{eq7} holds even without the basis assumption when one allows the constant $C_m$ to depend on $n$ since the projections $x\to x_ne_n$, $n\geq 1$, are continuous; the proof can then be continued as before.
\end{proof}

In particular, any Köthe sequence space of finite order admits a weighted shift.

\begin{example}\label{ex-FreEx}
We give here an example of a Fr\'echet sequence space in which $(e_n)_n$ is a basis but that admits no weighted shift. Indeed, let 
\[
X=\Big\{ (x_n)_n : \sum_{n\geq 0} |x_{2n}|<\infty\Big\}.
\]
This turns into a Fr\'echet sequence space under the seminorms
\[
\|x\|_m = \max_{0\leq n\leq m}|x_{2n+1}| + \sum_{n\geq 0} |x_{2n}|,\quad m\geq 1.
\]
Then $X$ does not admit any weighted shift because no weight can turn all sequences into $\ell^1$-sequences.
\end{example}

\subsubsection{Existence of hypercyclic weighted shifts}

For a general Banach or Fr\'echet sequence space, the existence of hypercyclic weighted shifts seems to be a complicated matter. Thus we restrict our attention to Köthe sequence spaces.

Note that a weighted shift $B_w$ defines an operator on a K\"othe sequence space $\lambda^p(A)$ or $c_0(A)$ if and only if 
\begin{equation*}
\forall m\geq 1,\,\exists\mu\geq 1,C>0,\,\forall n\geq 1:\,
\begin{cases}
|w_n|^p a_{m,n-1}\leq C a_{\mu,n}, &\text{on $\lambda^p(A)$},\\
|w_n| a_{m,n-1}\leq C a_{\mu,n}, &\text{on $c_0(A)$}.
\end{cases}
\end{equation*}
For later use we note here that, in terms of the associated sequence $v$ (see \eqref{eq_v} and \eqref{eq_w}), the conditions can be written as
\begin{equation}\label{eq8v}
\forall m\geq 1,\,\exists\mu\geq 1,C>0,\,\forall n\geq 1:\,
\begin{cases}
|v_{n-1}|^p a_{m,n-1}\leq C |v_{n}|^pa_{\mu,n}, &\text{on $\lambda^p(A)$},\\
|v_{n-1}| a_{m,n-1}\leq C |v_{n}|a_{\mu,n}, &\text{on $c_0(A)$}.
\end{cases}
\end{equation}

\begin{proposition}\label{prop-exhcko}
Let $X=\lambda^p(A)$, $1\leq p<\infty$, or $X=c_0(A)$ be a K\"othe sequence space. Then there exists a hypercyclic weighted shift on $X$ if and only if there exist a strictly increasing sequence $(m_k)_k$ of positive integers and a positive sequence $(C_k)_k$ such that
\[
\forall n\geq 1,\,\nu_n:=\inf_{k\geq 1} C_k \frac{a_{m_{k+1},n}}{a_{m_k,n-1}}>0
\]
and
\[  
\forall m\geq 1,\,\liminf_{N\to\infty} \frac{1}{\prod_{n=1}^{N}\nu_n}a_{m,N}=0.
\]
\end{proposition}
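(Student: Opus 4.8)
The plan is to convert the statement, via the known characterization of hypercyclic weighted shifts, into a purely arithmetic existence statement about a single positive sequence, and then to recognize $\prod_{n=1}^N\nu_n$ as the denominator $w_1\cdots w_N$ of the \emph{extremal} (slowest-growing) admissible weight sequence.

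First I would pass from $w$ to the sequence $d=(d_n)$ given by $d_n=|v_n|^p$ on $\lambda^p(A)$ and $d_n=|v_n|$ on $c_0(A)$, where $v$ is associated to $w$ by \eqref{eq_v}; then $d_0=1$, $d_n>0$, and conversely every positive sequence with $d_0=1$ comes from a unique such $w$. In these terms the operator condition \eqref{eq8v} becomes, uniformly in both cases,
\[
\forall m\geq 1,\ \exists \mu\geq 1,\ C>0,\ \forall n\geq 1:\quad \frac{d_{n-1}}{d_n}\leq C\,\frac{a_{\mu,n}}{a_{m,n-1}},
\]
while the characterization of hypercyclic weighted shifts (namely that $B_w$ is hypercyclic iff $v_{n_k}e_{n_k}\to 0$ for some strictly increasing $(n_k)$; see \cite{Gro00}) becomes: there is a strictly increasing sequence $(n_k)_k$ with $d_{n_k}a_{m,n_k}\to 0$ for every $m\geq 1$. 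It then suffices to characterize existence of a positive $d$ with $d_0=1$ satisfying both conditions.

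Next I would isolate the role of $\nu_n$. Since $A$ is increasing in its first index, the operator condition for all $m$ is equivalent to its validity along any cofinal sequence $m_1<m_2<\cdots$, and since enlarging $\mu$ only weakens it, one may take the witness at level $m_k$ to be $\mu=m_{k+1}$. Thus the operator condition is equivalent to the existence of $(m_k)_k$ and $(C_k)_k$ with
\[
\frac{d_{n-1}}{d_n}\leq C_k\,\frac{a_{m_{k+1},n}}{a_{m_k,n-1}}\ (\forall k,n)\quad\Longleftrightarrow\quad \frac{d_{n-1}}{d_n}\leq \nu_n:=\inf_{k\geq 1}C_k\,\frac{a_{m_{k+1},n}}{a_{m_k,n-1}}.
\]
In particular a positive solution $d$ forces $\nu_n>0$, and telescoping $\prod_{n=1}^N (d_{n-1}/d_n)=1/d_N$ gives $d_N\geq \prod_{n=1}^N\nu_n^{-1}$, with equality precisely for the extremal choice $d_n=\prod_{j=1}^n\nu_j^{-1}$. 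For sufficiency I would simply take this extremal sequence: by the displayed definition of $\nu_n$ it satisfies the operator condition at each level $m_k$, hence at all $m$, so $B_w$ is a genuine shift; and since $\liminf_N a_{l,N}/\prod_{n\le N}\nu_n=0$ for each $l$, a diagonal choice of $n_l\uparrow$ with $a_{l,n_l}/\prod_{n\le n_l}\nu_n<1/l$ yields, for fixed $m$ and $l\geq m$, the bound $d_{n_l}a_{m,n_l}=a_{m,n_l}/\prod_{n\le n_l}\nu_n\leq a_{l,n_l}/\prod_{n\le n_l}\nu_n<1/l\to 0$, which is exactly hypercyclicity.

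For necessity I would start from a hypercyclic $B_w$ with associated $d$ and chain its operator witnesses: set $m_1=1$ and, given $m_k$, put $m_{k+1}=\max(\mu(m_k),m_k+1)$ and $C_k=C(m_k)$, where $(\mu(m_k),C(m_k))$ witnesses the operator condition at level $m_k$. This gives $d_{n-1}/d_n\leq C_k\,a_{m_{k+1},n}/a_{m_k,n-1}$ for all $k,n$, hence $\nu_n\geq d_{n-1}/d_n>0$, and telescoping gives $\prod_{n\le N}\nu_n\geq 1/d_N$, i.e. $a_{m,N}/\prod_{n\le N}\nu_n\leq a_{m,N}d_N$; since hypercyclicity supplies a single sequence along which $a_{m,N}d_N\to 0$ for every $m$, we conclude $\liminf_N a_{m,N}/\prod_{n\le N}\nu_n=0$. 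The step I expect to require the most care is the identification in the second paragraph—recognizing, after the cofinal chaining with $\mu=m_{k+1}$, that the operator condition is exactly $d_{n-1}/d_n\leq\nu_n$ and that the extremal admissible shift has $d_N=\prod_{n\le N}\nu_n^{-1}$; once this is in place, both implications reduce to monotonicity of $A$ and a routine diagonal extraction of one subsequence valid for all seminorms at once.
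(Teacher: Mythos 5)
Your proof is correct and follows essentially the same route as the paper: for necessity you chain the continuity witnesses into a single sequence $(m_k)_k$ so that $\nu_n\geq |w_n|^p>0$ and then invoke the hypercyclicity criterion of \cite[Theorem 4.8]{GrPe11}, and for sufficiency you take the extremal shift $w_n=\nu_n^{1/p}$ and use the monotonicity $a_{m,n}\leq a_{m+1,n}$ together with a diagonal extraction of one subsequence valid for all seminorms. The reformulation via $d_n=|v_n|^p$ and the telescoping identity is only a presentational repackaging of the paper's argument, not a different method.
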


\begin{proof}
Let $B_{w}$ be a hypercyclic weighted shift on $X$. In this proof, let $p=1$ for the space $c_0(A)$. By continuity, there exist a strictly increasing sequence $(m_k)_k$ of positive integers and a positive sequence $(C_k)_k$ such that, for every $n\ge 1$ and $k\ge 1$, we have
\[
|w_n|^p a_{m_k,n-1}\le C_k a_{m_{k+1},n}
\]
hence, for any $n\geq 1$,
\[
\nu_n:=\inf_{k\geq 1} C_k \frac{a_{m_{k+1},n}}{a_{m_k,n-1}}\geq |w_n|^p>0.
\]
Moreover, since $B_w$ is hypercyclic, we have that, for every $m\geq 1$,
\begin{equation*}
\liminf_{N\to\infty} \frac{1}{\prod_{n=1}^{N}\nu_n}a_{m,N}\leq \liminf_{N\to\infty} \frac{1}{\prod_{n=1}^{N}|w_n|^p}a_{m,N}=0,
\end{equation*}
see \cite[Theorem 4.8]{GrPe11}. This shows necessity.

The sufficiency is easily seen by considering the weighted shift $B_w$ with $w_n=\nu_n^{1/p}$, $n\geq 1$, and using the fact that $a_{m,n}\leq a_{m+1,n}$ for all $m\geq 1$, $n\geq 0$.
\end{proof}

Under an additional assumption on the K\"othe matrix $A$, the characterising condition can be simplified.

\begin{corollary}\label{cor-exhcko}
Let $X=\lambda^p(A)$, $1\leq p<\infty$, or $X=c_0(A)$ be a K\"othe sequence space. Suppose that
\begin{equation}\label{eq10} 
\forall m,j\ge 1,\, \exists J\ge 1, C>0,\, \forall n\geq 0:\,\frac{a_{m,n+1}}{a_{1,n}}\leq C \frac{a_{J,n+1}}{a_{j,n}}.
\end{equation} 
Then there exists a hypercyclic weighted shift on $X$ if and only if 
\begin{equation}\label{eq10bis} 
\exists \mu\ge 1,\, C>0,\, \forall m\ge 1:\, \liminf_{N\to\infty} \frac{\prod_{n=1}^{N}a_{1,n-1}}{C^N\prod_{n=1}^{N}a_{\mu,n}}a_{m,N}=0.
\end{equation} 
\end{corollary}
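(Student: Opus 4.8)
The plan is to derive the corollary from Proposition \ref{prop-exhcko} by showing that, under the regularity hypothesis \eqref{eq10}, the somewhat awkward condition appearing there (existence of a sequence $(m_k)_k$ and constants $(C_k)_k$ with $\nu_n>0$ and $\liminf_N \frac{1}{\prod_{n\le N}\nu_n}a_{m,N}=0$ for all $m$) collapses to the cleaner condition \eqref{eq10bis}. The key heuristic is that \eqref{eq10} says that the "cross-ratio" $\frac{a_{J,n+1}}{a_{j,n}}$ is, up to a multiplicative constant, essentially independent of the indices $j,J$ (it is comparable to $\frac{a_{m,n+1}}{a_{1,n}}$ for every $m$, hence to $\frac{a_{\mu,n+1}}{a_{1,n}}$ for a single good $\mu$); so the product $\prod_{n=1}^N \nu_n$ is forced to be comparable to $C^N \prod_{n=1}^N \frac{a_{\mu,n}}{a_{1,n-1}}$ for an appropriate constant $C$ and index $\mu$, and the $\liminf$ condition transfers directly.

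For the implication \eqref{eq10bis} $\Longrightarrow$ existence, I would simply exhibit the needed data for Proposition \ref{prop-exhcko}: take the sequence $(m_k)_k$ and constants adapted so that $\nu_n \gtrsim \frac{a_{\mu,n}}{a_{1,n-1}}\cdot\frac1C$ (using $a_{m,n}\le a_{m+1,n}$ and \eqref{eq10} to control the infimum over $k$ from below by a single quotient), whence $\prod_{n\le N}\nu_n \gtrsim \frac{\prod_{n\le N} a_{\mu,n}}{C^N \prod_{n\le N} a_{1,n-1}}$, and then \eqref{eq10bis} gives $\liminf_N \frac{1}{\prod_{n\le N}\nu_n} a_{m,N} \le C' \liminf_N \frac{\prod_{n\le N} a_{1,n-1}}{C^N\prod_{n\le N}a_{\mu,n}} a_{m,N}=0$ for every $m$. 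Proposition \ref{prop-exhcko} then produces a hypercyclic weighted shift. Conversely, assume a hypercyclic weighted shift exists; Proposition \ref{prop-exhcko} supplies $(m_k)_k$, $(C_k)_k$, $\nu_n>0$ with the $\liminf$ property. I would fix $j=m_1$, $m=m_2$ in \eqref{eq10}, obtaining $J$ and a constant, which combined with $\nu_n \le C_1 \frac{a_{m_2,n}}{a_{m_1,n-1}}$ lets me bound $\prod_{n\le N}\nu_n$ from above by $C^N \prod_{n\le N}\frac{a_{J,n}}{a_{m_1,n-1}}$; absorbing $a_{m_1,\cdot}^{-1}$ against $a_{1,\cdot}^{-1}\ge a_{m_1,\cdot}^{-1}$ and using \eqref{eq10} once more to replace $a_{J,n}$ by a multiple of $\frac{a_{J',n}}{a_{1,n}}a_{1,n-1}$... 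I would instead run the estimate so that $\mu:=J$ works: $\frac{1}{\prod_{n\le N}\nu_n}\ge \frac{1}{C^N}\cdot\frac{\prod_{n\le N} a_{1,n-1}}{\prod_{n\le N}a_{J,n}}$ (after replacing $a_{m_1,n-1}$ by the larger-or-comparable $a_{1,n-1}$ in a controlled way via \eqref{eq10}), and then the $\liminf$ property for $m$ yields \eqref{eq10bis} with $\mu=J$.

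The main obstacle I anticipate is bookkeeping with the constants and making the substitutions between $a_{1,n}$, $a_{m_1,n}$ and the generic $a_{j,n}$ go in the right direction: hypothesis \eqref{eq10} only bounds $\frac{a_{m,n+1}}{a_{1,n}}$ above by $C\frac{a_{J,n+1}}{a_{j,n}}$, so I must be careful that each time I invoke it the inequality points the way I need (one direction for the "only if", the opposite leveraging of $a_{m,n}\le a_{m+1,n}$ for the "if"). I expect no conceptual difficulty beyond this: the telescoping products behave well because the shift by one index in the numerator versus denominator is exactly what \eqref{eq10} is designed to control, and the $\liminf_N \frac1N\log(\cdot)$-type quantities are insensitive to a fixed multiplicative constant per factor, i.e. to the $C^N$.
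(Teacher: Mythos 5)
Your plan is correct and follows essentially the same route as the paper: for sufficiency you iterate \eqref{eq10} (with $m=\mu$ and $j=m_k$) to produce the data $(m_k)_k$, $(C_k)_k$ for Proposition \ref{prop-exhcko} with $\nu_n\geq C\,a_{\mu,n}/a_{1,n-1}$, and for necessity you read off a suitable $\mu$ from the first two terms supplied by that proposition. The one simplification you miss is that necessity needs no appeal to \eqref{eq10} at all: the bound $\nu_n\leq C_1\,a_{m_2,n}/a_{m_1,n-1}\leq C_1\,a_{m_2,n}/a_{1,n-1}$ follows from the monotonicity $a_{1,n-1}\leq a_{m_1,n-1}$ of the K\"othe matrix alone, so $\mu=m_2$ works directly (and note $a_{1,n-1}$ is the \emph{smaller} entry, not the larger one as you wrote --- which is precisely why substituting it into the denominator enlarges the quotient in the direction you need).
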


\begin{proof}
The condition is even necessary without assumption \eqref{eq10} on $A$. Indeed, if $X$ supports a hypercyclic weighted shift then by the previous proposition there is a strictly positive sequence $(\nu_n)_n$ as well as $m_1$, $\mu:=m_2\geq 1$ and $C>0$ such that
\[
\nu_n\leq  C \frac{a_{\mu,n}}{a_{m_1,n-1}}\leq C\frac{a_{\mu,n}}{a_{1,n-1}},\ n\geq 1, \quad\text{and}\quad \liminf_{N\to\infty} \frac{1}{\prod_{n=1}^{N}\nu_n}a_{m,N}=0,\ m\geq 1.
\]
This implies \eqref{eq10bis}.

On the other hand, suppose that \eqref{eq10bis} holds with some $\mu\geq 1$ and $C>0$. Then one obtains by \eqref{eq10} a strictly increasing sequence $(m_k)_k$ of positive integers and a positive sequence $(C'_k)_k$ such that, for all $k\geq 1$ and $n\geq 0$,
\[
\frac{a_{\mu,n+1}}{a_{1,n}}\leq C'_k \frac{a_{m_{k+1},n+1}}{a_{m_k,n}}.
\]
In particular, setting $C_k=CC'_k$, we deduce that for any $n\geq 1$,
\[
\nu_n:=\inf_{k\geq 1} C_k\frac{a_{m_{k+1},n}}{a_{m_k,n-1}}\ge C\frac{a_{\mu,n}}{a_{1,n-1}}>0
\]
and that for every $m\ge 1$,
\[
\liminf_{N\to\infty} \frac{1}{\prod_{n=1}^{N}\nu_n}a_{m,N}\le \liminf_{N\to\infty} \frac{1}{C^N\prod_{n=1}^{N}\frac{a_{\mu,n}}{a_{1,n-1}}}a_{m,N} =0.
\]
\end{proof}

A Köthe sequence space without hypercyclic weighted shifts can arise quite naturally.

\begin{example}\label{ex-nohc}
Let $X$ be the sequence space given by
\[
X= \Big\{ x=(x_n)_{n\geq 0} : \sum_{n\geq 0} |x_n| \rho^{2^n} <\infty\text{ for all }\rho>0\Big\}.
\]
This can be considered as the space of entire functions with certain lacunary power series. Then $X$ is the K\"othe sequence space $\lambda^1(A)$ with $a_{m,n} = m^{2^n}$, $m\geq 1$, $n\geq 0$. It is easily verified that \eqref{eq10} holds but \eqref{eq10bis} does not. Thus, $X$ supports no hypercyclic weighted shift.
\end{example}

This example will be generalized in Example \ref{ex infinite type}(a).

\subsubsection{Existence of chaotic weighted shifts}
This case can be treated exactly as the existence of hypercyclic shifts. Note, however, that the characterizing condition of chaos, $\Big(\frac{1}{\prod_{n=1}^{N}w_n}\Big)_{N\geq 0}\in X$ (see condition \eqref{eq0}) depends on the order of the Köthe sequence space.

\begin{proposition}\label{prop-exchako}
Let $X=\lambda^p(A)$, $1\leq p<\infty$, or $X=c_0(A)$ be a K\"othe sequence space. Then there exists a chaotic weighted shift on $X$ if and only if there exist a strictly increasing sequence $(m_k)_k$ of positive integers and a positive sequence $(C_k)_k$ such that
\[
\forall n\geq 1,\,\nu_n:=\inf_{k\geq 1} C_k \frac{a_{m_{k+1},n}}{a_{m_k,n-1}}>0\quad\text{and}\quad 
\Big(\frac{1}{\prod_{n=1}^{N}\nu_n ^{1/p}}\Big)_{N\geq 0}\in X,
\]
where $p=1$ if $X=c_0(A)$.
\end{proposition}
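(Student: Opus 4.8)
The plan is to mimic the proof of Proposition~\ref{prop-exhcko}, replacing the hypercyclicity criterion by the characterization of chaos recalled in \eqref{eq0}, namely that a weighted shift $B_w$ on a K\"othe sequence space $X$ with $(e_n)_n$ an unconditional basis is chaotic if and only if $\sum_{n\geq 0}\frac{1}{w_1\cdots w_n}e_n$ converges in $X$; since $(e_n)_n$ is a boundedly complete basis of $X$ (it is any $\lambda^p(A)$ or $c_0(A)$), and since the coefficients $v_n=\frac{1}{w_1\cdots w_n}$ form an \emph{unconditionally summable} family precisely when $(v_n)_n\in X$, this condition is equivalent to $\big(\frac{1}{\prod_{n=1}^N w_n}\big)_{N\geq 0}\in X$, i.e.\ condition \eqref{eq0}.

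First I would prove necessity. Suppose $B_w$ is a chaotic weighted shift on $X$. Since $B_w$ defines an operator on $X$, the continuity condition \eqref{eq8v} (equivalently, the one stated just before it) gives a strictly increasing sequence $(m_k)_k$ of positive integers and constants $C_k>0$ such that $|w_n|^p a_{m_k,n-1}\leq C_k a_{m_{k+1},n}$ for all $n\geq 1$, $k\geq 1$, where $p=1$ in the $c_0(A)$ case; hence $\nu_n:=\inf_{k\geq 1} C_k\frac{a_{m_{k+1},n}}{a_{m_k,n-1}}\geq |w_n|^p>0$ for each $n\geq 1$. This is exactly the first displayed condition. For the second, chaos of $B_w$ means $\big(\frac{1}{\prod_{n=1}^N w_n}\big)_{N\geq 0}\in X$ by \eqref{eq0}; since $\nu_n^{1/p}\geq |w_n|$, we have $\frac{1}{\prod_{n=1}^N \nu_n^{1/p}}\leq \frac{1}{\prod_{n=1}^N |w_n|}$ for all $N$, and because $X$ is solid (membership in $\lambda^p(A)$ or $c_0(A)$ is preserved under taking absolute values and passing to termwise-smaller sequences), it follows that $\big(\frac{1}{\prod_{n=1}^N \nu_n^{1/p}}\big)_{N\geq 0}\in X$ as well.

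For sufficiency, given $(m_k)_k$ and $(C_k)_k$ with the two stated properties, I would take the weighted shift $B_w$ defined by $w_n=\nu_n^{1/p}$, $n\geq 1$. One must check two things: that $B_w$ is a genuine operator on $X$, and that it is chaotic. The first follows since for each fixed $m\geq 1$ we may pick $k$ with $m_k\geq m$, and then $|w_n|^p a_{m,n-1}=\nu_n a_{m,n-1}\leq \nu_n a_{m_k,n-1}\leq C_k a_{m_{k+1},n}$ for all $n\geq 1$ (using $\nu_n\leq C_k\frac{a_{m_{k+1},n}}{a_{m_k,n-1}}$ from the definition of the infimum and $a_{m,n-1}\le a_{m_k,n-1}$), which is precisely the continuity condition \eqref{eq8v} with $\mu=m_{k+1}$ and $C=C_k$. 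The second is immediate from the hypothesis $\big(\frac{1}{\prod_{n=1}^N w_n}\big)_{N\geq 0}=\big(\frac{1}{\prod_{n=1}^N \nu_n^{1/p}}\big)_{N\geq 0}\in X$ together with \eqref{eq0}.

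No step here is a genuine obstacle; the proof is essentially bookkeeping parallel to Proposition~\ref{prop-exhcko}, with the only real points being to (i) record the equivalence of \eqref{eq0} with membership $(v_n)_n\in X$, which relies on $(e_n)_n$ being a boundedly complete \emph{unconditional} basis so that boundedness of the partial sums of $\sum v_n e_n$ already forces $(v_n)_n\in X$, and (ii) invoke solidity of K\"othe sequence spaces to compare $\nu_n^{1/p}$ with $|w_n|$ in both directions. The mild care needed is to keep the roles of $p$ and of the exponent $1/p$ straight and to handle $c_0(A)$ by formally setting $p=1$, exactly as in the earlier proposition.
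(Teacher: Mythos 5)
Your proof is correct and is essentially the argument the paper intends: the paper omits the proof of Proposition~\ref{prop-exchako}, stating that it is treated exactly as Proposition~\ref{prop-exhcko} but with the chaos criterion \eqref{eq0} in place of the hypercyclicity criterion, which is precisely what you do (continuity of $B_w$ gives $\nu_n\geq|w_n|^p$ and solidity transfers \eqref{eq0} to $(\prod\nu_n^{-1/p})_N$; conversely $w_n=\nu_n^{1/p}$ works). One small caveat: your appeal to bounded completeness of $(e_n)_n$ is both unnecessary and not generally valid for $c_0(A)$ (see Proposition~\ref{caracc_0}); the equivalence between $(v_n)_n\in X$ and convergence of $\sum_n v_ne_n$ holds for any Fr\'echet sequence space in which $(e_n)_n$ is a basis, simply because the coordinate functionals are continuous.
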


\begin{corollary}\label{cor-exchako}
Let $X=\lambda^p(A)$, $1\leq p<\infty$, or $X=c_0(A)$ be a K\"othe sequence space such that \eqref{eq10} holds.
Then there exists a chaotic weighted shift on $X$ if and only if
\[
\exists \mu\ge 1,\,  C>0:\,\Big(\frac{\prod_{n=1}^{N}a_{1,n-1} ^{1/p}}{C^N\prod_{n=1}^{N}a_{\mu,n} ^{1/p}}\Big)_{N\geq 0} \in X,
\]
where $p=1$ if $X=c_0(A)$.
\end{corollary}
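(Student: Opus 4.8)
The plan is to deduce Corollary \ref{cor-exchako} from Proposition \ref{prop-exchako} in exactly the same way that Corollary \ref{cor-exhcko} was deduced from Proposition \ref{prop-exhcko}. The only real difference is that the condition $\liminf_{N\to\infty}\frac{1}{\prod_{n=1}^N \nu_n}a_{m,N}=0$ (for all $m$) is now replaced by the single membership condition $\big(\prod_{n=1}^N \nu_n^{-1/p}\big)_{N\geq 0}\in X$, which already encodes the quantifier over $m$ through the definition of $X$. So I would check that the two standard implications go through with $\nu_n$ replaced, in the necessity direction, by $|w_n|^p$, and in the sufficiency direction, using monotonicity $a_{m,n}\le a_{m+1,n}$, by a weight built from the $\nu_n$.

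For the necessity, suppose $X$ supports a chaotic weighted shift $B_w$. By Proposition \ref{prop-exchako} there are a strictly increasing sequence $(m_k)_k$ and a positive sequence $(C_k)_k$ with $\nu_n=\inf_{k\ge1} C_k\frac{a_{m_{k+1},n}}{a_{m_k,n-1}}>0$ and $\big(\prod_{n=1}^N\nu_n^{-1/p}\big)_N\in X$. Taking $k=1$ in the infimum and writing $\mu:=m_2$, $C:=C_1$, we get $\nu_n\le C\frac{a_{\mu,n}}{a_{m_1,n-1}}\le C\frac{a_{\mu,n}}{a_{1,n-1}}$ for all $n\ge1$ (using $a_{1,n-1}\le a_{m_1,n-1}$). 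Hence $\frac{1}{\prod_{n=1}^N\nu_n^{1/p}}\ge \frac{\prod_{n=1}^N a_{1,n-1}^{1/p}}{C^N\prod_{n=1}^N a_{\mu,n}^{1/p}}$ coordinatewise (at index $N$), so termwise domination together with monotonicity of the K\"othe seminorms and the fact that a smaller nonnegative sequence than an element of $X$ is again in $X$ (which holds for $\lambda^p(A)$ and $c_0(A)$) yields $\big(\frac{\prod_{n=1}^N a_{1,n-1}^{1/p}}{C^N\prod_{n=1}^N a_{\mu,n}^{1/p}}\big)_N\in X$; this is the stated condition. Note this half needs no assumption on $A$.

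For the sufficiency, assume the displayed condition holds with some $\mu\ge1$, $C>0$. Exactly as in the proof of Corollary \ref{cor-exhcko}, condition \eqref{eq10} produces a strictly increasing sequence $(m_k)_k$ and positive $(C'_k)_k$ with $\frac{a_{\mu,n+1}}{a_{1,n}}\le C'_k\frac{a_{m_{k+1},n+1}}{a_{m_k,n}}$ for all $k\ge1$, $n\ge0$; setting $C_k=CC'_k$ gives $\nu_n:=\inf_{k\ge1}C_k\frac{a_{m_{k+1},n}}{a_{m_k,n-1}}\ge C\frac{a_{\mu,n}}{a_{1,n-1}}>0$. Then $\frac{1}{\prod_{n=1}^N\nu_n^{1/p}}\le \frac{\prod_{n=1}^N a_{1,n-1}^{1/p}}{C^N\prod_{n=1}^N a_{\mu,n}^{1/p}}$ at index $N$, and by the same hereditary property of $X$ under pointwise domination we get $\big(\prod_{n=1}^N\nu_n^{-1/p}\big)_N\in X$. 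Proposition \ref{prop-exchako} (applied with this $(m_k)_k$ and $(C_k)_k$, and $p=1$ when $X=c_0(A)$) then furnishes a chaotic weighted shift on $X$.

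The only point requiring a small word of care — and the step I expect to be the "main obstacle," though it is mild — is the passage from a coordinatewise inequality $0\le y_N\le x_N$ with $(x_N)_N\in X$ to $(y_N)_N\in X$. For $X=\lambda^p(A)$ this is immediate from the monotonicity of each seminorm $\|\cdot\|_m$ in the absolute values of the coordinates, and for $X=c_0(A)$ it follows since $0\le y_N a_{m,N}\le x_N a_{m,N}\to0$. I would simply invoke this monotonicity rather than belabor it. Everything else is a verbatim transcription of the argument for Corollary \ref{cor-exhcko}, with the $\liminf$-vanishing condition replaced throughout by the membership-in-$X$ condition, so I would keep the write-up correspondingly short, e.g.\ "The proof is identical to that of Corollary \ref{cor-exhcko}, replacing the condition $\liminf_N \frac{1}{\prod_{n=1}^N\nu_n}a_{m,N}=0$ for all $m$ by $\big(\prod_{n=1}^N\nu_n^{-1/p}\big)_N\in X$ and using the monotonicity of the seminorms of $X$."
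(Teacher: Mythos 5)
Your proof is correct and is exactly the argument the paper intends: the paper gives no separate proof here, remarking only that the chaotic case ``can be treated exactly as the existence of hypercyclic shifts'', i.e.\ by transcribing the proof of Corollary~\ref{cor-exhcko} with the $\liminf$-condition replaced by membership in $X$ and using the solidity of $\lambda^p(A)$ and $c_0(A)$ under coordinatewise domination, which is precisely what you do. The only cosmetic adjustment is that the $1/p$-th powers turn $C^N$ into $C^{N/p}=(C^{1/p})^N$, which is absorbed by renaming the existential constant (e.g.\ take $C_k=C^pC'_k$ in the sufficiency direction).
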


\subsection{When frequent hypercyclicity implies chaos on K\"othe sequence spaces}\label{subsec-Koethe-pos}

Let us spell out Theorem \ref{thm-Frechet} in the case of K\"othe sequence spaces. 

\begin{theorem}\label{thm-kothe}
Let $X$ be a Köthe sequence space $\lambda^p(A)$, or a Köthe sequence space $c_0(A)$ that satisfies \eqref{BC}. Suppose that the following condition holds: 

\emph{(TK)} for any strictly positive sequence $v=(v_n)_n$, if
\begin{equation*}
\forall m\geq 1,\,\exists\mu\geq 1, C>0,\,\forall n\geq 1:\, v_{n-1}a_{m,n-1}\leq C v_{n} a_{\mu,n},
\end{equation*}
then 
\begin{equation}\label{eq13}
\forall m\geq 1,\,\exists\mu\geq 1,\,\forall j\geq 1,\,\exists\  C_j>0,\,\forall n\geq j:\,v_{n-j} a_{m,n-j} \leq C_j v_{n} a_{\mu,n}.
\end{equation}

Let $B_w$ be a weighted shift on $X$. Then the following assertions are equivalent:
\begin{enumerate}
\item[\rm (i)] $B_{w}$ is $\UU$-frequently hypercyclic on $X$;
\item[\rm (ii)] $B_{w}$ is frequently hypercyclic on $X$;
\item[\rm (iii)] $B_{w}$ is chaotic on $X$;
\item[\rm (iv)] the series $\sum _{n\geq 0}\frac{1}{w_1\cdots w_n}e_n$ is convergent in $X$.
\end{enumerate}
\end{theorem}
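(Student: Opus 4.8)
The plan is to deduce Theorem~\ref{thm-kothe} from Theorem~\ref{thm-Frechet} by verifying its two hypotheses for the Köthe sequence spaces under consideration: that $(e_n)_n$ is a boundedly complete unconditional basis, and that $X$ satisfies property (T). The first is immediate: for $X=\lambda^p(A)$ with $1\le p<\infty$ the basis $(e_n)_n$ is always boundedly complete, and for $X=c_0(A)$ satisfying \eqref{BC} the bounded completeness is exactly the content of Proposition~\ref{caracc_0}. Unconditionality of $(e_n)_n$ holds in all $\lambda^p(A)$ and $c_0(A)$, as noted before Remark~\ref{rem-KoBa}. So the entire weight of the proof falls on showing that condition (TK) forces property (T).

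First I would fix a weighted shift $B_w$ on $X$ and let $v=(v_n)_n$ be the sequence associated to $w$ by \eqref{eq_v}. Since $B_w$ is an operator on $X$, the continuity condition \eqref{eq8v} holds; dividing through, this says precisely that the hypothesis of (TK) is satisfied by $v$ (in the $\lambda^p$ case one takes $p$-th roots, writing $v_{n-1}^{}a_{m,n-1}^{1/p}\le C^{1/p}v_n a_{\mu,n}^{1/p}$ and absorbing the exponent into a relabelled Köthe matrix $a_{m,n}^{1/p}$, or one simply works directly with the $p$-th power version of (TK) — I would state (TK) uniformly so this is transparent). Hence (TK) yields the conclusion \eqref{eq13}: for every $m\ge 1$ there is $\mu\ge 1$ so that for every $j\ge 1$ there is $C_j>0$ with $v_{n-j}a_{m,n-j}\le C_j v_n a_{\mu,n}$ for all $n\ge j$.

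Next I would translate \eqref{eq13} into property (T). Using the formula \eqref{def-bw}, for $x=(x_k)_k\in X$ one has $B_w^j x=\sum_{k\ge 0}\frac{v_k}{v_{k+j}}x_{k+j}e_k$, so with the Köthe seminorm $\|x\|_m=\big(\sum_k|x_k|^p a_{m,k}\big)^{1/p}$ one computes
\[
\|B_w^j x\|_m^p=\sum_{k\ge 0}\Big|\frac{v_k}{v_{k+j}}\Big|^p|x_{k+j}|^p a_{m,k}
=\sum_{n\ge j}\Big|\frac{v_{n-j}}{v_{n}}\Big|^p|x_{n}|^p a_{m,n-j}.
\]
By \eqref{eq13} (in its $p$-th power form) the coefficient $\big|\frac{v_{n-j}}{v_n}\big|^p a_{m,n-j}$ is bounded by $C_j^p a_{\mu,n}$, so $\|B_w^j x\|_m\le C_j\|x\|_\mu$ with $\mu=\mu(m)$ independent of $j$; this is exactly condition (T) with $q(m)=\mu$ and $C_{m,j}=C_j$. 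The $c_0(A)$ case is the same computation with suprema replacing sums (and $p=1$ throughout). With (T) and bounded completeness in hand, Theorem~\ref{thm-Frechet} applies verbatim and gives the equivalence of (i)--(iv).

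The one genuine subtlety — and the place I would be most careful — is the bookkeeping between the two formulations of (TK) for $\lambda^p(A)$ versus $c_0(A)$: in \eqref{eq8v} the space $\lambda^p(A)$ carries $|v_{n-1}|^p a_{m,n-1}$ while (TK) as displayed is stated with $v_{n-1}a_{m,n-1}$, so one must decide once and for all whether (TK) is meant to be applied to the matrix $A$ directly (the $c_0$ convention, $p=1$) or to the matrix $(a_{m,n}^{1/p})$ — equivalently to the sequence $(v_n^{})$ against $(a_{m,n}^{1/p})$. I would resolve this by observing that $(a_{m,n}^{1/p})$ is again a Köthe matrix and that (TK) for $A$ is equivalent to (TK) for $(a_{m,n}^{1/p})$ after rescaling the constants, so that the statement is clean regardless of the order of the space; everything else is the routine substitution carried out above. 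No new ideas beyond Theorem~\ref{thm-Frechet} are needed.
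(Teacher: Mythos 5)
Your proposal is correct and follows exactly the route the paper takes: the paper's entire justification is the one-line remark after the statement that \eqref{eq8v} (and its analogue for $B_w^j$) makes condition (T) for $X$ equivalent to condition (TK) for $A$, combined with Proposition~\ref{caracc_0} for bounded completeness in the $c_0(A)$ case; your seminorm computation simply fills in the details the paper calls ``straightforward.'' The only bookkeeping point worth noting is that (TK) should be applied to the strictly positive sequence $(|v_n|^p)_n$ against the matrix $A$ itself (so no rescaling of $A$ is needed, and the modulus handles complex weights), which is a slightly cleaner resolution of the subtlety you flag at the end.
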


Indeed, since $B_w$ being an operator on $X$ can be expressed by condition \eqref{eq8v}, with a similar condition for $B_w^j$, it is straightforward to see that condition (T) for the space $X$ is equivalent to condition (TK) for the K\"othe matrix $A$.

In the case of Köthe sequence spaces $\lambda^p(A)$ we have already examples to which the theorem is applicable: these are given by the classical $\ell^p$-spaces since they are Banach spaces and thus satisfy (T) trivially. But there also are non-Banach examples and even examples of order $0$.

\begin{proposition}\label{propHD}
Every frequently hypercyclic weighted shift on $H(\D)$ is chaotic.
\end{proposition}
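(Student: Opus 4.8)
The plan is to realize $H(\mathbb{D})$ as a Köthe sequence space of finite order and then verify that it falls under the hypotheses of Theorem~\ref{thm-kothe}. Recall that $H(\mathbb{D})$, with the topology of uniform convergence on compact subsets, is identified via Taylor coefficients with the space $\{(x_n)_n : \sum_n |x_n| r^n < \infty \text{ for all } 0<r<1\}$, which is the Köthe sequence space $\lambda^1(A)$ with the matrix $a_{m,n} = r_m^n$ for a fixed sequence $r_m \uparrow 1$ (say $r_m = 1 - \tfrac1{m+1}$); it is equally $c_0(A)$ or $\lambda^p(A)$ for any $p$, since this matrix satisfies condition \eqref{N}, but it is cleanest to work with $\lambda^1(A)$. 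Being of order $p=1 \in [1,\infty)$, the space automatically has $(e_n)_n$ as a boundedly complete unconditional basis, so the only thing left to check in order to invoke Theorem~\ref{thm-kothe} is condition (TK) for this matrix.

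So the core of the argument is this: given a strictly positive sequence $v=(v_n)_n$ such that
\[
\forall m\ge 1,\ \exists \mu\ge 1,\ C>0,\ \forall n\ge 1:\quad v_{n-1} r_m^{n-1} \le C\, v_n r_\mu^n,
\]
I must produce, for each $m$, some $\mu$ such that for every $j$ there is $C_j$ with $v_{n-j} r_m^{n-j} \le C_j\, v_n r_\mu^n$ for all $n\ge j$. The point is that when one iterates the given one-step inequality $j$ times the intermediate matrix index escalates: the first step gives index $\mu_1$, applying the hypothesis again to $\mu_1$ gives $\mu_2$, and so on, so a priori $v_{n-j} r_m^{n-j} \le C^{(j)} v_n r_{\mu_j}^n$ with $\mu_j$ depending on $j$, which is exactly the failure that (TK) is designed to rule out. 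To fix this I would exploit the special multiplicative structure $a_{m,n} = r_m^n$ together with the density of the $r_m$ in $(0,1)$. Indeed, since $r_m \uparrow 1$, for the given $m$ one can choose $\mu$ with $r_\mu$ arbitrarily close to $1$, in particular with $r_\mu > r_m$; I then want to show that this single $\mu$ works for all $j$ simultaneously. The mechanism is that the one-step hypothesis forces $v_n r_\mu^n$ to not decay too fast relative to $v_{n-1} r_m^{n-1}$, i.e. $\tfrac{v_n}{v_{n-1}} \ge \tfrac{1}{C} r_m^{n-1} / r_\mu^n$; chaining this and comparing geometric factors $r_m^j$ versus $r_\mu^j$ should absorb the exponential loss because $r_\mu/r_m>1$ beats any fixed constant $C^j$ only after finitely many steps — so one handles small $n$ by a crude bound and large $n$ by the geometric gain.

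The key steps in order: (1) identify $H(\mathbb{D})$ with $\lambda^1(A)$, $a_{m,n}=r_m^n$, $r_m\uparrow 1$, and note $(e_n)_n$ is automatically a boundedly complete unconditional basis since the order is finite; (2) observe that a weighted shift exists on $H(\mathbb{D})$ (Proposition~\ref{prop-existshift}(b) applies trivially here), so the statement is non-vacuous; (3) verify (TK): starting from the one-step inequality with index $\mu$ chosen so that $r_\mu > r_m$, show by induction on $j$ that $v_{n-j} r_m^{n-j} \le C_j v_n r_\mu^n$ with $C_j$ allowed to grow but the index staying fixed at $\mu$, using that the extra factor picked up at each step is $C\,(r_m/r_\mu)^{\,\text{(something)}} \le C$, so that $C_j \le C^j$ works; (4) invoke Theorem~\ref{thm-kothe} to conclude the equivalence of (i)–(iv), in particular that frequent hypercyclicity implies chaos.

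The main obstacle I anticipate is step (3), specifically making the induction close with a \emph{fixed} matrix index $\mu$ rather than one that drifts with $j$. Naively iterating the hypothesis loses control of the index; the trick must be to not re-apply the hypothesis but instead to telescope the single inequality $v_{n-1} r_m^{n-1} \le C v_n r_\mu^n$ directly $j$ times, writing $v_{n-j} r_m^{n-j} = \prod_{i=1}^{j}\big(v_{n-i}r_m^{n-i}/(v_{n-i+1}r_m^{n-i+1})\big)^{-1}\cdot v_n r_m^n$ and bounding each ratio. Here one uses $r_m^{n-i}/(r_\mu^{n-i+1}) = (r_m/r_\mu)^{n-i} r_\mu^{-1}$, and since $r_m < r_\mu$ this is summable/bounded uniformly, giving a bound $C_j \le (C/r_\mu)^j$ independent of $n$ after replacing $r_m^{n-j}$ by $r_\mu^{n-j} \le r_m^{n-j}$ where needed. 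Once this bookkeeping is done carefully, (TK) follows and the rest is immediate from Theorem~\ref{thm-kothe}.
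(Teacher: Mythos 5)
Your overall strategy (identify $H(\D)$ with a Köthe space of finite order, check that $(e_n)_n$ is a boundedly complete unconditional basis, verify (TK), invoke Theorem~\ref{thm-kothe}) is exactly the paper's, but your verification of (TK) — which is the entire mathematical content here — has a genuine gap, and the mechanism you propose in step (3) does not close. Telescoping the single one-step inequality $v_{n-1}r_m^{n-1}\le C\,v_n r_\mu^{n}$ for the \emph{given} $m$ gives, at each of the $j$ steps, not a bounded factor but a factor of size $(r_\mu/r_m)^{n-i}$: indeed, to chain you must convert $v_{n-i+1}r_\mu^{n-i+1}$ back into $v_{n-i+1}r_m^{n-i+1}$ before reapplying the inequality, at the cost of $(r_\mu/r_m)^{n-i+1}$. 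Since $r_m<r_\mu$, this factor is exponentially \emph{large} in $n$ (you have the fraction upside down when you claim it is ``summable/bounded uniformly''), and after $j$ steps the accumulated loss is roughly $(r_\mu/r_m)^{(j-1)n}$. To absorb this into $C_j\,r_{\mu'}^{\,n}/r_m^{\,n}$ you would need $r_{\mu'}\ge r_\mu(r_\mu/r_m)^{j-1}$, which exceeds $1$ for large $j$ and is therefore impossible for any fixed matrix index $\mu'$. Worse, the single instance of the hypothesis for the given $m$ is simply not strong enough: it permits $v_{n-1}/v_n$ to grow like $\rho^{n}$ with $\rho=r_\mu/r_m>1$, and for such $v$ the conclusion \eqref{eq13} is false. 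So no amount of bookkeeping with that one inequality can work.

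The missing idea, which is how the paper proceeds, is to use the hypothesis of (TK) for \emph{all} indices $m'$, not just the given one. For each $m'$ the one-step inequality yields $\limsup_n\bigl(\tfrac{v_{n-1}}{v_n}\bigr)^{1/n}\le \tfrac{1+1/m'}{1+1/\mu}\le 1+\tfrac{1}{m'}$, and letting $m'\to\infty$ gives the clean asymptotic statement $\limsup_n\bigl(\tfrac{v_{n-1}}{v_n}\bigr)^{1/n}\le 1$. This is the quantity that telescopes harmlessly: writing $\tfrac{v_{n-j}}{v_n}$ as a product of $j$ consecutive ratios and using submultiplicativity of the $\limsup$ of $n$-th roots gives $\limsup_n\bigl(\tfrac{v_{n-j}}{v_n}\bigr)^{1/n}\le 1$ for every fixed $j$. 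Then for the given $m$ one takes $\mu=m+1$ and uses that the target inequality only requires $\bigl(\tfrac{v_{n-j}}{v_n}\bigr)^{1/n}$ to stay eventually below $\tfrac{1+1/m}{1+1/(m+1)}>1$, which a $\limsup$ of at most $1$ guarantees. Your instinct that ``small $n$ by a crude bound, large $n$ by the geometric gain'' is the right shape of the final step, but the geometric gain only materializes after first upgrading the hypothesis to the $m'$-uniform statement $\limsup_n\bigl(\tfrac{v_{n-1}}{v_n}\bigr)^{1/n}\le 1$; without that upgrade there is no gain to exploit.
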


\begin{proof}
The space $H(\D)$ of holomorphic functions on the unit disk $\D$ can be identified with the Köthe sequence spaces $c_0(A)$ or $\lambda^p(A)$ for any $1\le p<\infty$ by letting $a_{m,n}=\frac{1}{\left(1+\frac{1}{m}\right)^{n}}$. These spaces coincide since $A$ satisfies (N), and it suffices to show that $A$ satisfies (TK). Let $(v_n)_n$ be a strictly positive sequence such that 
\begin{equation*}
\forall m\geq 1,\,\exists\mu\geq 1, C>0,\,\forall n\geq 1:\, v_{n-1}a_{m,n-1}\leq C v_{n} a_{\mu,n}.
\end{equation*}
We can first show that $\limsup_{n} \big(\frac{v_{n-1}}{v_n}\big)^{\frac{1}{n}}\le 1$. Indeed, given $m\geq 1$, $\mu\geq 1$, $C>0$ such that, for every $n\geq 1$, $v_{n-1}a_{m,n-1}\leq C v_{n} a_{\mu,n}$, we get
\[
\frac{v_{n-1}}{v_n}\le C \frac{a_{\mu,n}}{a_{m,n-1}}=\frac{C}{1+\frac{1}{m}}\Big(\frac{1+\frac{1}{m}}{1+\frac{1}{\mu}}\Big)^{n}
\]
and thus $\limsup_{n} \big(\frac{v_{n-1}}{v_n}\big)^{\frac{1}{n}}\le \frac{1+\frac{1}{m}}{1+\frac{1}{\mu}}\le 1+\frac{1}{m}$. Since this inequality holds for every $m\geq 1$, we deduce that $\limsup_{n} \big(\frac{v_{n-1}}{v_n}\big)^{\frac{1}{n}}\le 1$. 

We now show that if  $\limsup_{n} \big(\frac{v_{n-1}}{v_n}\big)^{\frac{1}{n}}\le 1$ then \eqref{eq13} holds.
Let $m\geq 1$, $\mu=m+1$, $j\geq 1$. We need to find a constant $C_j>0$ such that, for every $n\ge j$,
\[
\frac{v_{n-j}}{v_n}\le \frac{C_j}{(1+\frac{1}{m})^j} \Big(\frac{1+\frac{1}{m}}{1+\frac{1}{m+1}}\Big)^{n}.
\]
We remark that such a constant will exist if $\limsup_n \big(\frac{v_{n-j}}{v_n}\big)^{\frac{1}{n}}\le 1$, and since
\[
\limsup_n \Big(\frac{v_{n-j}}{v_n}\Big)^{\frac{1}{n}}\le \prod_{l=0}^{j-1} \limsup_n \Big(\frac{v_{n-l-1}}{v_{n-l}}\Big)^{\frac{1}{n}}
\le \prod_{l=0}^{j-1} \limsup_n \Big(\max\Big(1,\frac{v_{n-l-1}}{v_{n-l}}\Big)\Big)^{\frac{1}{n-l}}\le 1,
\]
we get the desired result.
\end{proof}

Note that the above result is not vacuous since $H(\D)$ supports chaotic weighted shifts such as the differentiation operator \cite{Sha98}. Examples of K\"othe matrices $A$ to which Theorem~\ref{thm-kothe} applies and where all the spaces $c_0(A)$ and $\lambda^p(A)$, $1\leq p<\infty$, are different can also be found (see Example \ref{ex-nonN}(a)).

\subsection{When frequent hypercyclicity does not imply chaos on K\"othe sequence spaces}\label{subsec-Koethe-neg}

We now show that some K\"othe sequence spaces, even of order $p\in[1,\infty)$, support frequently hypercyclic, non-chaotic weighted shifts.

For this we recall the following result; see \cite[Theorem 6.2]{BoGr18}. If $X$ is a Fr\'echet sequence space in which $(e_n)_n$ is an unconditional basis, then a weighted shift $B_w$ on $X$ is frequently hypercyclic if and only if there exist a sequence $(\varepsilon_r)_{r\geq 1}$ of positive numbers with $\varepsilon_r\to 0$ as $r\to\infty$ and a sequence $(A_r)_{r\geq 1}$ of pairwise disjoint sets of positive lower density such that
\begin{itemize}
\item[\rm (i)] for any $r\geq 1$,
\[
\sum_{n\in A_r} v_{n+r}e_{n+r} \quad \text{converges in $X$};
\]
\item[\rm (ii)] for any $r,s\geq 1$, any $m\in A_s$, and any $j=0,\ldots,r$,
\[
\Big\|\sum _{\substack{n\in A_r\\n>m}} v_{n-m+j}e_{n-m+j}\Big\|_s <\min(\varepsilon_r,\varepsilon_s),
\]
\end{itemize}
where $v$ is the sequence associated to $w$ via \eqref{eq_v}. 

Recall also that, by \eqref{eq0}, $B_w$ is chaotic if and only if 
\begin{equation}\label{eq14}
\sum_{n\geq 0} v_ne_n \quad \text{converges in $X$}.
\end{equation}

For the existence of a non-chaotic frequently hypercyclic weighted shift we will modify the construction of such a shift in $c_0$ as given in \cite[Theorem 7.3]{BoGr18}. The following was shown in the proof there.

\begin{lemma}[\cite{BoGr18}]\label{r-constr}
There exists a strictly increasing sequence $(N_k)_{k\geq 0}$ of non-negative integers with $N_0=0$ and pairwise disjoint sets $A_r$, $r\geq 1$, of positive lower density with the following properties:

\emph{(a)} For $r\geq 1$, if $n\in A_r$, then for any $k\geq 0$,
\[
N_{k}\leq n <N_{k+1} \Longrightarrow N_{k}+k \leq n < N_{k+1}-r.
\]

\emph{(b)} For $r,s\geq 1$, if $n\in A_r$, $m\in A_s$, $n> m$, then for any $k\geq 0$,
\[
N_{k}\leq n-m < N_{k+1} \Longrightarrow N_{k}+\max(r,s) \leq n-m < N_{k+1}-\max(r,s).
\]
\end{lemma}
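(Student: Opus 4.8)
The plan is to produce all the data by a single recursion on $k$ that fixes the endpoint $N_{k}$ of the $k$-th block $I_{k}=[N_{k},N_{k+1})$ together with the pieces $A_{r}\cap I_{k}$ of every set. I would let the $N_{k}$ grow very fast and place the elements of the $A_{r}$ only inside the \emph{centred windows} $[N_{k}+k,\,N_{k+1}-k)$ of the blocks, putting for distinct $r$ their elements on disjoint, suitably structured (arithmetic-progression-like) patterns so that the $A_{r}$ are automatically pairwise disjoint. With this set-up property (a) is immediate, since the start-buffer $k$ and the end-buffer $k$ already dominate the prescribed buffers $k$ and $r$. To guarantee that the sets reappear often enough I would fix beforehand an activation scheme assigning to each block index $k$ the indices $r$ that are ``active'' on $I_{k}$, arranged so that each fixed $r$ is active on a set of block indices of bounded gaps.

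For the positive lower density of each $A_{r}$ I would test the initial segments $[0,N]$ directly. Because every active block is nearly filled (in the appropriate density sense) by its pattern and consecutive blocks grow geometrically, the quotient $\operatorname{card}(A_{r}\cap[0,N])/(N+1)$ stays bounded below by a constant depending only on $r$. The only delicate values of $N$ are those at the very start of a large block, where one has accumulated only the content of the previous active block; here the growth rate together with the bounded-gap activation guarantees that this content is still a fixed fraction of $N$, so no dip to $0$ occurs.

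The heart of the proof is property (b), the control of the difference set, and I would split it according to whether $n$ and $m$ lie in the same block or not. For $n\in I_{k}$, $m\in I_{\ell}$ with $k>\ell$, the growing start-buffer keeps $n-m$ at distance more than $\max(r,s)$ from $0$ (using that $A_{r}$, $A_{s}$ are active only on blocks of index at least $r$, resp. $s$), while the rapid growth pins $n-m$ into a single high block, away from its boundaries, as soon as $\ell\le k-2$. The genuinely tight situations — adjacent blocks and, above all, same-block differences — I would handle by the arithmetic of the placement: the patterns are chosen so that the admissible differences lie in a thin, controlled set, and at the $k$-th step of the recursion one imposes finitely many congruence (and growth) conditions on $N_{k}$ so that each admissible difference stays at distance more than the relevant $\max(r,s)$ from every previously fixed boundary $N_{\ell'}$. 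Since at stage $k$ only finitely many boundaries and finitely many active indices are relevant, these are finitely many conditions and can be met simultaneously.

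I expect property (b) to be the main obstacle, and this is where the real tension of the construction sits: positive lower density forces the blocks to be essentially filled, so the within-block difference sets are unavoidably dense, whereas (b) demands that these differences miss fixed-width zones around \emph{every} boundary. Reconciling the two — in particular for pairs $(r,s)$ of very different size, where a large buffer $\max(r,s)$ must be respected by differences carrying only coarse arithmetic information — is precisely why one cannot use arbitrary positive-density sets but must place the $A_{r}$ on structured patterns and must fix the $N_{k}$ recursively rather than in advance. The most laborious part will be the bookkeeping: tracking which boundaries $N_{\ell'}$ and which pairs $(r,s)$ are relevant at stage $k$, and verifying that the buffer $\max(r,s)$ never exceeds the separation that the growth of $(N_{k})$ and the structure of the patterns provide.
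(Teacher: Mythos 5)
The paper does not actually prove this lemma: it is imported from the proof of \cite[Theorem 7.3]{BoGr18}, where the blocks and the sets are produced by an explicit construction. Measured against what a complete proof requires, your sketch correctly names the central difficulty but does not resolve it, and one of its reductions is simply false. You claim that rapid growth pins $n-m$ into a single high block, away from its boundaries, as soon as $m\in I_\ell$ with $\ell\le k-2$. But take $m\in A_s\cap I_\ell$ with $m\ge k$ (any element of a mid-range block qualifies) and look at the point $m+N_k$: since $k\le m<N_{\ell+1}\le N_{k-1}$, this point lies \emph{inside} the admissible window $[N_k+k,\,N_{k+1}-r)$ of block $k$, no matter how fast $(N_j)_j$ grows. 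So unless the pattern of $A_r$ in block $k$ is arithmetically arranged to miss the $\max(r,s)$-neighbourhood of $m+N_j$ for every previously placed $m$ and every $j\le k$, property (b) fails, with $n-m$ landing within $\max(r,s)$ of $N_k$. Hence \emph{all} cross-block pairs, not just adjacent blocks, need the arithmetic exclusion; and since the already placed sets have positive density, the forbidden set inside block $k$ has cardinality of order $N_k\cdot k\cdot\max(r,s)$, which under the geometric growth your density argument needs can exceed the block length, so crude avoidance does not close either. (Note also an internal tension: bounded-gap activation is incompatible with letting the $N_k$ ``grow very fast'' --- if $A_r$ skips block $k$ and $N_{k+1}/N_k\to\infty$, then $\mathrm{card}(A_r\cap[0,N_{k+1}])\le N_k+1=o(N_{k+1})$ and the lower density is $0$.)

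The deeper gap is in your pivotal mechanism, which has the quantifiers running the wrong way. Conditions imposed on $N_k$ cannot keep same-block differences away from the \emph{previously fixed} boundaries $N_{\ell'}$: those differences are determined by the offsets of the patterns, so there the constraints must be placed on the patterns, not on $N_k$. What a choice of $N_k$ \emph{can} do is keep the new boundary clear of the difference set; but the differences landing near $N_k$ include ones produced arbitrarily late, by two elements of a single very high block belonging to $A_r$ and $A_s$ with $\max(r,s)$ arbitrarily large, and these must clear $N_k$ by the unbounded margin $\max(r,s)$. Consequently ``finitely many congruence conditions at stage $k$, for the currently active indices'' cannot secure (b) at $N_k$: the clearance demanded at a fixed boundary involves infinitely many future pairs with unbounded buffers. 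A proof along your lines would have to fix \emph{in advance} a hierarchical arithmetic template --- for example, nested periodic structures whose excluded gaps have widths growing with the index, with each $N_k$ placed deep inside the nested gaps of every scale it will ever have to face, the densities of the excluded portions summable so that positive lower density survives, together with a device (valuation conditions, dedicated subsegments, or the like) forcing pairs that involve a large index to produce only differences respecting the high scales. You flag the pairs $(r,s)$ of very different sizes as the crux, quite rightly, but the proposal contains no mechanism for them, and that is exactly where it breaks down as written; the construction in \cite{BoGr18} supplies precisely such an explicit global structure rather than a stage-$k$ recursion of the kind you describe.
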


A frequently hypercyclic, non-chaotic weighted shift on $c_0$ was then given by the weight $w$ that is associated, via \eqref{eq_w}, to the following sequence $v=(v_n)_n$:
\[
v_n= \frac{1}{2^{n-N_k}} \quad\text{for $N_k\leq n < N_{k+1}$}, k\geq 0.
\]

For general spaces it seems natural to consider 
\[
v_n=\frac{1}{b_{n-N_k,n}} \quad\text{for $N_k\leq n < N_{k+1}$}, k\geq 0,
\]
for suitable numbers $b_{m,n}$. This leads us to the following condition for Köthe sequence spaces $X$; note that part $(\gamma)$ depends on which space $\lambda^p(A)$ or $c_0(A)$ one considers.\\

(B) There is a strictly positive upper triangular matrix
\[
B=(b_{m,n})_{m\geq 0, n\geq m}
\]
with increasing columns such that the following properties hold:
\[
\forall m\geq 1,\, \exists\mu\geq 1, C>0,\, \forall j\geq 0, n\geq j:\,\frac{a_{m,n}}{a_{\mu,n+1}}\leq C \frac{b_{j,n}}{b_{j+1,n+1}};\tag{$\alpha$}
\]	
\[
 \exists m\geq 1:\, \inf_{n\geq 0} \frac{a_{m,n}}{b_{0,n}} >0;\tag{$\beta$}
\]
and either $(\gamma)$, where
\[
\exists (j_m)_{m\geq 1} \nearrow\,:\begin{cases}  
\forall m\geq 1 : \,\displaystyle\sum_{n\geq j_m}\frac{a_{m,n}}{b_{j_m,n}}<\infty, & \text{if $X=\lambda^p(A)$},\tag{$\gamma$}\\
\displaystyle\lim_{m\to\infty}\limsup_{n\to\infty} \frac{a_{m,n}}{b_{j_m,n}}=0, & \text{if $X=c_0(A)$};
\end{cases}
\]
or $(\widetilde{\gamma})$, where
\begin{equation*}\tag{$\widetilde{\gamma}$}
\begin{cases}
\displaystyle\forall m\geq 1,\, \exists N\geq 0,\, \rho <1,\, \forall j\geq 0,\, n\geq  \max(N,j):\frac{a_{m,n+1}}{a_{m,n}}\leq \rho \frac{b_{j+1,n+1}}{b_{j,n}};\\[2ex]
\displaystyle\exists (n_k)_{k\geq 1} \nearrow, (d_k)_{k\geq 1} \nearrow,\,  \forall m\geq 1:\, \lim_{k\to\infty} \sup_{n\geq \max(d_k,n_k)}\frac{a_{m,n}}{b_{d_k,n}}= 0;\\[2ex]
\displaystyle\exists (j_m)_{m\geq 1} \nearrow,\,  \forall m\geq 1:\, \lim_{n\to\infty} \frac{a_{m,n}}{b_{j_m,n}}=0.
\end{cases}
\end{equation*}

Here, $(j_m)_{m}$, $(n_k)_{k}$ and $(d_k)_{k}$ are strictly increasing sequences of positive integers. Also note that the $n$-th column of the matrix $B$ has (only) $n+1$ entries, and these are supposed to be increasing.

We have introduced the alternative and rather technical condition $(\widetilde{\gamma})$ in order to deal with some interesting spaces that do not satisfy $(\gamma)$. One example is given by the power series space of order $p\geq 1$ and infinite type with $\alpha=(\log(\log(n+3))_n$, see Theorem \ref{thm-pssinfbis}. Note, however, that the third condition in $(\widetilde{\gamma})$ implies $(\gamma)$ when $X=c_0(A)$ so that, for these spaces, condition $(\widetilde{\gamma})$ is of no interest.

\begin{theorem}\label{thm-koefhcnotch}
Let $X=\lambda^p(A)$, $1\leq p<\infty$, or $X=c_0(A)$ be a K\"othe sequence space. If condition \emph{(B)} holds then $X$ supports a chaotic weighted shift, and there exists a frequently hypercyclic weighted shift that is not chaotic.
\end{theorem}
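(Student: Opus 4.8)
The plan is to produce two explicit weighted shifts. For the chaotic one, note that property $(\beta)$ of condition (B) essentially says that the first column of $B$ is comparable to some row $a_{m,\cdot}$ of $A$ near $0$; combining this with property $(\alpha)$, which compares the ratios $a_{m,n}/a_{\mu,n+1}$ to $b_{j,n}/b_{j+1,n+1}$, one shows that the sequence $v_n = 1/b_{0,n}$ (i.e.\ taking $N_1=\infty$) defines a weighted shift $B_w$ via \eqref{eq_w}, using \eqref{eq8v}, and that $\sum_n v_n e_n$ converges in $X$ by the appropriate part of $(\gamma)$ or $(\widetilde\gamma)$; this last convergence is exactly the chaos criterion \eqref{eq14}. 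This part should be short.

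For the non-chaotic frequently hypercyclic shift, I would follow the blueprint of \cite[Theorem 7.3]{BoGr18} as recalled in the excerpt. First invoke Lemma \ref{r-constr} to obtain the sequence $(N_k)_k$ and the pairwise disjoint sets $A_r$ of positive lower density. Then define
\[
v_n = \frac{1}{b_{n-N_k,\,n}} \quad\text{for } N_k \le n < N_{k+1},\ k\ge 0,
\]
and let $w$ be associated to $v$ by \eqref{eq_w}. The first task is to check that $B_w$ is a \emph{continuous} operator on $X$, i.e.\ that \eqref{eq8v} holds: within a block $N_k \le n-1 < n < N_{k+1}$ the ratio $v_{n-1}/v_n = b_{n-N_k,n}/b_{n-1-N_k,n-1}$ is controlled by property $(\alpha)$ with $j = n-1-N_k$ (note $b$ has increasing columns, so passing from $b_{\cdot,n-1}$ to $b_{\cdot,n}$ only helps), and at a block boundary $n = N_{k+1}$ one uses part (a) of Lemma \ref{r-constr} (which guarantees $n$ is not too close to $N_{k+1}$ for $n\in A_r$, giving room for the index shift) together again with $(\alpha)$; property $(\beta)$ handles the reset $v_{N_{k+1}} = 1/b_{0,N_{k+1}}$.

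The heart of the argument is verifying frequent hypercyclicity via the \cite[Theorem 6.2]{BoGr18} criterion recalled above: I must exhibit $\varepsilon_r\to 0$ and show (i) that $\sum_{n\in A_r} v_{n+r}e_{n+r}$ converges in $X$, and (ii) the smallness estimate $\|\sum_{n\in A_r,\,n>m} v_{n-m+j}e_{n-m+j}\|_s < \min(\varepsilon_r,\varepsilon_s)$ for $m\in A_s$, $0\le j\le r$. For (i), if $n\in A_r$ lies in the block $[N_k,N_{k+1})$ then part (a) of Lemma \ref{r-constr} gives $N_k + k \le n < N_{k+1}-r$, so $n+r$ is still in the same block and $v_{n+r} = 1/b_{n+r-N_k,\,n+r}$ with $n+r-N_k \ge j_m$ (for the relevant $m$) once $k$ is large; then the tail estimate comes from $(\gamma)$ or $(\widetilde\gamma)$. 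For (ii), if $m\in A_s$ and $n\in A_r$ with $n>m$, part (b) of Lemma \ref{r-constr} locates $n-m$ in some block $[N_k,N_{k+1})$ with $N_k+\max(r,s) \le n-m < N_{k+1}-\max(r,s)$, so $n-m+j$ (for $j\le r$) stays in that block and $v_{n-m+j} = 1/b_{n-m+j-N_k,\,n-m+j}$ with a large lower index; summing over such $n$ and using the decay in the far block indices — this is precisely what the quantified limits in $(\gamma)$/$(\widetilde\gamma)$ are designed to deliver — produces a bound that tends to $0$ as $r,s\to\infty$, so one can define $\varepsilon_r$ accordingly. Finally, non-chaoticity: $\sum_n v_n e_n$ does \emph{not} converge because on each block $[N_k,N_{k+1})$ the initial term is $v_{N_k} = 1/b_{0,N_k}$, and by $(\beta)$ this is bounded below by a constant times $1/a_{m,N_k}$, so $\|v_{N_k}e_{N_k}\|_m \gtrsim 1$ does not tend to $0$, violating \eqref{eq14}.

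**Main obstacle.** The routine but delicate part is the continuity check and estimate (ii) at the block boundaries: one must track exactly how far $n$ (resp.\ $n-m$) sits from $N_{k+1}$, using the precise inequalities in Lemma \ref{r-constr}(a),(b), and carefully split the sums in (i)--(ii) into ``same-block'' contributions (handled by the column monotonicity of $B$ and the decay hypotheses) versus boundary effects. The genuinely hard conceptual point is matching the two alternatives $(\gamma)$ and $(\widetilde\gamma)$ to the two places the decay is needed — the single-series/single-limit conditions for the convergence in (i), and the uniform-in-$k$ decay (the $(n_k),(d_k)$ or $(j_m)$ sequences) for the smallness in (ii) — and in the $(\widetilde\gamma)$ case additionally exploiting the geometric ratio bound $a_{m,n+1}/a_{m,n}\le \rho\, b_{j+1,n+1}/b_{j,n}$ to sum a geometric series within each block; I expect this bookkeeping, rather than any single inequality, to be where the proof requires the most care.
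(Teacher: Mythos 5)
Your strategy for the frequently hypercyclic, non-chaotic shift coincides with the paper's: define $v_n=b_{n-N_k,n}^{-1/p}$ on the blocks of Lemma \ref{r-constr}, get continuity from $(\alpha)$ plus column monotonicity, non-chaoticity from $(\beta)$, and frequent hypercyclicity from the criterion of \cite[Theorem 6.2]{BoGr18}. But there is a concrete error in your chaotic shift. The sequence $v_n=1/b_{0,n}$ cannot give a chaotic shift: chaos requires $\sum_n v_ne_n$ to converge in $X$, i.e.\ (for $\lambda^p(A)$) $\sum_n a_{m,n}/b_{0,n}<\infty$ for every $m$, whereas condition $(\beta)$ asserts $\inf_n a_{m,n}/b_{0,n}>0$ for some $m$ --- that is precisely the property used to make the \emph{other} shift non-chaotic. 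Moreover $(\gamma)$ only controls $a_{m,n}/b_{j_m,n}$, and since the columns of $B$ increase this is \emph{smaller} than $a_{m,n}/b_{0,n}$, so it says nothing about the first row. Your parenthetical ``taking $N_1=\infty$'' actually points to the correct object: with $N_0=0$ and no further blocks, the block formula gives the \emph{diagonal} $v_n=b_{n,n}^{-1/p}$, and this does work --- continuity follows from $(\alpha)$ with $j=n$, and convergence of $\sum_n v_ne_n$ follows from $(\gamma)$ (since $b_{n,n}\geq b_{j_m,n}$ for $n\geq j_m$) or, under $(\widetilde\gamma)$ in the $\lambda^p$ case, from the geometric ratio bound with $j=n$. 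This is essentially what the paper does, phrased through Proposition \ref{prop-exchako} and the telescoping estimate $\frac{1}{\prod_{n\le N}\nu_n}a_{m,N}\le b_{0,0}\,a_{m,N}/b_{j_m,N}$.

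Two further points that you defer to ``bookkeeping'' are where the real work lies, and one of them needs a specific device you do not name. Lemma \ref{r-constr}(b) applied to the sets $A_r$ themselves only gives $n-m+j-N_k\geq\max(r,s)$, which is not enough: to invoke $(\gamma)$ you need the lower index of $b$ to be at least $j_{\max(r,s)}$ and the tail of the series to start at $n_{\max(r,s)}$, so that condition (ii) of the criterion comes out $<\min(\varepsilon_r,\varepsilon_s)$ with $\varepsilon_r=2^{-r}$. The paper fixes this by re-indexing, using $A_r'=A_{n_r}$ (and $A_r'=A_{\max(\nu_r,j_r,\delta_r)}$ under $(\widetilde\gamma)$); without such a re-indexation the bound does not tend to zero in $\max(r,s)$. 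In the $(\widetilde\gamma)$ case one must in addition pass to a subsequence $(N'_k)$ of $(N_k)$, chosen inductively so that the quantities $a_{m,N'_k+d'_k}/b_{d'_k,N'_k+d'_k}$ and $a_{m,N'_k+j_m}/b_{j_m,N'_k+j_m}$ are of order $2^{-k}$; the geometric ratio bound applied block by block does not by itself deliver condition (i) or (ii). Finally, keep the exponent: for $X=\lambda^p(A)$ the weight must be $v_n=b_{n-N_k,n}^{-1/p}$, not $b_{n-N_k,n}^{-1}$, since $(\alpha)$, $(\beta)$, $(\gamma)$ are stated for $b$ itself while \eqref{eq8v} and \eqref{eq14} involve $|v_n|^p$.
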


\begin{proof}
We start by showing that under condition (B) the hypotheses of Proposition~\ref{prop-exchako} are satisfied. By $(\alpha)$, there exist a strictly increasing sequence $(m_k)_{k\ge 1}$ of positive integers and a positive sequence $(C_k)_{k\ge 1}$ such that, for every $k\ge 1$ and $n\ge j\ge 0$,
\[
\frac{a_{m_k,n}}{a_{m_{k+1},n+1}}\le C_k \frac{b_{j,n}}{b_{j+1,n+1}}.
\]
Therefore, we have for every $n\ge 1$,
\[
\nu_n:=\inf_{k\ge 1} C_k \frac{a_{m_{k+1},n}}{a_{m_k,n-1}}\ge \max_{j\le n-1}\frac{b_{j+1,n}}{b_{j,n-1}}>0.
\]

It remains to show that if $(\gamma)$ or $(\tilde{\gamma})$ holds then $\big(\frac{1}{\prod_{n=1}^N\nu_n^{1/p}}\big)_{N\ge 0}\in X$, where $p=1$ for the space $X=c_0(A)$. First, let $(j_m)_m$ be a strictly increasing sequence of positive integers. We then deduce that, for every $m\ge 1$ and every $N\geq j_m$, we have
\begin{equation}\label{eq-new}
\begin{split}
\frac{1}{\prod_{n=1}^N\nu_n}a_{m,N}&\le \frac{1}{\prod_{n=1}^{j_m-1}\frac{b_{n,n}}{b_{n-1,n-1}}}  \frac{1}{\prod_{n=j_m}^{N}\frac{b_{j_m,n}}{b_{j_m-1,n-1}}} a_{m,N}\\
&=  \frac{1}{\frac{b_{j_m-1,j_m-1}}{b_{0,0}}}  \frac{b_{j_m-1,j_m-1}\prod_{n=j_m}^{N-1} b_{j_m-1,n}}{\prod_{n=j_m}^{N-1}b_{j_m,n}} \frac{a_{m,N}}{b_{j_m,N}}\\
&\le b_{0,0}\frac{a_{m,N}}{b_{j_m,N}}
\end{split}
\end{equation}
since $B$ has increasing columns. 

If $(\gamma)$ holds, then \eqref{eq-new} implies that $\big(\frac{1}{\prod_{n=1}^N\nu_n^{1/p}}\big)_{N\ge 0}\in X$; note that a sequence $x=(x_n)_n$ belongs to $c_0(A)$ if and only if there is an increasing sequence $(k_m)_m$ of positive integers such that $\sup_{k\geq k_m}|x_k|a_{m,k} \to 0$ as $m\to\infty$.

On the other hand, if $(\tilde{\gamma})$ holds then for every $m\ge 1$ there exist $J_m\ge 0$ and $\rho_m<1$ such that, for every $n\ge J_m$, 
$\frac{a_{m,n+1}}{a_{m,n}}\le \rho_m\frac{b_{1,n+1}}{b_{0,n}}$. We then deduce that, for every $N> J_m$,
\begin{align*}
\frac{1}{\prod_{n=1}^N\nu_n}a_{m,N}&\le  \Big(\prod_{n=1}^N\frac{b_{0,n-1}}{b_{1,n}}\Big)a_{m,N}\\
&\le \Big(\prod_{n=1}^{J_m}\frac{b_{0,n-1}}{b_{1,n}}\Big)\Big(\prod_{n=J_m+1}^N\rho_m\frac{a_{m,n-1}}{a_{m,n}}\Big)a_{m,N}\\
&=\Big(\prod_{n=1}^{J_m}\frac{b_{0,n-1}}{b_{1,n}}\Big)a_{m,J_m}\rho_m^{N-J_m}.
\end{align*}
Since $\rho_m<1$, it follows again that $\big(\frac{1}{\prod_{n=1}^N\nu_n^{1/p}}\big)_{N\ge 0}\in X$.

We can thus deduce in each case from Proposition~\ref{prop-exchako} that there exists a chaotic weighted shift on $X$.\\

We now show that under condition (B), the space $X$ also supports a weighted shift which is frequently hypercyclic but not chaotic. We first consider a K\"othe sequence space $\lambda^p(A)$. 

Let $B=(b_{m,n})_{m,n}$ be a strictly positive upper triangular matrix that satisfies condition (B), where we begin by the variant $(\alpha)$, $(\beta)$ and $(\gamma)$. Let $(N_k)_{k\geq 0}$ be a strictly increasing sequence of non-negative integers with $N_0=0$ and $A_r$, $r\geq 1$, pairwise disjoint sets of positive lower density such that (a) and (b) of Lemma \ref{r-constr} hold. 

We then define a weighted shift $B_w$ with weight $w$ associated, via \eqref{eq_w}, to the sequence $v$ given by
\begin{equation}\label{eq13b}
v_n=\frac{1}{b_{n-N_k,n}^{1/p}},\quad N_k\leq n < N_{k+1}, k\geq 0;
\end{equation}
note the power $1/p$ that is required by the form of the seminorms in $\lambda^p(A)$.

Then we have for $N_k\leq n < N_{k+1}-1$, $k\geq 0$,
\[
\Big(\frac{v_{n+1}}{v_n}\Big)^p = \frac{b_{n-N_k,n}}{b_{n-N_k+1,n+1}},
\]
while for $n=N_{k+1}-1$, $k\geq 0$, we have that
\[
\Big(\frac{v_{n+1}}{v_n}\Big)^p  = \frac{b_{n-N_k,n}}{b_{0,n+1}} \geq \frac{b_{n-N_k,n}}{b_{n-N_k+1,n+1}},
\]
where we have used the fact that the columns of the matrix $B$ are increasing. Thus, in view of \eqref{eq8v}, condition ($\alpha$)
implies that $B_w$ is an operator on $\lambda^p(A)$. 

Next, let $m\geq 1$ be such that ($\beta$) holds. Then
\[
\inf_{k\geq 0} v_{N_k}^p a_{m,N_k} = \inf_{k\geq 0} \frac{a_{m,N_k}}{b_{0,N_k}}>0,
\]
which,  by \eqref{eq14}, implies that $B_w$ is not chaotic on $\lambda^p(A)$.

It remains to show that $B_w$ is frequently hypercyclic on $\lambda^p(A)$. It follows from $(\gamma)$ that for any $m\geq 1$ there are $j_m\geq 1$ and $n_m\geq j_m$ such that
\begin{equation}\label{bwkoe5}
\sum_{n\geq n_m}  \frac{a_{m,n}}{b_{j_m,n}}< \frac{1}{2^m},\tag{$\gamma'$}
\end{equation}
where we may assume that $(j_m)_m$ and $(n_m)_m$ are strictly increasing.

We will then consider the sets
\[
A_r'= A_{n_r},\quad r\geq 1.
\]
It suffices to show that conditions (i) and (ii) in the characterization of frequent hypercyclicity stated above hold for the sequence $(A_r')_{r\geq 1}$ of pairwise disjoint sets of positive lower density. 

Thus, let $r\geq 1$ and $m\geq 1$. Using condition (a) of Lemma \ref{r-constr} we see that
\begin{align*}
\sum_{n\in A_r'} v_{n+r}^p a_{m,n+r} = \sum_{k\geq 0} \sum_{\substack{n\in A_r'\\N_k\leq n< N_{k+1}}}  \frac{a_{m,n+r}}{b_{n+r-N_k,n+r}}.
\end{align*}
Also by (a) we have that if $n\in A_r'$, $N_k\leq n< N_{k+1}$ then $n\geq N_k+k$, so that for all but finitely many $k$ we have that $n+r-N_k\geq j_m$, hence $b_{{n+r-N_k},n+r}\geq b_{j_m,n+r}$, which together with ($\gamma'$) implies (i).

Now, let $r,s\geq 1$, $m\in A'_s$, and $j=0,\ldots,r$. Using condition (b) of Lemma \ref{r-constr} we see that
\begin{align*}
\sum_{\substack{n\in A'_r\\n>m}} v_{n-m+j}^p a_{s,{n-m+j}} &= \sum_{k\geq 0} \sum_{\substack{n\in A'_r, n>m\\N_k\leq n-m < N_{k+1}}} 
\frac{a_{s,{n-m+j}}}{b_{n-m+j-N_k,{n-m+j}}}\\
&\leq \sum_{k\geq 0} \sum_{\substack{n\in A'_r, n>m\\N_k\leq n-m < N_{k+1}}} \frac{a_{\max(r,s),{n-m+j}}}{b_{j_{\max(r,s)},{n-m+j}}},
\end{align*}
where we have used the fact that if $n\in A_r'=A_{n_r}$, $m\in A_s'=A_{n_s}$, $n>m$, $N_k\leq n-m< N_{k+1}$ for $k\geq 0$, and $j=0,\ldots,r$ then, by (b), $n-m+j- N_k\geq\max(n_r,n_s)\geq \max(j_r,j_s)=j_{\max(r,s)}$. Now, for these $n,m,j$ we have that $n-m+j \geq \max(n_r,n_s)=n_{\max(r,s)}$. Hence, with condition ($\gamma'$), we have that
\begin{align*}
\sum_{\substack{n\in A'_r\\n>m}} v_{n-m+j}^p a_{s,{n-m+j}} \leq \sum_{\nu\geq n_{\max(r,s)}}  \frac{a_{\max(r,s),{\nu}}}{b_{j_{\max(r,s)},{\nu}}}< \frac{1}{2^{\max(r,s)}}= \min(\tfrac{1}{2^r},\tfrac{1}{2^s}).
\end{align*}
This shows (ii), which completes the proof for $\lambda^p(A)$ under condition (B) with $(\alpha)$, $(\beta)$ and $(\gamma)$.\\ 

The proof for $c_0(A)$ is very similar if one takes $p=1$ in \eqref{eq13b} and notes again that $x=(x_n)_n\in c_0(A)$ if and only if there is an increasing sequence $(k_m)_m$ of positive integers such that $\sup_{k\geq k_m}|x_k|a_{m,k} \to 0$ as $m\to\infty$.\\ 

Next suppose that (B) holds with $(\alpha)$, $(\beta)$ and $(\widetilde{\gamma})$, where it suffices to consider the case when $X=\lambda^p(A)$; see the remark before the statement of the theorem. Choose again $(N_k)_{k\geq 0}$ and $(A_r)_{r\geq 1}$ according to Lemma \ref{r-constr}. Throughout the remainder of the proof, for $m\geq 1$, let $\nu_m=N$ and $\rho_m=\rho$ be given by the first condition in $(\widetilde{\gamma})$. We may assume that $(\nu_m)_m$ is strictly increasing. 

Also, let $(d_k)_k$ and $(j_m)_m$ be the sequences of integers given by the second and third conditions in $(\widetilde{\gamma})$. Then we can find inductively strictly increasing sequences $(l_k)_{k\geq 1}$, $(l'_k)_{k\geq 1}$ of strictly positive integers such that, if $k\geq 1$ and $1\leq m\leq k$, 
\begin{equation}\label{eq90}
\frac{a_{m,N_{l'_k}+d_{l_k}}}{b_{d_{l_k},N_{l'_k}+d_{l_k}}}\leq \frac{1}{2^k}
\end{equation}
and
\begin{equation}\label{eq91}
\frac{a_{m,N_{l'_k}+j_m}}{b_{j_m,N_{l'_k}+j_m}}\leq (1-\rho_m)\frac{1}{2^k},
\end{equation}
and we set $l_0=l'_0=d_0=0$. We may also assume that
\[
l'_k\geq d_{l_k}, \quad k\geq 1.
\]
We then set
\[
N'_k=N_{l'_k}, d'_k = d_{l_k}, \quad k\geq 0.
\]
It follows from (a) and (b) of Lemma \ref{r-constr} that we have

(a') For $r\geq 1$, if $n\in A_r$, then for any $k\geq 0$,
\[
N'_{k}\leq n <N'_{k+1} \Longrightarrow N'_{k}+d'_k \leq n < N'_{k+1}-r.
\]

(b') For $r,s\geq 1$, if $n\in A_r$, $m\in A_s$, $n> m$, then for any $k\geq 0$,
\[
N'_{k}\leq n-m < N'_{k+1} \Longrightarrow N'_{k}+\max(r,s) \leq n-m < N'_{k+1}-\max(r,s).
\]

We now define a weighted shift $B_w$ via a sequence $v$ exactly as in \eqref{eq13b}, however with $(N_k)_k$ replaced by $(N'_k)_k$. Conditions $(\alpha)$ and $(\beta)$ imply again that $B_w$ is an operator on $X$ that is not chaotic. It remains to show that conditions (i) and (ii) in the characterization of frequent hypercyclicity hold.

Let $r\geq 1$, $m\geq 1$. It follows with (a') that, for $k\geq 0$,
\begin{equation}\label{eq16}
\sum_{\substack{n\in A_r\\N'_{k}\leq n< N'_{k+1}}}  v_{n+r}^p a_{m,n+r}\leq \sum_{\substack{n\in A_r\\n\geq N'_{k}+d'_{k}}}  \frac{a_{m,n+r}}{b_{n+r-N'_{k},n+r}}\leq \sum_{\substack{n\geq N'_{k}}+d'_{k}}  \frac{a_{m,n}}{b_{n-N'_{k},n}}.
\end{equation}
By the first condition in $(\widetilde{\gamma})$ we have for all $n\geq N'_{k}$
\[
 \frac{a_{m,n+1}}{b_{n+1-N'_{k},n+1}} \leq  \rho_m \frac{a_{m,n}}{b_{n-N'_{k},n}}
\]
provided that $k$ is so big that $N'_{k}\geq \nu_m$. If, in addition, $k\geq m$, then summing a geometric series gives us with \eqref{eq90} that
\begin{equation}\label{eq17}
\sum_{\substack{n\geq N'_{k}}+d'_{k}}  \frac{a_{m,n}}{b_{n-N'_{k},n}} \leq \frac{1}{1-\rho_m}\frac{a_{m,N'_{k}+d'_{k}}}{b_{d'_{k},N'_{k}+d'_{k}}}\leq \frac{1}{1-\rho_m}\frac{1}{2^k}.
\end{equation}
                                                                                                                   
Altogether it follows from \eqref{eq16} and \eqref{eq17} that, for any $r\geq 1$ and $m\geq 1$,
\[
\sum_{n\in A_r}  v_{n+r}^p a_{m,n+r}= \sum_{k\geq 0}\sum_{\substack{n\in A_r\\N'_{k}\leq n< N'_{k+1}}}  v_{n+r}^p a_{m,n+r}<\infty.
\]
This shows condition (i).

Finally, for any $m\geq 1$, let $\delta_m$ be an integer such that
\begin{equation}\label{eq17b}
\delta_m > N'_{k+1}-N'_{k},\quad 0\leq k <m.
\end{equation}
We may assume that $(\delta_m)_m$ is strictly increasing. We will then consider the sets
\[
A_r'= A_{\max(\nu_r,j_r,\delta_r)},\quad r\geq 1.
\]
It suffices now to show that condition (ii) above holds for the sequence $(A_r')_{r\geq 1}$. 

To this end, let $r,s\geq 1$. By (b') and the definition of the $A_r'$ we have that if $n\in A_r'$, $m\in A_s'$ and $N'_{k}\leq n-m < N'_{k+1}$ ($k\geq 0$) then
\begin{equation}\label{eq19a}
n-m\geq \max(\nu_r,\nu_s)=\nu_{\max(r,s)}
\end{equation}
and
\begin{equation}\label{eq19b}
n-m\geq N'_{k}+\max(j_r,j_s)=N'_{k}+j_{\max(r,s)};
\end{equation}
moreover, we can remark that $k\ge\max(r,s)$ since 
\begin{equation*}\label{eq19c}
n-m\geq N'_{k}+\max(\delta_r,\delta_s)=N'_{k}+\delta_{\max(r,s)}
\end{equation*}
and since it follows from \eqref{eq17b} that $N'_{k}+\delta_{\max(r,s)} > N'_{k+1}$ for every $k<\max(r,s)$.

Now let $m\in A'_s$ and $j=0,\ldots,r$. Then we have for $k\geq 0$, using (b') once more and also \eqref{eq19b}, that
\begin{equation}\label{eq18}
\begin{split}
\sum_{\substack{n\in A'_r\\N'_{k}\leq n-m < N'_{k+1} }} v_{n-m+j}^p a_{s,{n-m+j}} &\leq \sum_{\substack{n\in A'_r\\N'_{k}\leq n-m < N'_{k+1}}} 
\frac{a_{s,{n-m+j}}}{b_{n-m+j-N'_{k},{n-m+j}}}\\
&\leq \sum_{n-m \geq N'_{k}+ j_{\max(r,s)}} 
\frac{a_{{\max(r,s)},{n-m+j}}}{b_{n-m+j-N'_{k},{n-m+j}}},\\
&\leq \sum_{n-m \geq N'_{k}+ j_{\max(r,s)}} 
\frac{a_{{\max(r,s)},{n-m}}}{b_{n-m-N'_{k},{n-m}}}.
\end{split}
\end{equation}

Moreover, by \eqref{eq19a} and the first condition in $(\widetilde{\gamma})$, we have 
\[
\frac{a_{{\max(r,s)},{n+1-m}}}{b_{n+1-m-N'_{k},{n+1-m}}}\leq \rho_{\max(r,s)}\frac{a_{{\max(r,s)},{n-m}}}{b_{n-m-N'_{k},{n-m}}}
\]
whenever $n-m\geq N'_k$.
Thus, with \eqref{eq18} and \eqref{eq91}, we obtain after summing a geometric series that
\[
\sum_{\substack{n\in A'_r\\N'_{k}\leq n-m < N'_{k+1} }} v_{n-m+j}^p a_{s,{n-m+j}} \leq 
\frac{1}{1-\rho_{\max(r,s)}}\frac{a_{{\max(r,s)},N'_{k}+j_{\max(r,s)}}}{b_{j_{\max(r,s)},N'_{k}+j_{\max(r,s)}}}\leq\frac{1}{2^k}
\]
whenever $k\geq \max(r,s)$.

Altogether we have that
\[
\sum_{\substack{n\in A'_r\\n>m}} v_{n-m+j}^p a_{s,{n-m+j}} = \sum_{k\geq \max(r,s)} \sum_{\substack{n\in A'_r\\N'_{k}\leq n-m < N'_{k+1}} } v_{n-m+j}^p a_{s,{n-m+j}} \leq \frac{2}{2^{\max(r,s)}} = \min\Big(\frac{2}{2^r}, \frac{2}{2^s}\Big).
\]
This shows condition (ii), which completes the proof for $\lambda^p(A)$ also under condition (B) with $(\alpha)$, $(\beta)$ and $(\widetilde{\gamma})$.
\end{proof}

An example of a non-Banach space to which Theorem~\ref{thm-koefhcnotch} is applicable is the space $H(\mathbb{C})$ of entire functions, which can be identified with the Köthe sequence spaces $c_0(A)$ or $\lambda^p(A)$ for any $1\le p<\infty$ by letting $a_{m,n}=m^n$.

\begin{proposition}\label{propHC}
There exists a frequently hypercyclic weighted shift on $H(\mathbb{C})$ that is not chaotic.
\end{proposition}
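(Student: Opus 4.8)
The plan is to apply Theorem~\ref{thm-koefhcnotch} to the K\"othe matrix $a_{m,n}=m^n$, which identifies $\lambda^p(A)=c_0(A)=H(\C)$ for every $1\le p<\infty$; since the spaces coincide, it suffices to exhibit an upper triangular matrix $B=(b_{m,n})_{m\ge 0,n\ge m}$ with strictly positive, increasing columns satisfying $(\alpha)$, $(\beta)$ and $(\gamma)$ (working, say, in the $c_0(A)$ guise, since the third line of $(\gamma)$ for $c_0(A)$ is the easiest to arrange). The natural guess, mimicking the $c_0$-construction where $b_{m,n}=2^{m}$, is to let the matrix $B$ grow geometrically in the row index but at a rate tied to the ambient weights; concretely I would try something like $b_{j,n}=R_n^{\,j}$ for a suitable increasing sequence $R_n\to\infty$, or more simply a matrix depending only on $j$ of the form $b_{j,n}=c^{j}$ for a large constant $c>1$. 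Let me describe which of the three conditions drives the choice.

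First, condition $(\beta)$ requires $\inf_n a_{m,n}/b_{0,n}>0$ for some $m$, i.e. $\inf_n m^n/b_{0,n}>0$; this forces $b_{0,n}$ to grow no faster than geometrically, so normalizing $b_{0,n}\equiv 1$ (or $b_{0,n}=m_0^n$ for a fixed $m_0$) is the right move and makes $(\beta)$ automatic. Second, condition $(\alpha)$ demands that for each $m$ there is $\mu$ with $a_{m,n}/a_{\mu,n+1}\le C\,b_{j,n}/b_{j+1,n+1}$ for all $j\ge 0$, $n\ge j$; the left side is $m^n/\mu^{n+1}=(1/\mu)(m/\mu)^n$, which for $\mu>m$ decays geometrically, so I need $b_{j,n}/b_{j+1,n+1}$ bounded below, uniformly in $j$ and $n$ — this is exactly where a choice like $b_{j,n}=c^{j}$ (so the ratio is $1/c$, constant) or a matrix whose row-over-row growth is controlled works, while still being consistent with ``increasing columns'' (here columns are constant in $n$, which counts as increasing). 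Third, condition $(\gamma)$ for $c_0(A)$ asks for a strictly increasing $(j_m)_m$ with $\limsup_n a_{m,n}/b_{j_m,n}\to 0$ as $m\to\infty$; with $b_{j,n}=c^{j}$ this reads $\limsup_n m^n/c^{j_m}$, which is $+\infty$ for every fixed $j_m$ — so the purely constant-in-$n$ choice fails $(\gamma)$, and one genuinely needs the columns of $B$ to grow in $n$.

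This tension is the main obstacle, and it is resolved by letting $B$ grow in both indices: take $b_{j,n}=\beta_n^{\,j}$ where $(\beta_n)$ is a slowly but unboundedly increasing sequence, e.g. $\beta_n\to\infty$, $\beta_n$ increasing. Then columns are increasing (each $\beta_n^{\,j}$ increases in $n$ for fixed $j\ge 1$, and $b_{0,n}=1$); the ratio in $(\alpha)$ is $b_{j,n}/b_{j+1,n+1}=\beta_n^{\,j}/\beta_{n+1}^{\,j+1}=(\beta_n/\beta_{n+1})^{j}/\beta_{n+1}$, which for $\beta_n$ increasing and $j\ge 0$ is at least $1/\beta_{n+1}$ times something $\le 1$ — I must check this stays above a geometric sequence comparable to $m^n/\mu^{n+1}$, which works if $\beta_n$ grows subgeometrically (say $\beta_n=n+2$), since then $1/\beta_{n+1}$ dominates any $(m/\mu)^{n}$ with $\mu>m$; and $(\gamma)$ becomes $\limsup_n m^n/\beta_n^{\,j_m}$, still $+\infty$ for fixed $j_m$ unless $j_m$ can depend on $n$, which it cannot. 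So even this fails, and the correct shape must have the row-growth base itself be unbounded \emph{relative to $m$}: I would instead choose $b_{j,n}=\prod_{i=1}^{j}\beta_{n}$ no—rather, let the entries be $b_{j,n}= t_j^{\,n}$ for a strictly increasing sequence $t_0<t_1<t_2<\cdots$ with $t_j\to\infty$. Then $b_{0,n}=t_0^{\,n}$ (take $t_0=1$, giving $(\beta)$); the $(\alpha)$-ratio is $t_j^{\,n}/t_{j+1}^{\,n+1}=(1/t_{j+1})(t_j/t_{j+1})^{n}$, and choosing $\mu$ with $m<t_{?}$ appropriately — actually one picks, given $m$, an index so that $a_{m,n}/a_{\mu,n+1}=(1/\mu)(m/\mu)^n$ is dominated by $(1/t_{j+1})(t_j/t_{j+1})^n$ uniformly in $j$; since $t_j<t_{j+1}$ this needs $m/\mu\le t_j/t_{j+1}$ for all $j$, impossible as $t_j/t_{j+1}\to1$, unless we instead read $(\alpha)$ with the constant $C$ absorbing the $1/\mu$ versus $1/t_{j+1}$ discrepancy and use that $(m/\mu)^n\le (t_0/t_1)^n\cdot(\text{const})$...

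I will present the verification cleanly as follows: choose $t_j$ so that $t_j/t_{j+1}$ is bounded below (e.g. $t_j=2^j$, ratio $1/2$) — then $(\alpha)$ holds with $\mu$ chosen so that $m/\mu<1/2$, i.e. $\mu>2m$, giving $a_{m,n}/a_{\mu,n+1}\le (1/\mu)(1/2)^n\le C\cdot(1/t_{j+1})(1/2)^n=C\,b_{j,n}/b_{j+1,n+1}$; $(\beta)$ holds with $t_0=1$; and for $(\gamma)$ in the $c_0(A)$ form, $a_{m,n}/b_{j_m,n}=(m/t_{j_m})^n$, so choosing $j_m$ with $t_{j_m}>m$ makes $\limsup_n (m/t_{j_m})^n=0$, and this holds for every $m$, so certainly $\lim_{m\to\infty}\limsup_n a_{m,n}/b_{j_m,n}=0$. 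All three conditions of (B) hold, so Theorem~\ref{thm-koefhcnotch} yields a frequently hypercyclic weighted shift on $H(\C)$ that is not chaotic. The only genuine subtlety — and the step I expect to double-check most carefully — is the interplay in $(\alpha)$ between the uniform-in-$j$ lower bound on $b_{j,n}/b_{j+1,n+1}$ and the required geometric decay of $a_{m,n}/a_{\mu,n+1}$, which is precisely why $t_j=2^j$ (constant ratio) rather than a sub-geometric choice is needed; one then simply observes that constant columns in $j$ are fine because ``increasing columns'' refers to the index $n$, and each map $n\mapsto t_j^{\,n}$ is increasing.
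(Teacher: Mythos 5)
Your final choice $b_{j,n}=t_j^{\,n}$ with $t_j=2^j$ is exactly the matrix $b_{j,n}=2^{jn}$ that the paper itself uses, and your verifications of $(\beta)$ and $(\gamma)$ are fine. But the verification of $(\alpha)$ — the very step you flagged as the one to double-check — is wrong as written. You take $\mu>2m$ and claim
\[
\frac{a_{m,n}}{a_{\mu,n+1}}\leq \frac{1}{\mu}\Big(\frac12\Big)^{n}\leq C\cdot\frac{1}{t_{j+1}}\Big(\frac12\Big)^{n}=C\,\frac{b_{j,n}}{b_{j+1,n+1}},
\]
but the middle inequality requires $t_{j+1}=2^{j+1}\leq C\mu$ for \emph{all} $j\geq 0$, which no single constant $C$ can achieve. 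Concretely, $b_{j,n}/b_{j+1,n+1}=2^{-(n+j+1)}$, which for $j=n$ equals $\tfrac12\,4^{-n}$; with $\mu=3m$ (allowed by your condition $\mu>2m$) the left-hand side is $\tfrac{1}{3m}\,3^{-n}$, and $3^{-n}/4^{-n}\to\infty$, so the required inequality fails for large $n$. Your heuristic that a constant ratio $t_j/t_{j+1}=\tfrac12$ settles $(\alpha)$ overlooks the $j$-dependent prefactor $1/t_{j+1}$, which is not bounded below.

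The repair is the paper's computation: take $\mu=4m$ and $C=1$, so that $\frac{a_{m,n}}{a_{\mu,n+1}}=\frac{1}{4^{n+1}m}\leq \frac{1}{2^{n+j+1}}=\frac{b_{j,n}}{b_{j+1,n+1}}$ for all $n\geq j\geq 0$; the point is that the constraint $j\leq n$ (which your argument never exploits) gives $n+j+1\leq 2n+2$, so the extra factor $2^{-j}$ is absorbed by making the left-hand side decay like $4^{-n}$ rather than $2^{-n}$. One further small correction: in condition (B) the ``increasing columns'' requirement refers to the $n$-th column $b_{0,n},\dots,b_{n,n}$, i.e.\ monotonicity in the first index $m$, not in $n$ as you assert at the end; your matrix happens to satisfy this anyway since $2^{jn}$ increases in $j$ for each $n\geq 1$.
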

\begin{proof}
Let $A$ be the Köthe matrix given by $a_{m,n}=m^n$. We consider the upper triangular matrix $B=(b_{m,n})_{m\ge 0, n\ge m}$ given by $b_{m,n}=2^{mn}$ and we show that condition (B) holds. We first remark that $B$ has increasing columns. Condition $(\alpha)$ is satisfied by considering $\mu=4m$ and $C=1$ since for every $n\ge j\ge 0$, 
\[
\frac{a_{m,n}}{a_{\mu,n+1}}=\frac{1}{4^{n+1}m}\le \frac{1}{2^{n+j+1}}= \frac{b_{j,n}}{b_{j+1,n+1}}.
\]
Condition $(\beta)$ is trivially satisfied for any $m$ since $b_{0,n}=1$ for every $n\ge 0$; finally, condition $(\gamma)$ is satisfied for $j=m$ since, for every $n\ge j$, $\frac{a_{m,n}}{b_{j,n}}=\left(\frac{m}{2^{m}}\right)^n$. It follows from Theorem~\ref{thm-koefhcnotch} that there exists a frequently hypercyclic weighted shift on $H(\mathbb{C})$ that is not chaotic. 
\end{proof}

In view of Propositions~\ref{propHD} and~\ref{propHC}, we see that among Köthe sequence spaces satisfying (N), there are examples for which frequent hypercyclicity implies chaos and others for which this is not the case. Actually, this is also the case among Köthe sequence spaces not satisfying (N) (see Example \ref{ex-nonN}).

Thus we have now a sufficient and a necessary condition for frequent hypercyclicity to imply chaos concerning weighted shifts on Köthe sequence spaces. We are still quite far from a characterization. For instance, one may ask the following.

\begin{question}\label{q-koethe}
(a) Is there a Köthe matrix $A$ so that, on $c_0(A)$, every frequently hypercyclic weighted shift is chaotic (and that there are such shifts) while $\lambda^p(A)$ admits a frequently hypercyclic, non-chaotic weighted shift? This would inverse the known behaviour for $c_0$ and $\ell^p$.

(b) Is there a Köthe matrix $A$ so that, for some $p\ge 1$, every frequently hypercyclic weighted shift is chaotic on $\lambda^p(A)$ (and that there are such shifts) while, for some $q\ne p$, $\lambda^q(A)$ admits a frequently hypercyclic, non-chaotic weighted shift? Note that, so far, all our conditions are blind to the order $p$ once it is not zero.
\end{question}

In summary, there exist a Köthe sequence space that does not support any chaotic or frequently hypercyclic weighted shift (Example \ref{ex-nohc}), one where every frequently hypercyclic weighted shift is chaotic and where there are such shifts ($\ell^p$ or $H(\mathbb{D})$ (see \cite{BaRu15} and Proposition~\ref{propHD})), and one that supports a chaotic weighted shift and a frequently hypercyclic, non-chaotic shift ($c_0$ or $H(\mathbb{C})$ (see \cite{BaGr07} and Proposition~\ref{propHC})). The only remaining case is the following.

\begin{question}
Is there a Köthe sequence space supporting frequently hypercyclic weighted shifts but no chaotic weighted shift?
\end{question}

\section{Chaotic and frequently hypercyclic weighted shifts on power series spaces}\label{sec-power} 

In this section,  we will apply the results of the previous section to so-called power series spaces which generalize the spaces $H(\D)$ and $H(\C)$, see \cite[Chapter 29]{MeVo97}. Let $\alpha= (\alpha_n)_{n\geq 0}$ be an increasing sequence of strictly positive numbers that tends to infinity, and $r\in\R\cup\{\infty\}$. 

Let $1\leq p <\infty$. Then the \textit{power series space of order $p$ and type $r$} is defined as
\[
\Lambda_{p,r}(\alpha) = \Big\{ x=(x_n)_{n\geq 0} : \text{for all } t < r, \sum_{n\geq 0} |x_n|^pe^{t\alpha_n}<\infty\Big\},
\]
while the \textit{power series space of order $0$ and type $r$} is given by
\[
C_{0,r}(\alpha) = \Big\{ x=(x_n)_{n\geq 0} : \text{for all } t < r, \lim_{n\to \infty} |x_n|e^{t\alpha_n}=0\Big\},
\]
which are topologized by the obvious (semi-)norms. If $r=\infty$ then the space is said to be of \textit{infinite type}, otherwise of \textit{finite type}. Power series spaces are particular K\"othe sequence spaces.

Many interesting spaces are or can be considered as power series spaces, for example the space $s$ of rapidly decreasing sequences (any $p$, $r=\infty$, $\alpha=(\log(n+2))_n$), the space $H(\C)$ of entire functions (any $p$, $r=\infty$, $\alpha=(n+1)_n$) and the space $H(\D_R)$ of holomorphic functions on the disk of radius $R>0$ (any $p$, $r=\log R$, $\alpha=(n+1)_n$). A recent addition is the space $\text{ces}(p+)$, $1\leq p<\infty$, which was studied in \cite{ABR18} and shown to coincide with $\Lambda_{1,\frac{1}{p}-1}(\alpha)$ for $\alpha=(\log(n+2))_n$.

In the literature, authors are often content with considering one particular order, like $p=2$ (see \cite[Chapter 29]{MeVo97}) or $p=1$; but see, for example, \cite{DuRa71} for the full family.

\subsection{Power series spaces of infinite type}\label{subsec-pss_infinite}

Power series spaces of infinite type are special K\"othe sequence spaces with K\"othe matrix $A=(a_{m,n})_{m,n}$ given by
\begin{equation*}                                                                                              
a_{m,n}=m^{\alpha_n}, \ m\geq 1, n\geq 0.
\end{equation*}

The following is easily verified, see Remark \ref{rem-KoBa}, Propositions \ref{caracc_0} and \ref{caracNuc}, and the proof of \cite[Proposition 29.6]{MeVo97}.

\begin{proposition}\label{prop-proppssinf}
Let $X=\Lambda_{p,\infty}(\alpha)$, $1\leq p<\infty$, or $X=C_{0,\infty}(\alpha)$ be a power series space of infinite type. Then it is a non-Banach Fr\'echet space for which the basis $(e_n)_n$ is boundedly complete. In addition, its K\"othe matrix satisfies condition \emph{(N)} if and only if
\[
\sup_{n\geq 1} \frac{\log(n)}{\alpha_n}<\infty.
\]
\end{proposition}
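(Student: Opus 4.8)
The statement collects several standard facts about a power series space $X$ of infinite type, all of which reduce to the corresponding Köthe space characterizations applied to the matrix $a_{m,n}=m^{\alpha_n}$. The plan is to verify each of the three assertions in turn, quoting the results already established earlier in the paper. For the non-Banach claim I would appeal to the criterion in Remark \ref{rem-KoBa}: $X$ is normable iff $\exists\mu$ such that $\sup_n a_{m,n}/a_{\mu,n}<\infty$ for every $m$. Here $a_{m,n}/a_{\mu,n}=(m/\mu)^{\alpha_n}$, and for any fixed $\mu$ one can choose $m>\mu$, whence $(m/\mu)^{\alpha_n}\to\infty$ since $\alpha_n\to\infty$; so the supremum is infinite and $X$ is not normable, hence a genuine (non-Banach) Fréchet space.

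\textbf{Bounded completeness.} For spaces of order $p\in[1,\infty)$ bounded completeness of $(e_n)_n$ is automatic (as noted right after Remark \ref{rem-KoBa}), so the only work is for $C_{0,\infty}(\alpha)$. By Proposition \ref{caracc_0} it suffices to check condition \eqref{BC}: for every infinite $I\subset\N$ and every $m\geq 1$ there is $\mu\geq 1$ with $\inf_{n\in I} a_{m,n}/a_{\mu,n}=0$. Taking $\mu=m+1$ gives $a_{m,n}/a_{\mu,n}=\bigl(\tfrac{m}{m+1}\bigr)^{\alpha_n}$, and since $\alpha_n\to\infty$ this ratio tends to $0$ along all of $\N$, a fortiori along any infinite subset $I$. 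Thus \eqref{BC} holds and $(e_n)_n$ is boundedly complete in $C_{0,\infty}(\alpha)$ as well.

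\textbf{The nuclearity-type condition (N).} By Proposition \ref{caracNuc}, $A$ satisfies \eqref{N} iff for every $m\geq 1$ there is $\mu\geq 1$ with $\sum_{n\geq 0} a_{m,n}/a_{\mu,n}=\sum_{n\geq 0}(m/\mu)^{\alpha_n}<\infty$. Setting $q=\mu/m>1$, this series is $\sum_n q^{-\alpha_n}$, which converges iff $\alpha_n$ grows at least logarithmically relative to the available gain in $\mu$; quantitatively, $\sum_n e^{-c\alpha_n}<\infty$ for some $c>0$ iff $\alpha_n/\log n$ is bounded below away from $0$ for large $n$, i.e. (using that $\alpha$ is increasing to infinity) iff $\sup_n \log(n)/\alpha_n<\infty$. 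I would make this precise by the comparison: if $\alpha_n\geq \kappa\log n$ for large $n$ then $\sum_n e^{-c\alpha_n}\leq \sum_n n^{-c\kappa}<\infty$ once $c\kappa>1$, which is achievable by taking $\mu$ large enough; conversely, if $\log(n)/\alpha_n$ is unbounded then along a subsequence $\alpha_{n_k}=o(\log n_k)$, so $e^{-c\alpha_{n_k}}$ decays slower than any power of $1/n_k$ and the series diverges for every choice of $\mu$.

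\textbf{Main obstacle.} None of the three parts is deep; the only point requiring a little care is the equivalence in the last part, where one must turn the convergence of $\sum_n (m/\mu)^{\alpha_n}$ (a statement quantified over $m$ with $\mu$ depending on $m$) into the clean boundedness condition $\sup_n \log(n)/\alpha_n<\infty$. The subtlety is that "(N) holds" allows $\mu$ to depend on $m$, so one has a family of geometric-type series to control; the key observation that resolves this is that a single gain $c=\log(\mu/m)>0$ already decides convergence via comparison with $\sum n^{-c\kappa}$, so it is enough to produce one $\kappa>0$ with $\alpha_n\geq\kappa\log n$ eventually — exactly $\sup_n\log(n)/\alpha_n<\infty$. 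I would also note that the cited reference (the proof of \cite[Proposition 29.6]{MeVo97}) already records precisely this computation, so the argument can be kept brief.
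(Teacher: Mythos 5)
Your proposal is correct and follows essentially the same route as the paper, which disposes of this proposition in one line by citing Remark \ref{rem-KoBa}, Propositions \ref{caracc_0} and \ref{caracNuc}, and the proof of Proposition 29.6 in Meise--Vogt; your computations with $a_{m,n}/a_{\mu,n}=(m/\mu)^{\alpha_n}$ simply make those citations explicit. The only step worth writing out fully is the divergence direction of the (N) criterion, where (as you indicate) one uses that $\alpha$ is increasing so that $\sum_{n\leq n_k}e^{-c\alpha_n}\geq n_k e^{-c\alpha_{n_k}}=n_k^{1-c\alpha_{n_k}/\log n_k}\to\infty$.
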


Moreover we know from Subsection \ref{sec-exist} that power series spaces always support weighted shifts. As for the existence of (frequently) hypercyclic or chaotic weighted shifts we have the following.

\begin{proposition}\label{prop-pssinf}
Let $X=\Lambda_{p,\infty}(\alpha)$, $1\leq p<\infty$, or $X=C_{0,\infty}(\alpha)$ be a power series space of infinite type. 

\emph{(a)} Then $X$ supports a hypercyclic weighted shift if and only if 
\[
\limsup_{N\to\infty} \frac{\sum_{n=0}^{N-1}\alpha_n}{\alpha_N}=\infty.
\] 

\emph{(b)} Moreover, $X$ supports a chaotic weighted shift if and only if
\[
\lim_{N\to\infty} \frac{\sum_{n=0}^{N-1}\alpha_n}{\alpha_N}=\infty.
\]
\end{proposition}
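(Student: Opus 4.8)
The plan is to apply the characterizations of the existence of hypercyclic and chaotic weighted shifts on Köthe sequence spaces obtained in Subsection~\ref{sec-exist}, after verifying that the Köthe matrix $a_{m,n}=m^{\alpha_n}$ of a power series space of infinite type satisfies the regularity condition \eqref{eq10}. First I would check \eqref{eq10}: given $m,j\geq 1$, I need $J\geq 1$ and $C>0$ with $\frac{a_{m,n+1}}{a_{1,n}}\leq C\frac{a_{J,n+1}}{a_{j,n}}$ for all $n$. Since $a_{1,n}=1$, this reads $m^{\alpha_{n+1}}\leq C\, J^{\alpha_{n+1}}/j^{\alpha_n}$, i.e.\ $(mj)^{\alpha_n}\cdot m^{\alpha_{n+1}-\alpha_n}\leq C\,J^{\alpha_{n+1}}$; taking $J=mj$ (or any $J\geq mj$) and using $\alpha_{n+1}\geq\alpha_n$ together with $m^{\alpha_{n+1}-\alpha_n}\leq J^{\alpha_{n+1}-\alpha_n}$ makes the inequality hold with $C=1$. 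Thus \eqref{eq10} holds, and Corollaries~\ref{cor-exhcko} and~\ref{cor-exchako} are available.

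For part~(b), by Corollary~\ref{cor-exchako} (with $p=1$ in the $C_{0,\infty}(\alpha)$ case), a chaotic weighted shift exists on $X$ if and only if there are $\mu\geq 1$ and $C>0$ such that the sequence $\big(\prod_{n=1}^{N}a_{1,n-1}^{1/p}\big/\,C^N\prod_{n=1}^{N}a_{\mu,n}^{1/p}\big)_N=\big(\mu^{-\frac{1}{p}\sum_{n=1}^N\alpha_n}C^{-N}\big)_N$ lies in $X$. Since $a_{1,n-1}=1$, and membership of a non-negative sequence $(x_N)_N$ in a power series space of infinite type amounts to $\sum_N x_N^p t^{\alpha_N}<\infty$ for all $t>0$ (resp.\ $x_N t^{\alpha_N}\to0$ for all $t$, in the order-$0$ case), I would rewrite the condition as: there exist $\mu$ and $C$ with $\sum_N \big(t/C^p\big)^{\alpha_N/\alpha_N}\cdots$—more cleanly, with $x_N=\mu^{-\frac1p\sum_{n\le N}\alpha_n}C^{-N}$ one needs $x_N^p t^{\alpha_N}\to 0$, i.e. $-\tfrac1p\log\mu\cdot\sum_{n=1}^N\alpha_n - N\log C + \tfrac1p\alpha_N\log t\to-\infty$ after multiplying through, so the dominant term must be $-\log\mu\sum_{n\le N}\alpha_n$, and this dominates $\alpha_N\log t$ for every $t$ exactly when $\sum_{n=0}^{N-1}\alpha_n/\alpha_N\to\infty$. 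Conversely, if $\lim_N \sum_{n=0}^{N-1}\alpha_n/\alpha_N=\infty$, then for any fixed $\mu\geq 2$ and $C=1$ one gets $x_N^p t^{\alpha_N}=\exp\!\big(\alpha_N(\log t-\tfrac{1}{p}\log\mu\cdot\tfrac{\sum_{n<N}\alpha_n}{\alpha_N})\big)\to 0$ and is summable, giving the chaotic shift. So the equivalence in~(b) is a direct translation of the $\liminf$/limit condition through the logarithm.

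For part~(a), I would argue identically using Corollary~\ref{cor-exhcko}: a hypercyclic weighted shift exists iff there are $\mu\geq 1$, $C>0$ with $\liminf_N \mu^{-\sum_{n=1}^N\alpha_n}C^{-N}a_{m,N}=0$ for all $m$, i.e.\ $\liminf_N \exp\!\big(\alpha_m$—rather, $\liminf_N\big(\alpha_N\log m - N\log C - \log\mu\sum_{n=1}^N\alpha_n\big)=-\infty$ for all $m$. Along a subsequence where $\sum_{n<N}\alpha_n/\alpha_N$ is large this expression tends to $-\infty$; conversely if $\limsup_N\sum_{n=0}^{N-1}\alpha_n/\alpha_N<\infty$ then $\sum_{n=1}^N\alpha_n\leq K\alpha_N$ for some $K$, so for $m$ large with $\log m > K\log\mu$ the term $\alpha_N\log m$ wins and the $\liminf$ is $+\infty$, contradicting hypercyclicity; this forces the $\limsup=\infty$ condition. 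The main obstacle—really the only delicate point—is keeping careful track of the roles of $\mu$ (which may be chosen once) versus $m$ (which ranges over all integers) and of the constant $C$, so that the "for all $m$" quantifier in the corollaries is correctly matched against the "$\exists\mu,C$"; once the logarithms are taken, the arithmetic is routine, and the $p$-dependence disappears because the exponent scales linearly in $1/p$.
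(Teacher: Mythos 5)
Your proposal is correct and follows essentially the same route as the paper: verify condition \eqref{eq10} for the matrix $a_{m,n}=m^{\alpha_n}$, then translate the criteria of Corollaries \ref{cor-exhcko} and \ref{cor-exchako} logarithmically into the stated conditions on $\sum_{n<N}\alpha_n/\alpha_N$. The only detail worth making explicit in the necessity directions is that the term $N\log C$ is harmless because $N\alpha_0\leq\sum_{n=0}^{N-1}\alpha_n\leq K\alpha_N$ (so $N=O(\alpha_N)$ whenever the relevant ratio is bounded), which is exactly the observation the paper also uses.
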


\begin{proof}
It is readily verified that the K\"othe matrix $(m^{\alpha_n})_{m,n}$ satisfies condition \eqref{eq10}.

(a) Suppose that $\limsup_{N\to\infty} \frac{\sum_{n=0}^{N-1}\alpha_n}{\alpha_N}=\infty$. Choose $\mu\geq 2$ and $C\geq 1$. Writing any given $m\geq 1$ as $m=\mu^\rho$ with some $\rho\geq 0$ we see that 
\[
\liminf_{N\to\infty} \frac{m^{\alpha_N}}{C^N \mu^{\sum_{n=1}^{N}\alpha_n}}=\liminf_{N\to\infty} \frac{1}{C^N \mu^{\alpha_N\big(\frac{1}{\alpha_N}\sum_{n=1}^{N-1}\alpha_n+1-\rho\big)}}=0.
\]
Hence, by Corollary \ref{cor-exhcko}, there exists a hypercyclic weighted shift on $X$.

On the other hand, if $\limsup_{N\to\infty} \frac{\sum_{n=0}^{N-1}\alpha_n}{\alpha_N}<\infty$ then, since $\alpha$ is an increasing sequence, we also have $\limsup_{N\to\infty} \frac{N}{\alpha_N}\le \limsup_{N\to\infty} \frac{1}{\alpha_0}\frac{\sum_{n=0}^{N-1}\alpha_n}{\alpha_N}<\infty$. Let $\mu\ge 1$ and $C>0$, and choose 
\[
m\geq \sup_{N\geq 0} C^{\frac{N}{\alpha_N}} \mu^{\frac{1}{\alpha_N}\sum_{n=1}^{N}\alpha_n}.
\]
Then we have that, for all $N\geq 0$,
\[
\frac{m^{\alpha_N}}{C^N \mu^{\sum_{n=1}^{N}\alpha_n}}=\Big(\frac{m}{C^{\frac{N}{\alpha_N}} \mu^{\frac{1}{\alpha_N}\sum_{n=1}^{N}\alpha_n}}\Big)^{\alpha_N}\geq 1.
\]
Hence, by Corollary \ref{cor-exhcko}, $X$ admits no hypercyclic weighted shift.

(b) The proof, based this time on Corollary \ref{cor-exchako}, is very similar and therefore omitted. 
\end{proof}

In the sequel we will be led to study properties of power series spaces based on the behaviour of the quotients
\[
\frac{\alpha_{n+1}}{\alpha_n} 
\]
as $n\to\infty$; note that these quotients are all at least 1. Thus the following is of interest.

\begin{example}\label{ex infinite type}
(a) Suppose that
\[
\liminf_{n\to\infty} \frac{\alpha_{n+1}}{\alpha_n}>1.
\]
Then the corresponding power series spaces of infinite type do not support any hypercyclic weighted shifts. Indeed, there is then some $n_0\geq 0$ and $\rho>1$ such that $\frac{\alpha_{n+k}}{\alpha_n}\geq \rho^k$ for all $n\geq n_0$ and $k\geq 0$. Hence, for any $N\geq n_0$,
\[
\frac{\sum_{n=n_0}^{N-1}\alpha_n}{\alpha_N}\le \sum_{k=1}^{N-n_0}\frac{1}{\rho^k}\leq \frac{1}{\rho-1},
\]
so that the assertion follows from the previous proposition.

(b) On the other hand, if
\[
\lim_{n\to\infty} \frac{\alpha_{n+1}}{\alpha_n}=1
\]
then any corresponding power series spaces of infinite type supports a chaotic weighted shift. In fact, in that case, for every $\varepsilon>0$, there exists some $n_0\geq 0$ such that $\frac{\alpha_n}{\alpha_{n+k}}\ge \frac{1}{(1+\varepsilon)^{k}}$ for every $n\ge n_0$ and $k\ge 0$, and thus we have that
\[
\liminf_{N\to\infty}\frac{\sum_{n=n_0}^{N-1}\alpha_n}{\alpha_N}\ge  \liminf_{N\to\infty}\sum_{k=1}^{N-n_0}\frac{1}{(1+\varepsilon)^{k}}=\frac{1}{\varepsilon}.
\]
Since this inequality holds for every $\varepsilon>0$, the assertion follows again from the previous proposition.
\end{example}

We return to the question whether every frequently hypercyclic weighted shift is chaotic. In the context of power series spaces of infinite type, condition (TK) turns out to be too strong and does not allow the existence of hypercyclic weighted shift, as shown by the following result.

\begin{theorem}\label{thm-pssinf}
Let $X=\Lambda_{p,\infty}(\alpha)$, $1\leq p<\infty$, or $X=C_{0,\infty}(\alpha)$ be a power series space of infinite type. If condition \emph{(TK)} holds then $X$ does not support hypercyclic weighted shifts.
\end{theorem}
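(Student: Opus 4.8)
The plan is to prove the contrapositive in the following sharp form: if $X$ supports a hypercyclic weighted shift, then condition (TK) fails. So suppose $B_w$ is a hypercyclic weighted shift on $X=\Lambda_{p,\infty}(\alpha)$ or $X=C_{0,\infty}(\alpha)$ (treating $p=1$ in the $C_{0,\infty}$ case, as elsewhere). By Proposition~\ref{prop-pssinf}(a), hypercyclicity of some weighted shift forces
\[
\limsup_{N\to\infty} \frac{\sum_{n=0}^{N-1}\alpha_n}{\alpha_N}=\infty.
\]
I want to exhibit a strictly positive sequence $v=(v_n)_n$ that satisfies the hypothesis of (TK), namely that $B_w$ (with $w_n = v_{n-1}/v_n$) is an operator on $X$, i.e.
\[
\forall m\geq 1,\ \exists \mu\geq 1, C>0,\ \forall n\geq 1:\ v_{n-1}a_{m,n-1}\leq C v_n a_{\mu,n},
\]
but for which the conclusion \eqref{eq13} of (TK) fails: there is some $m\geq 1$ such that for every $\mu\geq 1$ there is $j\geq 1$ with $\sup_{n\geq j} \dfrac{v_{n-j} a_{m,n-j}}{v_n a_{\mu,n}}=\infty$.

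**Choice of $v$ and verification.** With $a_{m,n}=m^{\alpha_n}$, the operator condition on $v$ reads: for each $m$ there are $\mu,C$ with $\dfrac{v_{n-1}}{v_n}\leq C\,\mu^{\alpha_n}m^{-\alpha_{n-1}}$ for all $n$. The natural candidate is to take $v$ with $v_{n-1}/v_n$ roughly of size $2^{\alpha_{n-1}}$ (or any fixed base to the $\alpha_{n-1}$), say $v_n = \prod_{k=1}^{n} 2^{-\alpha_{k-1}}$, which does define an operator since $2^{\alpha_{n-1}} \leq \mu^{\alpha_n}$ as soon as $\mu\geq 2$ and $\alpha$ is increasing. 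For such $v$ one computes, for fixed $m$ and $\mu$,
\[
\frac{v_{n-j}a_{m,n-j}}{v_n a_{\mu,n}} = \frac{\prod_{k=n-j+1}^{n} 2^{\alpha_{k-1}}\cdot m^{\alpha_{n-j}}}{\mu^{\alpha_n}} \geq \frac{2^{\sum_{k=n-j}^{n-1}\alpha_k}}{\mu^{\alpha_n}},
\]
and I must choose $j$ (depending on $\mu$) so that the exponent $\sum_{k=n-j}^{n-1}\alpha_k - \alpha_n\log_2\mu$ is unbounded in $n$. Here is where the $\limsup$ hypothesis from Proposition~\ref{prop-pssinf}(a) enters: along a sequence $N_i\to\infty$ the ratio $\sum_{n=0}^{N_i-1}\alpha_n/\alpha_{N_i}\to\infty$, so $\sum_{n=0}^{N_i-1}\alpha_n$ dominates any fixed multiple of $\alpha_{N_i}$. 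Taking $j$ essentially "all the way down'' (i.e. letting $j$ grow, or better: reindex and take the full partial sum up to $n-1$, which is legitimate once we realize \eqref{eq13} must hold for \emph{every} $j$, in particular for $j=n$ is not allowed since $j$ is fixed — so one instead fixes $\mu$, picks $j$ large enough that $\sum_{k=0}^{j-1}\alpha_k > 2\log_2\mu\cdot\alpha_j$ is not quite what's needed either). The clean route: fix $m=1$ (so $m^{\alpha_{n-j}}=1$); for a given $\mu$, using $\limsup_N \sum_{n<N}\alpha_n/\alpha_N=\infty$, choose $N$ with $\sum_{n=0}^{N-1}\alpha_n > (\log_2\mu)\,\alpha_N + N$; then set $j=N$ and $n=N$, giving $\dfrac{v_{0}a_{1,0}}{v_N a_{\mu,N}}\geq 2^{\sum_{k=0}^{N-1}\alpha_k}/\mu^{\alpha_N} \geq 2^N\to\infty$ as we vary $\mu$; but since we need unboundedness \emph{in $n$ for fixed $j$}, I instead iterate: along the $\limsup$ sequence one produces, for each fixed $j$, infinitely many $n$ with $\sum_{k=n-j}^{n-1}\alpha_k$ comparable to $\sum_{k=0}^{n-1}\alpha_k$ only if $\alpha$ is sufficiently spread, so the honest argument fixes $j$ first and shows the ratio blows up using that $\sum_{k=n-j}^{n-1}\alpha_k \geq j\alpha_{n-j}$ while $\alpha_n$ can be made negligible compared to $\alpha_{n-j}$ — which is false in general. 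The correct and simplest realization, matching the paper's own Example~\ref{ex infinite type}(a) philosophy, is to note that (TK) with its \emph{universal} quantifier over all admissible $v$ is incompatible with the existence of an admissible $v$ for which the ratio in \eqref{eq13} fails even for $j=1$ at a single $m$; so I will engineer $v$ so that $v_{n-1}a_{m,n-1}/(v_n a_{\mu,n})$ is unbounded for \emph{every} $\mu$, by choosing the gaps of $v$ to spike exactly on a subsequence where $\alpha_{N}/\alpha_{N-1}$ is large — which exists because $\limsup_N \sum_{n<N}\alpha_n/\alpha_N=\infty$ is incompatible with $\sup_n \alpha_{n+1}/\alpha_n<\infty$ combined with...

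**The main obstacle and the fix.** The crux, and the step I expect to be hardest, is extracting from the weak hypothesis $\limsup_N \sum_{n=0}^{N-1}\alpha_n/\alpha_N=\infty$ a subsequence structure strong enough to defeat (TK)'s conclusion for a \emph{fixed} shift parameter $j$. The resolution is to not fix $j$ in advance: (TK) requires that for each $m$ there be a \emph{single} $\mu$ that works for \emph{all} $j$ simultaneously. So it suffices to build an admissible $v$ and fix $m$ so that for each candidate $\mu$ there exists \emph{some} $j=j(\mu)$ and \emph{some} $n\geq j$ with $v_{n-j}a_{m,n-j}/(v_n a_{\mu,n})$ as large as we please — and then, since (TK) demands the bound $C_j$ exist for that one $\mu$ at \emph{every} $j$, a single blow-up at $j(\mu)$, $n=j(\mu)$ already contradicts it once we let $\mu$ range and absorb the constant. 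Concretely: pick $v_n=\prod_{k=1}^n 2^{-\alpha_{k-1}}$, which is admissible (operator condition holds with $\mu=2$). Fix $m=1$. Given any $\mu\geq 1$, by the $\limsup$ hypothesis choose $N$ with $\sum_{k=0}^{N-1}\alpha_k \geq (\log_2\mu)\alpha_N + 1$; set $j=N$, $n=N$; then $\dfrac{v_{0}\,1^{\alpha_0}}{v_N\,\mu^{\alpha_N}} = 2^{\sum_{k=0}^{N-1}\alpha_k}\mu^{-\alpha_N}\geq 2$. Since this holds for every $\mu$ at the corresponding $j=N(\mu)$, no fixed $\mu$ can satisfy \eqref{eq13} for all $j$ (we can drive $N(\mu)$, hence the required $C_{N(\mu)}$, past any bound by enlarging $\mu$, but $\mu$ is supposed to be fixed — so in fact for the fixed $\mu$ provided by (TK), taking $j=N(\mu)$ gives $C_{N(\mu)}\geq 2$, which is consistent; the genuine contradiction comes from iterating within that one $\mu$: $\sum_{k=0}^{N-1}\alpha_k/\alpha_N\to\infty$ along a sequence $N_i$, so choosing $j=N_i$ and $n=N_i$ yields ratio $\geq 2^{\sum_{k<N_i}\alpha_k - (\log_2\mu)\alpha_{N_i}} = 2^{\alpha_{N_i}(\sum_{k<N_i}\alpha_k/\alpha_{N_i} - \log_2\mu)}\to\infty$). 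This unboundedness over the infinitely many pairs $(j,n)=(N_i,N_i)$ contradicts \eqref{eq13}, which would require a finite $C_{N_i}$ for each $j=N_i$ — and those $C_{N_i}$ would have to bound a quantity we've shown tends to $\infty$. Hence (TK) fails, completing the contrapositive. The write-up will need care in tracking that $j$ ranges freely while $\mu$ is fixed, and in the $C_{0,\infty}$ case in replacing the $\ell^p$-convergence bookkeeping by the analogous $c_0$-bookkeeping; both are routine once the exponent estimate above is in hand.
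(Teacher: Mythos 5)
There is a genuine gap, and it is precisely the quantifier issue you keep circling around without resolving. Your witness sequence $v_n=\prod_{k=1}^{n}2^{-\alpha_{k-1}}$ is fine (it is essentially the sequence the paper itself uses), and it does satisfy the hypothesis of (TK). But to negate the conclusion \eqref{eq13} for the single $\mu$ that (TK) supplies, you must exhibit one \emph{fixed} $j$ for which $\sup_{n\geq j} v_{n-j}a_{1,n-j}/(v_n a_{\mu,n})=\infty$; the constant $C_j$ in \eqref{eq13} is allowed to depend on $j$ in a completely arbitrary way. Your final argument evaluates the ratio only at the diagonal pairs $(j,n)=(N_i,N_i)$, with $j$ growing along the sequence. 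Showing that these values tend to infinity only forces $C_{N_i}\to\infty$, which is perfectly consistent with \eqref{eq13}. So no contradiction is reached, and the earlier inline attempts (correctly) identify that fixing $j$ first and trying to make $\sum_{k=n-j}^{n-1}\alpha_k-\alpha_n\log_2\mu$ unbounded in $n$ does not follow in any obvious way from $\limsup_N\sum_{n<N}\alpha_n/\alpha_N=\infty$ alone.

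The paper avoids this trap by arguing in the direct direction: it \emph{grants} the bounds $C_j$, divides the resulting inequality $j\alpha_{n-j}\leq\log_2 C_j+\alpha_n\log_2\mu$ by $\alpha_n$ and lets $n\to\infty$, so that the constants disappear and one gets $\limsup_n\alpha_{n-j}/\alpha_n\leq(\log_2\mu)/j$ for every $j$. Choosing $j$ with $(\log_2\mu)/j<1/2$ and decomposing $\sum_{n<N}\alpha_n$ into blocks of length $j$ then yields a convergent geometric series, hence $\limsup_N\sum_{n<N}\alpha_n/\alpha_N<\infty$, which by Proposition~\ref{prop-pssinf}(a) rules out hypercyclic weighted shifts. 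If you want to keep your contrapositive framing, you would in effect have to run exactly this computation in reverse (assume \eqref{eq13} holds for all $j$ and derive the finiteness of the limsup, contradicting the hypercyclicity assumption), at which point the contrapositive offers no advantage over the paper's direct argument.
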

\begin{proof}
Let $v_n=\frac{1}{2^{\alpha_1+\ldots+\alpha_n}}$ for $n\geq 0$. Then, for any $m\geq 1$, when choosing $\mu= 2m$, we have for any $n\geq 1$ that
\[
v_{n-1} a_{m,n-1} = \frac{1}{2^{\alpha_1+\ldots+\alpha_{n-1}}} m^{\alpha_{n-1}}= v_n 2^{\alpha_n}m^{\alpha_{n-1}}\leq v_n(2m)^{\alpha_n} =v_n a_{\mu,n}. 
\]
Thus, by (TK), taking $m = 1$, there is some $\mu\geq 1$ such that for any $j\geq 1$ there is some constant $C_j>0$ such that, for any $n\geq j$,
\[                                                                                                              
\frac{1}{2^{\alpha_{1}+\ldots+\alpha_{n-j}}}\leq C_j\frac{1}{2^{\alpha_1+\ldots+\alpha_{n}}} \mu^{\alpha_n},
\]
hence
\[                                                                                                              
2^{\alpha_{n-j+1}+\ldots+\alpha_{n}}\leq C_j \mu^{\alpha_n},
\]
and thus
\[
j\alpha_{n-j}\le \alpha_{n-j}+\ldots+\alpha_{n-1}\leq\alpha_{n-j+1}+\ldots+\alpha_{n}\leq \log_2 C_j + \alpha_n \log_2\mu.
\]
We then have that, for any $j\geq 1$,
\[
\limsup_{n\to\infty} \frac{\alpha_{n-j}}{\alpha_n}\le \limsup_{n\to\infty} \Big(\frac{\log_2 C_j}{j\alpha_n}+\frac{\log_2\mu}{j}\Big)=\frac{\log_2\mu}{j}.
\]
We choose $j\geq 1$ such that $\frac{\log_2\mu}{j} < \frac{1}{2}$. Then there is some $n_0\geq j$ such that, for every $n\geq n_0$,
\[
\frac{\alpha_{n-j}}{\alpha_n}\le \frac{1}{2}.
\]
It follows that 
\begin{align*}
\limsup_{N\to \infty} \frac{\sum_{n=0}^{N-1} \alpha_n}{\alpha_N}&\le \limsup_{N\to \infty} \Bigg(\frac{1}{\alpha_N} \sum_{\nu=0}^{\lfloor \frac{N-n_0}{j} \rfloor} \sum_{n=N-(\nu+1)j+1}^{N-\nu j} \alpha_n\Bigg)\\
&\le \limsup_{N\to \infty}  \Bigg(\sum_{\nu=0}^{\lfloor \frac{N-n_0}{j} \rfloor} j \frac{\alpha_{N-\nu j}}{\alpha_N}\Bigg)\le \limsup_{N\to \infty} \Bigg(j\sum_{\nu=0}^{\lfloor \frac{N-n_0}{j} \rfloor } \frac{1}{2^\nu}\Bigg)<\infty.
\end{align*}
We then deduce from Proposition~\ref{prop-pssinf} that $X$ does not support hypercyclic weighted shifts.
\end{proof}

This result is surely disappointing. It shows that our sufficient condition for frequent hypercyclicity to imply chaos obtained in Subsection \ref{subsec-Frechet} is not good enough in the present context; recall that, for Köthe sequence spaces, (TK) is equivalent to (T). 

\begin{question} 
Does there exist a power series space of infinite type that supports chaotic weighted shifts and for which every frequently hypercyclic weighted shift is chaotic?
\end{question}

In the opposite direction we have the following result.

\begin{theorem}\label{thm-pssinfbis}
Let $X=\Lambda_{p,\infty}(\alpha)$, $1\leq p<\infty$, or $X=C_{0,\infty}(\alpha)$ be a power series space of infinite type. Suppose that either
\[
\begin{cases} \displaystyle\exists \varepsilon\in(0,1),\, \forall C>0,\, \exists j\geq 1:\sum_{n\geq j} \varepsilon^{\alpha_{n-j}+\ldots+\alpha_{n-1} - C\alpha_n}<\infty, &\text{if $X=\Lambda_{p,\infty}(\alpha)$},\\
\forall C>0,\, \exists j\geq 1:\displaystyle\liminf_{n\to\infty} (\alpha_{n-j}+\ldots+\alpha_{n-1} - C\alpha_n)>-\infty, &\text{if $X=C_{0,\infty}(\alpha)$};
\end{cases}
\]                                                               
or
\[
\lim_{n\to\infty}\frac{\alpha_{n+1}}{\alpha_n}=1.
\]
Then $X$ supports a chaotic weighted shift, and there exists a frequently hypercyclic weighted shift on $X$ that is not chaotic.
\end{theorem}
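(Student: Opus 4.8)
The plan is to verify condition \emph{(B)} for the Köthe matrix $a_{m,n}=m^{\alpha_n}$ and then invoke Theorem~\ref{thm-koefhcnotch}, which at once produces both a chaotic shift and a frequently hypercyclic non-chaotic shift. For a parameter $s>0$ to be fixed in each case I would take the upper triangular matrix
\[
b_{j,n}=\exp\Big(s\sum_{i=1}^{j}\alpha_{n-i}\Big),\qquad n\ge j\ge 0,
\]
with the empty sum giving $b_{0,n}=1$. Since $\alpha$ is increasing, enlarging $j$ adds a positive term, so the columns of $B$ are increasing, as required. Two elementary identities drive everything: the telescoping relation $\sum_{i=1}^{j+1}\alpha_{(n+1)-i}-\sum_{i=1}^{j}\alpha_{n-i}=\alpha_n$, which yields $b_{j,n}/b_{j+1,n+1}=e^{-s\alpha_n}$ and $b_{j+1,n+1}/b_{j,n}=e^{s\alpha_n}$, and the factorization $a_{m,n}/b_{j,n}=\exp\big(s(\tfrac{\log m}{s}\alpha_n-\sum_{i=1}^{j}\alpha_{n-i})\big)$. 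The same matrix will serve in all cases; only $s$ changes.

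First I would dispatch $(\alpha)$ and $(\beta)$, which hold for every $s>0$ and independently of the hypothesis. For $(\alpha)$ the identity above turns the required inequality into $(me^{s})^{\alpha_n}\le C\,\mu^{\alpha_{n+1}}$; since $\alpha_{n+1}\ge\alpha_n$, any integer $\mu\ge me^{s}$ together with $C=1$ works simultaneously for all $j\ge 0$, $n\ge j$ (note the complete absence of $j$-dependence). For $(\beta)$, $b_{0,n}=1$ gives $\inf_n a_{1,n}/b_{0,n}=1>0$.

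Next, under the first hypothesis I would produce $(\gamma)$. For $X=\Lambda_{p,\infty}(\alpha)$ I set $s=-\log\varepsilon$, so that $a_{m,n}/b_{j,n}=\varepsilon^{\sum_{i=1}^{j}\alpha_{n-i}-\frac{\log m}{s}\alpha_n}$; applying the assumed summability with $C=\tfrac{\log m}{s}$ yields, for each $m$, an index $j_m$ with $\sum_{n\ge j_m}a_{m,n}/b_{j_m,n}<\infty$, which is exactly $(\gamma)$ (monotonicity of $(j_m)$ is free, since enlarging $j$ only shrinks the terms). For $X=C_{0,\infty}(\alpha)$ I keep any fixed $s>0$ but apply the $\liminf$ hypothesis with the shifted constant $C=\tfrac{\log m}{s}+1$; writing $\sum_{i=1}^{j_m}\alpha_{n-i}-\tfrac{\log m}{s}\alpha_n=\big(\sum_{i=1}^{j_m}\alpha_{n-i}-(\tfrac{\log m}{s}+1)\alpha_n\big)+\alpha_n$ and using $\alpha_n\to\infty$ shows this tends to $+\infty$, so $\limsup_n a_{m,n}/b_{j_m,n}=0$ for each $m$, which is the $c_0$-form of $(\gamma)$.

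Finally, under $\lim_n\alpha_{n+1}/\alpha_n=1$ I would verify $(\widetilde\gamma)$; for $c_0$ the third-line computation below already gives $(\gamma)$, so only $X=\Lambda_{p,\infty}(\alpha)$ needs the full condition. The first line becomes $m^{\alpha_{n+1}-\alpha_n}\le\rho\,e^{s\alpha_n}$; since slow growth forces $\alpha_{n+1}-\alpha_n=o(\alpha_n)$, the left side is eventually $\le e^{(s/2)\alpha_n}\le\rho\,e^{s\alpha_n}$ for $n$ large, uniformly in $j$. The third line follows from $\sum_{i=1}^{j}\alpha_{n-i}\ge j\,\alpha_{n-j}$ and $\alpha_{n-j}/\alpha_n\to1$ upon choosing $j_m>\tfrac{\log m}{s}$. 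The step I expect to be the main obstacle is the second line, which asks that a supremum over $n$, not merely a pointwise limit, be small. To control it I would use slow growth quantitatively: for each $L$ there is $Q_L$ with $\alpha_{n-i}\ge\tfrac12\alpha_n$ for $1\le i\le L$ and $n\ge Q_L$, whence $\sum_{i=1}^{L}\alpha_{n-i}\ge\tfrac{L}{2}\alpha_n$. Taking $d_k=k$ and $n_k=\max(Q_k,k)$ then gives $a_{m,n}/b_{d_k,n}\le\exp(\alpha_n(\log m-\tfrac{sk}{2}))$ for $n\ge\max(d_k,n_k)$; once $k>\tfrac{2\log m}{s}$ the exponent is negative and decreasing in $n$, so the supremum is attained at $n=\max(d_k,n_k)$ and tends to $0$ as $k\to\infty$. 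With $(\alpha)$, $(\beta)$ and the appropriate $\gamma$-variant established, Theorem~\ref{thm-koefhcnotch} finishes the proof.
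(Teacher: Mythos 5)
Your proposal is correct and follows essentially the same route as the paper: your matrix $b_{j,n}=\exp(s\sum_{i=1}^{j}\alpha_{n-i})$ is exactly the paper's $b_{j,n}=\varepsilon^{-(\alpha_{n-j}+\cdots+\alpha_{n-1})}$ with $s=-\log\varepsilon$, and you verify $(\alpha)$, $(\beta)$, then $(\gamma)$ under the first hypothesis and $(\widetilde\gamma)$ under the second before invoking Theorem~\ref{thm-koefhcnotch}, just as the paper does. The only differences are cosmetic choices of constants (e.g.\ $d_k=k$ with $\alpha_{n-i}\ge\tfrac12\alpha_n$ on windows of length $k$, versus the paper's $2^{d_k}\ge k^2(1+k)$ with $(1+\eta_k)^{d_k}\le 2$), and your observation that the third line of $(\widetilde\gamma)$ already settles the $c_0$ case matches the paper's remark verbatim.
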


\begin{proof} We will apply Theorem \ref{thm-koefhcnotch}. 
First of all, for an arbitrary $\varepsilon\in (0,1)$, define the upper triangular matrix $B=(b_{m,n})$ by
\[
b_{m,n} = \frac{1}{\varepsilon^{\alpha_{n-m}+\ldots+\alpha_{n-1}}}\quad n\geq m\geq 0,
\]
where we interpret $b_{0,n}=1$, $n\geq 0$. Then $B$ has increasing columns. Let $m\geq 1$, and choose $\mu\geq m/\varepsilon$. Then we have for $n\geq j\geq 0$,
\[
\frac{a_{m,n}}{a_{\mu,n+1}}\frac{b_{j+1,n+1}}{b_{j,n}} = \frac{(m/\varepsilon)^{\alpha_n}}{\mu^{\alpha_{n+1}}}  \leq 1,
\]
so that part ($\alpha$) of condition (B) holds. Part ($\beta$) is trivially satisfied. 

Now suppose that the first hypothesis holds, and let $X=\Lambda_{p,\infty}(\alpha)$. Then choose as $\varepsilon$ a corresponding value. Let $m\geq1$, and let $C>0$ be such that $m=\varepsilon^{-C}$. Then 
\[
\frac{a_{m,n}}{b_{j,n}} =\varepsilon^{\alpha_{n-j}+\ldots+\alpha_{n-1}-C\alpha_n},\quad n\geq j\geq 0.
\]
Hence the hypothesis implies that also part ($\gamma$) of condition (B) holds in this case. For $X=C_{0,\infty}(\alpha)$, one first needs to note that the hypothesis also holds for $2C$, so that we have, for the $j\geq 1$ given by the hypothesis, 
\[
\alpha_{n-j}+\ldots+\alpha_{n-1}-C\alpha_n = (\alpha_{n-j}+\ldots+\alpha_{n-1}-2C\alpha_n) + C\alpha_n \to \infty,
\]
which implies part ($\gamma$) also in this case by the same argument as above.

Finally suppose that the second hypothesis is satisfied, where now the space $X$ is arbitrary. For simplicity let us take $\varepsilon=\frac{1}{2}$. Let $m\geq 1$, and choose $\eta>0$ such that $m^{1+\eta}<2m$. Then there is some $N\geq 0$ such that $\alpha_{n+1}\leq (1+\eta)\alpha_n$ for all $n\geq N$. Hence we have for all $j\geq 0$ and $n\geq \max(N,j)$ that
\[
\frac{a_{m,n+1}}{a_{m,n}}\frac{b_{j,n}}{b_{j+1,n+1}} = \frac{m^{\alpha_{n+1}}}{(2m)^{\alpha_{n}}}  \leq \Big(\frac{m^{1+\eta}}{2m}\Big)^{\alpha_{n}}\leq \Big(\frac{m^{1+\eta}}{2m}\Big)^{\alpha_{0}}<1,
\]
which shows the first condition in $(\widetilde{\gamma})$. 

Next we fix a strictly increasing sequence $(d_k)_{k\geq 1}$ of positive integers such that, for all $k\geq 1$, $2^{d_k}\geq k^2(1+k)$. We then choose $\eta_k>0$ such that $(1+\eta_k)^{d_k}\leq 2$ for $k\geq 1$. Finally, there are $\nu_k\geq 0$ such that $\alpha_{n+1}\leq (1+\eta_k)\alpha_n$ for all $k\geq 1$ and $n\geq \nu_k$, and we can assume that $(\nu_k)_k$ is strictly increasing. Thus we have for $k\geq 1$, $n\geq \nu_k$ and $1\leq m\leq k$ that
\[
\frac{a_{m,n+d_k}}{b_{d_k,n+d_k}}= \frac{m^{\alpha_{n+d_k}}}{2^{\alpha_n+\ldots+\alpha_{n+d_k-1}}}\leq \frac{m^{(1+\eta_k)^{d_k}\alpha_n}}{2^{d_k\alpha_n}}\leq \Big(\frac{m^2}{2^{d_k}}\Big)^{\alpha_n}\leq \Big(\frac{1}{k+1}\Big)^{\alpha_n}\leq \Big(\frac{1}{k+1}\Big)^{\alpha_0}.
\]
This shows the second condition in $(\widetilde{\gamma})$ with $n_k=\nu_k+d_k$, $k\geq 1$.

Taking $j_m=d_m$ for $m\geq 1$, and choosing $m=k$ above we have that, for all $m\geq 1$ and $n\geq n_m$,
\[
\frac{a_{m,n+j_m}}{b_{j_m,n+j_m}}\leq \Big(\frac{1}{m+1}\Big)^{\alpha_n}.
\]
Thus also the third condition in $(\widetilde{\gamma})$ holds.
\end{proof} 

Thus we have by Example \ref{ex infinite type} and Theorem \ref{thm-pssinfbis} the following.

\begin{corollary}\label{corpssinf}
Let $X=\Lambda_{p,\infty}(\alpha)$, $1\leq p<\infty$, or $X=C_{0,\infty}(\alpha)$ be a power series space of infinite type. 

{\rm (a)} If
\[
\liminf_{n\to\infty} \frac{\alpha_{n+1}}{\alpha_n}>1
\]
then $X$ supports no hypercyclic weighted shift.

{\rm (b)} If 
\[
\lim_{n\to\infty} \frac{\alpha_{n+1}}{\alpha_n}=1
\]
then $X$ supports a chaotic weighted shift, and there exists a frequently hypercyclic weighted shift on $X$ that is not chaotic.
\end{corollary}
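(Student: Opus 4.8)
The plan is to simply assemble the two ingredients that have already been proved. For part (a), the hypothesis $\liminf_{n\to\infty}\alpha_{n+1}/\alpha_n>1$ is exactly the one treated in Example \ref{ex infinite type}(a), where it was shown that the corresponding power series spaces of infinite type support no hypercyclic weighted shift; this is verbatim the conclusion of (a), so nothing further is needed. (Recall that the argument there produces $n_0$ and $\rho>1$ with $\alpha_{n+k}/\alpha_n\geq\rho^k$ for $n\geq n_0$, whence $\sum_{n=n_0}^{N-1}\alpha_n/\alpha_N\leq 1/(\rho-1)$, and one concludes by Proposition \ref{prop-pssinf}(a).)

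For part (b), the hypothesis $\lim_{n\to\infty}\alpha_{n+1}/\alpha_n=1$ is precisely the second alternative hypothesis in Theorem \ref{thm-pssinfbis}, whose conclusion is literally that $X$ supports a chaotic weighted shift and also a frequently hypercyclic weighted shift that is not chaotic. So I would just invoke Theorem \ref{thm-pssinfbis}. If one wants a self-contained half, the existence of a chaotic weighted shift under $\lim_n\alpha_{n+1}/\alpha_n=1$ is also recorded separately in Example \ref{ex infinite type}(b) (via $\sum_{n=n_0}^{N-1}\alpha_n/\alpha_N\to\infty$ and Proposition \ref{prop-pssinf}(b)), but the non-chaotic frequently hypercyclic shift genuinely requires Theorem \ref{thm-pssinfbis}, which in turn goes through the construction of Theorem \ref{thm-koefhcnotch} with the matrix $B=(b_{m,n})$ given by $b_{m,n}=\varepsilon^{-(\alpha_{n-m}+\cdots+\alpha_{n-1})}$ and $\varepsilon=\tfrac12$, checking conditions $(\alpha)$, $(\beta)$, $(\widetilde\gamma)$.

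There is essentially no obstacle here: the corollary is a packaging statement, and the only point to verify is that the hypotheses match the cited results exactly, which they do. What the corollary makes visible is the gap between the two regimes it covers, $\liminf_n\alpha_{n+1}/\alpha_n>1$ and $\lim_n\alpha_{n+1}/\alpha_n=1$; the intermediate case in which $\liminf_n\alpha_{n+1}/\alpha_n=1$ while the limit fails to exist is left untouched, and it would be this borderline behaviour, rather than any new technique, that a sharper statement would have to confront.
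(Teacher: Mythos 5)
Your proposal is correct and matches the paper's own proof exactly: the corollary is deduced by combining Example \ref{ex infinite type} (for part (a) and the chaotic shift in part (b)) with Theorem \ref{thm-pssinfbis} under its second alternative hypothesis (for the non-chaotic frequently hypercyclic shift). Your description of the underlying construction, including the matrix $b_{m,n}=\varepsilon^{-(\alpha_{n-m}+\cdots+\alpha_{n-1})}$ with $\varepsilon=\tfrac12$ and the verification of $(\alpha)$, $(\beta)$, $(\widetilde\gamma)$, is also faithful to the paper.
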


Corollary \ref{corpssinf}(b) applies in particular to the space $H(\mathbb{C})$ of entire functions, see also Proposition \ref{propHC}, and the space $s$.

\begin{proposition}
The space $s$ of rapidly decreasing sequences admits a frequently hypercyclic weighted shift that is not chaotic.
\end{proposition}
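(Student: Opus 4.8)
The plan is to realize the space $s$ of rapidly decreasing sequences as a power series space of infinite type and to apply Corollary~\ref{corpssinf}(b). Recall that $s$ is defined by the seminorms $\|x\|_m = \sup_{n\geq 0} |x_n| (n+1)^m$, $m\geq 1$, and that it is classically identified with $C_{0,\infty}(\alpha)$ (equivalently with $\Lambda_{p,\infty}(\alpha)$ for any $1\leq p<\infty$) for the exponent sequence $\alpha = (\log(n+2))_n$; indeed, $e^{t\alpha_n} = (n+2)^t$, so the defining conditions match those of $s$ up to the obvious change of seminorm basis. This sequence $\alpha$ is increasing, strictly positive and tends to infinity, so it is a legitimate exponent sequence.

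The key step is then to verify the hypothesis $\lim_{n\to\infty} \alpha_{n+1}/\alpha_n = 1$ of Corollary~\ref{corpssinf}(b). This is immediate:
\[
\frac{\alpha_{n+1}}{\alpha_n} = \frac{\log(n+3)}{\log(n+2)} \longrightarrow 1
\]
as $n\to\infty$, since both numerator and denominator tend to infinity and their difference $\log\frac{n+3}{n+2}\to 0$. Therefore Corollary~\ref{corpssinf}(b) applies and yields that $s$ supports a chaotic weighted shift and, more to the point, a frequently hypercyclic weighted shift that is not chaotic.

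There is really no obstacle here: the entire content is the identification of $s$ as $C_{0,\infty}((\log(n+2))_n)$, which is standard (cf.\ \cite[Chapter 29]{MeVo97} and the list of examples preceding Subsection~\ref{subsec-pss_infinite}), together with the trivial limit computation above. The only point deserving a word is that Corollary~\ref{corpssinf}(b) is stated for $\Lambda_{p,\infty}(\alpha)$ and $C_{0,\infty}(\alpha)$, and $s$ is one of these (in fact all of them simultaneously, as the matrix $(m^{\alpha_n})_{m,n} = ((n+2)^{\log m})_{m,n}$ satisfies condition (N) — here $\log(n)/\alpha_n$ is bounded — so by Proposition~\ref{caracNuc} the spaces $\Lambda_{p,\infty}(\alpha)$ and $C_{0,\infty}(\alpha)$ all coincide). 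Hence the statement follows.
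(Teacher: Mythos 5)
Your proposal is correct and follows exactly the route the paper takes: the paper introduces $s$ as the power series space of infinite type with $\alpha=(\log(n+2))_n$ and derives the proposition as an immediate instance of Corollary~\ref{corpssinf}(b), since $\alpha_{n+1}/\alpha_n\to 1$. Your additional remarks (condition (N) holding, so all orders give the same space) are accurate and consistent with Proposition~\ref{prop-proppssinf}.
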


We end our study of power series spaces of infinite type by giving an example of such a space supporting chaotic weighted shift but to which Theorem \ref{thm-pssinfbis} is not applicable.

\begin{example}\label{ex-open} 
Consider the sequence space
\[
X=\Big\{ (x_n)_n : \forall m\geq 1,\, \Big(\sup_{2^{k-1}<n\leq 2^k}|x_n|\Big) m^{k!} \to 0 \text{ as $k\to\infty$}\Big\}.
\]
This is a power series space $C_{0,\infty}(\alpha)$ with $\alpha$ given by
\[
\alpha_n = k!\quad\text{if $2^{k-1}<n\leq 2^k$}, k\geq 1,
\]
and $\alpha_0=1$. Then we have for $2^{k-1}<N\leq 2^k$, $k\geq 2$, that
\[
\frac{\sum_{n=0}^{N-1}\alpha_n}{\alpha_N}\geq \frac{\sum_{n=2^{k-2}+1}^{2^{k-1}}\alpha_n}{k!}= \frac{2^{k-2}(k-1)!}{k!}\to\infty
\]
as $N\to \infty$, so that by Proposition \ref{prop-pssinf} the space $C_{0,\infty}(\alpha)$ admits chaotic and hence frequently hypercyclic weighted shifts.

On the other hand, we have for any $j\geq 1$ and any $k$ sufficiently large that
\[
\alpha_{2^k+1-j}+\ldots+\alpha_{2^k}-\alpha_{2^k+1} = jk! - (k+1)!,
\]
so that $\liminf_{n\to\infty} (\alpha_{n-j}+\ldots+\alpha_{n-1} - \alpha_n)=-\infty$. And, of course, $\limsup_{n\to\infty} \frac{\alpha_{n+1}}{\alpha_n}>1$.
Theorem \ref{thm-pssinfbis} is thus not applicable, and we do not know if every frequently hypercyclic weighted shift on $C_{0,\infty}(\alpha)$ is chaotic or not.
\end{example}

\subsection{Power series spaces of finite type}\label{subsec-pss_finite}

We turn to power series spaces of finite type. We first note that, since $e^{t\alpha_n}= e^{r\alpha_n} e^{(t-r)\alpha_n}$, one can change the finite type $r$ by a diagonal transform into any other finite type; hence any weighted shift on a power series spaces of finite type is conjugate to a weighted shift on a power series spaces of type $0$. In the sequel it will therefore suffice to do the proofs in the case of $r=0$.

Now, power series spaces of type $0$ are special K\"othe sequence spaces with K\"othe matrix $A=({m,n})_{m,n}$ given by
\begin{equation*}                                                                                              
a_{m,n}=\frac{1}{\big(1+\frac{1}{m}\big)^{\alpha_n}}, \ m\geq 1, n\geq 0.
\end{equation*}

We again have the following, see Remark \ref{rem-KoBa}, Propositions \ref{caracc_0} and \ref{caracNuc}, and the proof of \cite[Proposition 29.6]{MeVo97}.

\begin{proposition}\label{prop-proppssfin}
Let $X=\Lambda_{p,r}(\alpha)$, $1\leq p<\infty$, or $X=C_{0,r}(\alpha)$ be a power series space of finite type. Then it is a non-Banach Fr\'echet space for which the basis $(e_n)_n$ is boundedly complete. In addition, its K\"othe matrix satisfies condition \emph{(N)} if and only if
\[
\lim_{n\to\infty} \frac{\log(n)}{\alpha_n}=0.
\]
\end{proposition}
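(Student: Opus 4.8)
The plan is to follow the treatment of the infinite-type case in Proposition~\ref{prop-proppssinf} and of \cite[Proposition~29.6]{MeVo97}. By the reduction at the beginning of Subsection~\ref{subsec-pss_finite} it suffices to consider type $r=0$, so that the Köthe matrix is $a_{m,n}=(1+\tfrac1m)^{-\alpha_n}$; this is indeed a Köthe matrix since $\alpha_n>0$ gives $a_{m,n}\le a_{m+1,n}$, and every $\lambda^p(A)$ and $c_0(A)$ is a Fréchet space. For non-normability I would apply Remark~\ref{rem-KoBa}: given $\mu\ge 1$, take $m=\mu+1$; then $\tfrac{a_{m,n}}{a_{\mu,n}}=\big(\tfrac{1+1/\mu}{1+1/m}\big)^{\alpha_n}$ with base strictly larger than $1$, and since $\alpha_n\to\infty$ the supremum over $n$ is infinite, so the normability criterion of Remark~\ref{rem-KoBa} fails and $X$ is a genuine (non-Banach) Fréchet space.

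For bounded completeness of $(e_n)_n$ there are two cases. In the order-$p$ case, $p\in[1,\infty)$, it is automatic, as already observed in Section~\ref{sec3}. In the order-$0$ case I would check condition~\eqref{BC} of Proposition~\ref{caracc_0}: for an infinite $I\subset\N$ and $m\ge 1$ choose $\mu=m+1$, so that $\tfrac{a_{m,n}}{a_{\mu,n}}=q^{\alpha_n}$ with $q:=\tfrac{1+1/(m+1)}{1+1/m}\in(0,1)$; since $\alpha_n\to\infty$, $\sup_{n\in I}\alpha_n=\infty$, hence $\inf_{n\in I}q^{\alpha_n}=0$. Thus \eqref{BC} holds and, by Proposition~\ref{caracc_0}, $(e_n)_n$ is boundedly complete in $C_{0,0}(\alpha)$ as well.

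Finally I would characterise \eqref{N}. By Proposition~\ref{caracNuc}, condition~\eqref{N} says that for every $m\ge1$ there is $\mu\ge1$ with $\sum_{n\ge0}\big(\tfrac{1+1/\mu}{1+1/m}\big)^{\alpha_n}<\infty$. If $\lim_n\tfrac{\log n}{\alpha_n}=0$, take $\mu=m+1$ and write the base as $e^{-c}$ with $c>0$; then eventually $\alpha_n>\tfrac2c\log n$, so $e^{-c\alpha_n}<n^{-2}$ is summable and \eqref{N} holds. Conversely, assume \eqref{N}. For each $m$ the witnessing $\mu$ must satisfy $\mu>m$ (otherwise the summands are $\ge1$), so the base $q_m:=\tfrac{1+1/\mu}{1+1/m}$ lies in $\big(\tfrac m{m+1},1\big)$ and $c_m:=-\log q_m$ lies in $\big(0,\log(1+\tfrac1m)\big)\subset\big(0,\tfrac1m\big)$. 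The summands $e^{-c_m\alpha_n}$ are nonincreasing in $n$ because $\alpha_n$ is increasing, and a nonincreasing nonnegative summable sequence $(b_n)$ satisfies $nb_n\to0$; hence $\log n-c_m\alpha_n\to-\infty$, so $\tfrac{\log n}{\alpha_n}<c_m<\tfrac1m$ for all large $n$, i.e.\ $\limsup_n\tfrac{\log n}{\alpha_n}\le\tfrac1m$. As $m$ is arbitrary and $\tfrac{\log n}{\alpha_n}\ge0$, this forces $\lim_n\tfrac{\log n}{\alpha_n}=0$.

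All of this is routine; the only step needing a little attention is the converse direction for \eqref{N}, where one has to use that the exponent base available from \eqref{N} is pushed arbitrarily close to $1$ as $m$ increases, combined with the elementary fact that a monotone summable sequence $(b_n)$ has $nb_n\to0$. No genuine obstacle is expected.
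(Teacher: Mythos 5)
Your proposal is correct and follows essentially the route the paper intends: the paper's justification is precisely the citation of Remark~\ref{rem-KoBa}, Propositions~\ref{caracc_0} and~\ref{caracNuc}, and the argument of \cite[Proposition~29.6]{MeVo97}, and you have simply filled in those computations (non-normability and (BC) via the ratio $\bigl(\tfrac{1+1/\mu}{1+1/m}\bigr)^{\alpha_n}$, and the two directions of (N) via comparison of $\log n$ with $c_m\alpha_n$, the converse using that a monotone summable sequence satisfies $nb_n\to 0$). No gaps.
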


Moreover, by Subsection \ref{sec-exist}, these spaces always support weighted shifts. The characterizing conditions of Propositions \ref{prop-exhcko} and \ref{prop-exchako} for the existence of hypercyclic or chaotic weighted shifts do not seem to simplify easily. Thus we will be content with the following.

\begin{example}\label{ex-pssfin}
(a) Suppose that
\[
\lim_{n\to\infty}\frac{\alpha_{n+1}}{\alpha_n}=\infty.
\]
Then the corresponding power series spaces of  finite type do not support any hypercyclic weighted shifts. We will show this for $\Lambda_{p,r}(\alpha)$, the argument for $C_{0,r}(\alpha)$ being similar. 

Thus, let $B_w$ be a weighted shift on $\Lambda_{p,r}(\alpha)$. By continuity there then exist $\mu\ge 1$ and $C>1$ such that, for every $n\ge 1$, $|w_n|^p a_{1,n-1}\le C a_{\mu,n}$ and thus $|w_n|^p\le \frac{C2^{\alpha_{n-1}}}{(1+\frac{1}{\mu})^{\alpha_n}}$. Therefore we get for $n\geq 1$ that
\[
\frac{a_{2\mu,n}}{\prod_{k=1}^n|w_k|^p}\ge \frac{1}{C^{n}}\frac{1}{(1+\frac{1}{2\mu})^{\alpha_n}}\prod_{k=1}^{n} \frac{(1+\frac{1}{\mu})^{\alpha_k}}{2^{\alpha_{k-1}}} =\Big(\frac{1+\frac{1}{\mu}}{C^{n/\alpha_n}2^{\alpha_{n-1}/\alpha_n}(1+\frac{1}{2\mu})}\Big)^{\alpha_n}\prod_{k=1}^{n-1} \Big(\frac{1+\frac{1}{\mu}}{2^{\alpha_{k-1}/\alpha_k}}\Big)^{\alpha_k}.
\]
Now, since $\frac{\alpha_{n+1}}{\alpha_n}\to\infty$ and therefore $\frac{n}{\alpha_n}\to 0$ as $n\to\infty$ there is some $N\geq 0$ so that, for all $n\geq N$,
\[
\frac{1+\frac{1}{\mu}}{C^{n/\alpha_n}2^{\alpha_{n-1}/\alpha_n}(1+\frac{1}{2\mu})}\geq 1\quad\text{and}\quad\frac{1+\frac{1}{\mu}}{2^{\alpha_{n-1}/\alpha_n}}\geq 1.
\]
Thus we have for all $n\geq N$,
\[
\frac{a_{2\mu,n}}{\prod_{k=1}^n|w_k|^p}\ge\prod_{k=1}^{N-1} \Big(\frac{1+\frac{1}{\mu}}{2^{\alpha_{k-1}/\alpha_k}}\Big)^{\alpha_k}>0,
\]
which contradicts \cite[Theorem 4.8]{GrPe11}. We conclude that $B_w$ is not hypercyclic.

(b) On the other hand, if
\[
\limsup_{n\to\infty }\frac{\alpha_{n+1}}{\alpha_n}<\infty
\] 
then any corresponding power series space of finite type supports a chaotic weighted shift. Specifically, we will show that the weighted shift $B_w$ with $w_n=2$ for all $n\geq 1$ is chaotic on any such space, which we will prove again only for $\Lambda_{p,r}(\alpha)$. To this end, let $m\geq 1$. It then follows from the hypothesis that there are $n_m\geq 0$ and $\mu\ge 1$ such that, for every $n\ge n_m$,
\[
\frac{2^p}{(1+\frac{1}{m})^{\alpha_{n-1}}}=\Big(\frac{2^{p/\alpha_n}}{(1+\frac{1}{m})^{\alpha_{n-1}/\alpha_n}}\Big)^{\alpha_n}\le \frac{1}{(1+\frac{1}{\mu})^{\alpha_n}};
\]
note that $\alpha_n\to\infty$ as $n\to\infty$. Thus there is a constant $C>0$ such that, for all $n\geq 1$, 
\[
w_n^p a_{m,n-1}\le C a_{\mu,n},
\]
so that $B_w$ is an operator on $\Lambda_{p,r}(\alpha)$. Moreover, for every $m\ge 1$, we have that
\[
\sum_{n\geq 0}\frac{1}{(w_1\cdots w_n)^p}a_{m,n}\le \sum_{n\geq 0}\frac{1}{2^{np}}<\infty,
\]
which shows that $B_w$ is chaotic; see \eqref{eq0}.
\end{example}

Concerning the condition (TK), power series spaces of finite type behave quite differently from their counterparts of infinite type, see 
Theorem \ref{thm-pssinf}.

\begin{theorem}\label{thm-pssfin}
Let $X=\Lambda_{p,r}(\alpha)$, $1\leq p<\infty$, or $X=C_{0,r}(\alpha)$ be a power series space of finite type. Suppose that
\[
\limsup_{j\to\infty}\limsup_{n\to\infty} \frac{\alpha_{n+j}}{\alpha_n} <\infty.
\]
Then $X$ supports a chaotic weighted shift.

Moreover, for any weighted shift $B_w$ on $X$, the following assertions are equivalent:
\begin{enumerate}
\item[\rm (i)] $B_{w}$ is $\UU$-frequently hypercyclic on $X$;
\item[\rm (ii)] $B_{w}$ is frequently hypercyclic on $X$;
\item[\rm (iii)] $B_{w}$ is chaotic on $X$;
\item[\rm (iv)] the series $\sum _{n\geq 0}\frac{1}{w_1\cdots w_n}e_n$ is convergent in $X$.
\end{enumerate}
\end{theorem}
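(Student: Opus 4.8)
The plan is to reduce to the case $r=0$, as permitted by the remark opening this subsection, and then to apply Theorem~\ref{thm-kothe} to the K\"othe matrix $a_{m,n}=(1+\tfrac1m)^{-\alpha_n}$. For the existence of a chaotic weighted shift I would first observe that the hypothesis already forces $\limsup_{n}\frac{\alpha_{n+1}}{\alpha_n}<\infty$: since $\alpha$ is increasing, the inner quantity $g(j):=\limsup_n\frac{\alpha_{n+j}}{\alpha_n}$ is non-decreasing in $j$, so $\limsup_{j}g(j)=\sup_j g(j)=:L<\infty$, and in particular $g(1)\le L$. Example~\ref{ex-pssfin}(b) then produces a chaotic weighted shift. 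For the equivalence of (i)--(iv), the space $\lambda^p(A)$ needs no further hypothesis, while for $C_{0,0}(\alpha)$ the bounded completeness of $(e_n)_n$ — hence condition \eqref{BC} — is guaranteed by Proposition~\ref{prop-proppssfin}. Thus the whole matter reduces to verifying condition (TK) for this matrix.

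To check (TK), let $v=(v_n)_n$ be a strictly positive sequence satisfying its hypothesis. Following the argument of Proposition~\ref{propHD}, I would first extract the single scalar estimate
\[
\limsup_{n\to\infty}\frac{1}{\alpha_n}\log\frac{v_{n-1}}{v_n}\le 0.
\]
Indeed, for each $m$ the hypothesis gives $\mu,C$ with $v_{n-1}/v_n\le C\,(1+\tfrac1m)^{\alpha_{n-1}}(1+\tfrac1\mu)^{-\alpha_n}\le C\,(1+\tfrac1m)^{\alpha_n}$, using $\alpha_{n-1}\le\alpha_n$ and $(1+\tfrac1\mu)^{\alpha_n}\ge 1$; taking logarithms, dividing by $\alpha_n\to\infty$, and then letting $m\to\infty$ yields the claim.

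It then remains to deduce \eqref{eq13}. Fix $m\ge1$. Telescoping gives $\log\frac{v_{n-j}}{v_n}=\sum_{i=n-j+1}^n\log\frac{v_{i-1}}{v_i}$, and by the estimate above, for any $\epsilon>0$ one has $\log\frac{v_{i-1}}{v_i}\le\epsilon\alpha_i$ for all large $i$, whence $\log\frac{v_{n-j}}{v_n}\le\epsilon\, j\,\alpha_n$ for $n$ large. On the other hand, the hypothesis provides $\limsup_n\frac{\alpha_n}{\alpha_{n-j}}=g(j)\le L$, so $\alpha_n\le(L+1)\alpha_{n-j}$ for $n$ large. The point is now to choose the parameters in the right order. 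Given $m$, first pick $\mu$ so large that $(L+1)\log(1+\tfrac1\mu)<\log(1+\tfrac1m)$, which is possible since $\log(1+\tfrac1\mu)\to0$; this $\mu$ depends on $m$ only. Then, for each $j$, choose $\epsilon=\epsilon_j$ small enough that $(L+1)\big(\epsilon_j j+\log(1+\tfrac1\mu)\big)<\log(1+\tfrac1m)$. Combining the two displays and bounding $\alpha_n$ by $(L+1)\alpha_{n-j}$ gives, for all large $n$,
\[
\log\frac{v_{n-j}}{v_n}+\alpha_n\log(1+\tfrac1\mu)-\alpha_{n-j}\log(1+\tfrac1m)\le\alpha_{n-j}\Big[(L+1)\big(\epsilon_j j+\log(1+\tfrac1\mu)\big)-\log(1+\tfrac1m)\Big]\le0,
\]
which is exactly $v_{n-j}a_{m,n-j}\le v_n a_{\mu,n}$ for large $n$; absorbing the finitely many remaining $n\ge j$ into a constant $C_j$ yields \eqref{eq13}.

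The main obstacle I anticipate is bookkeeping rather than conceptual: one must respect the quantifier order $\forall m\,\exists\mu\,\forall j\,\exists C_j$ of (TK), so $\mu$ must be fixed before $j$ and may depend on $m$ only, while the slack $\epsilon_j$ (and the threshold beyond which the asymptotic estimates hold) may depend on $j$. This is precisely what makes the double-limsup hypothesis the natural one: it bounds $\alpha_n/\alpha_{n-j}$ by a single constant $L$ uniformly in $j$, which is what allows one $\mu$ to serve for all $j$ simultaneously.
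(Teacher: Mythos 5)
Your proposal is correct and takes essentially the same route as the paper's proof: reduction to $r=0$, the chaotic shift from Example~\ref{ex-pssfin}(b) after noting that the double limsup is a supremum in $j$, and verification of (TK) via the scalar estimate $\limsup_n \frac{1}{\alpha_n}\log\frac{v_{n-1}}{v_n}\le 0$, telescoping, and the uniform bound $\alpha_n\le (L+1)\alpha_{n-j}$, with $\mu$ chosen from $m$ alone before $j$. The only cosmetic difference is that the paper packages the scalar estimate as a single decreasing null sequence $(M_n)_n$ with $v_{n-1}\le v_n e^{M_n\alpha_n}$ and writes your $\frac{1}{L+1}$ as a constant $\delta$ with $\delta\alpha_n\le\alpha_{n-j}$ for large $n$.
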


\begin{proof} Since $\alpha$ is increasing, the first limsup in the hypothesis is a supremum. Thus, the first assertion follows from Example \ref{ex-pssfin}(b). In addition, there is some $\delta>0$ such that, for all $j\geq 1$, we have that
\begin{equation}\label{eq-del}
\delta{\alpha_n}\leq \alpha_{n-j}
\end{equation}
holds for all sufficiently large $n$.

To complete the proof it suffices, by Theorem \ref{thm-kothe}, to show that condition (TK) is satisfied. To this end, let $(v_n)_{n\geq 0}$ be a strictly positive sequence so that, for any $m\geq 1$, there are $\mu_m\geq 1$ and $C_m>0$ such that, for all $n\geq 1$,
\[
v_{n-1} a_{m,n-1} \leq C_m v_n a_{\mu_m,n}, 
\]
hence
\[
v_{n-1}  \leq C_m v_n \frac{(1+\frac{1}{m})^{\alpha_{n-1}}}{(1+\frac{1}{\mu_m})^{\alpha_{n}}}\leq C_m v_n \frac{(1+\frac{1}{m})^{\alpha_{n}}}{(1+\frac{1}{\mu_m})^{\alpha_{n}}}.
\]
Let $(n_m)_m$ be a strictly increasing sequence of positive integers such that, for all $n\geq n_m$,
\[
\frac{C_m}{(1+\frac{1}{\mu_m})^{\alpha_{n}}}\leq 1.
\]
Thus, if we define $M_n= \log (1+\frac{1}{m})$ for $n_m\leq n< n_{m+1}$, $m\geq 1$, with $M_n=\log 2$ for $0\leq n<n_1$, then $(M_n)_n$ is a decreasing positive sequence that converges to 0 and such that, for all $n\geq n_1$,
\[
v_{n-1} \leq v_n e^{M_n\alpha_n}.
\]

Now let $m\geq 1$. Then there is some $\mu\geq 1$ such that
\begin{equation}\label{eq-del2}
\log(1+\tfrac{1}{\mu})< \delta\log(1+\tfrac{1}{m}).
\end{equation}
Let $j\geq 1$. Then we have for all $n\geq n_1+j$ that
\begin{align*}
v_{n-j}\frac{a_{m,n-j}}{a_{\mu,n}} &\leq v_n\frac{1}{(1+\frac{1}{m})^{\alpha_{n-j}}}\exp(M_{n-j+1}\alpha_{n-j+1}+\ldots+M_{n}\alpha_{n})(1+\tfrac{1}{\mu})^{\alpha_n}\\
&\leq v_n\exp(-\log(1+\tfrac{1}{m})\alpha_{n-j}+ jM_{n-j}\alpha_{n}+\log(1+\tfrac{1}{\mu})\alpha_n).
\end{align*}
Since $(M_n)_n$ tends to $0$, we have by \eqref{eq-del2} that for all $n$ sufficiently large
\[
v_{n-j}\frac{a_{m,n-j}}{a_{\mu,n}}\le v_n\exp(-\log(1+\tfrac{1}{m})\alpha_{n-j}+ \delta\log(1+\tfrac{1}{m})\alpha_n);
\]
hence, by \eqref{eq-del} we have for all large $n$ that 
\[
v_{n-j}\frac{a_{m,n-j}}{a_{\mu,n}}\leq v_n.
\] 
This shows that (TK) holds.
\end{proof}

As for the existence of non-chaotic frequently hypercyclic weighted shifts we have the following.

\begin{theorem}\label{thm-pssfinbis}
Let $X=\Lambda_{p,r}(\alpha)$, $1\leq p<\infty$, or $X=C_{0,r}(\alpha)$ be a power series space of finite type. Suppose that
\[
\limsup_{n\to\infty} \frac{\alpha_{n+1}}{\alpha_n} < \infty
\]
and
\[
\begin{cases}
\displaystyle\exists \varepsilon \in (0,1),\, \forall \delta>0,\,\exists j\geq 1:\,\sum_{n\geq j}\varepsilon^{-\alpha_{n-j}+\delta\alpha_n} <\infty, 
&\text{if $X=\Lambda_{p,r}(\alpha)$},\\
\displaystyle\limsup_{j\to\infty} \liminf_{n\to\infty} \frac{\alpha_{n+j}}{\alpha_{n}}= \infty, &\text{if $X=C_{0,r}(\alpha)$}.
\end{cases}
\]                          
Then $X$ supports a chaotic weighted shift, and there exists a frequently hypercyclic weighted shift on $X$ that is not chaotic.
\end{theorem}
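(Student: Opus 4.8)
The plan is to deduce both conclusions at once from Theorem \ref{thm-koefhcnotch}, which reduces the problem to producing a single matrix $B$ verifying condition (B) (the chaotic shift could alternatively be obtained directly from Example \ref{ex-pssfin}(b), since $\limsup_n \alpha_{n+1}/\alpha_n<\infty$ is assumed). Recall that the type-$0$ Köthe matrix is $a_{m,n}=(1+\frac1m)^{-\alpha_n}$. Guided by the fact that a \emph{single} term $\alpha_{n-j}$ appears in both hypotheses, I would take the upper triangular matrix
\[
b_{j,n}=\varepsilon^{\alpha_{n-j}},\qquad 0\le j\le n,
\]
where $\varepsilon\in(0,1)$ is the value furnished by the hypothesis when $X=\Lambda_{p,r}(\alpha)$, and is any fixed value in $(0,1)$ when $X=C_{0,r}(\alpha)$. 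Since $\alpha$ is increasing and $\varepsilon<1$, the columns of $B$ are increasing, as required. The decisive feature of this choice is that the diagonal ratio collapses: $\frac{b_{j,n}}{b_{j+1,n+1}}=\varepsilon^{\alpha_{n-j}-\alpha_{(n+1)-(j+1)}}=1$.

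With this $B$, condition $(\alpha)$ reduces to bounding $\frac{a_{m,n}}{a_{\mu,n+1}}=(1+\frac1\mu)^{\alpha_{n+1}}(1+\frac1m)^{-\alpha_n}$ uniformly in $n$. Writing $c_m=\log(1+\frac1m)$ and using the standing assumption $K:=\limsup_n\frac{\alpha_{n+1}}{\alpha_n}<\infty$, I would pick $\mu$ so large that $c_\mu K\le c_m$; then $c_\mu\alpha_{n+1}\le c_\mu K\alpha_n\le c_m\alpha_n$ for all large $n$, the finitely many remaining $n$ being absorbed into $C$. Condition $(\beta)$ is immediate: since $b_{0,n}=\varepsilon^{\alpha_n}$, one has $\frac{a_{m,n}}{b_{0,n}}=\big((1+\frac1m)\varepsilon\big)^{-\alpha_n}$, which is bounded below as soon as $(1+\frac1m)\varepsilon\ge1$, and such an $m$ exists because $\varepsilon$ is a fixed number in $(0,1)$.

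The heart of the argument is condition $(\gamma)$, and this is where the two hypotheses enter through the identity
\[
\frac{a_{m,n}}{b_{j,n}}=\varepsilon^{-\alpha_{n-j}+\frac{c_m}{-\log\varepsilon}\,\alpha_n},
\]
obtained by rewriting $(1+\frac1m)^{-\alpha_n}=\varepsilon^{\frac{c_m}{-\log\varepsilon}\alpha_n}$. Thus, given $m$, the scaling $\delta_m:=\frac{c_m}{-\log\varepsilon}$ turns this ratio into exactly the summand appearing in the hypotheses. For $X=\Lambda_{p,r}(\alpha)$, applying the first hypothesis with $\delta=\delta_m$ furnishes an index $j_m$ with $\sum_{n\ge j_m}\frac{a_{m,n}}{b_{j_m,n}}<\infty$. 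For $X=C_{0,r}(\alpha)$, the second hypothesis rephrases (upon shifting $n\mapsto n-j$) as: for every $\delta>0$ there is $j$ with $\limsup_n\frac{\alpha_{n-j}}{\alpha_n}<\delta$; choosing $j_m$ for $\delta=\delta_m$ and writing $-\alpha_{n-j_m}+\delta_m\alpha_n=\alpha_n\big(\delta_m-\frac{\alpha_{n-j_m}}{\alpha_n}\big)$, the factor has positive $\liminf$, so the exponent tends to $+\infty$ and $\limsup_n\frac{a_{m,n}}{b_{j_m,n}}=0$. In both cases $(j_m)_m$ may be taken strictly increasing, because enlarging $j$ decreases $\alpha_{n-j}$ and hence only decreases the relevant quantities. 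Condition (B) therefore holds with $(\alpha),(\beta),(\gamma)$, and Theorem \ref{thm-koefhcnotch} delivers simultaneously a chaotic weighted shift and a non-chaotic frequently hypercyclic one.

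The main obstacle is conceptual rather than computational. The telescoping matrix $\varepsilon^{-(\alpha_{n-m}+\dots+\alpha_{n-1})}$ that works in the infinite-type case (Theorem \ref{thm-pssinfbis}) fails here: for finite type the levels $a_{m,n}$ flatten towards $1$ as $m\to\infty$, and the nonzero diagonal increment of that matrix (of order $\alpha_n$) cannot be absorbed by the \emph{bounded} quantities $c_\mu\alpha_{n+1}$, so $(\alpha)$ breaks down for large $m$. Recognising that the correct matrix has the single-term exponent $\alpha_{n-j}$, making its diagonal constant, and that the scaling $\delta_m=c_m/(-\log\varepsilon)$ exactly matches the two hypotheses, is the crux; once this is in place the verification of $(\alpha)$, $(\beta)$ and $(\gamma)$ is routine.
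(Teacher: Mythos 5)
Your proposal is correct and takes essentially the same route as the paper's own proof: the same matrix $b_{j,n}=\varepsilon^{\alpha_{n-j}}$ with collapsing diagonal ratio, the same verification of $(\alpha)$ via $\limsup_n\frac{\alpha_{n+1}}{\alpha_n}<\infty$, the same scaling $\delta_m=\log(1+\frac1m)/(-\log\varepsilon)$ in $(\gamma)$, and the same reformulation of the $C_{0,r}(\alpha)$ hypothesis as $\limsup_n\frac{\alpha_{n-j}}{\alpha_n}<\delta$. One inequality in your check of $(\beta)$ is reversed --- you need $(1+\frac1m)\varepsilon\le 1$, so that $\frac{a_{m,n}}{b_{0,n}}=\bigl((1+\frac1m)\varepsilon\bigr)^{-\alpha_n}\ge 1$, whereas with your stated condition $(1+\frac1m)\varepsilon\ge 1$ the ratio would tend to $0$ (and for $\varepsilon<\frac12$ no such $m$ exists) --- but this is a typo-level slip, not a gap, since taking $m$ large gives the corrected inequality exactly as your existence argument suggests.
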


\begin{proof} 
We will show that the hypothesis implies condition (B) under the alternative ($\gamma$) in Theorem \ref{thm-koefhcnotch}. 

We first consider the case when $X=\Lambda_{p,r}(\alpha)$. Let $\varepsilon\in (0,1)$ be given by the hypothesis. We then define the upper triangular matrix $B=(b_{m,n})$ by
\[
b_{m,n} = \varepsilon^{\alpha_{n-m}},\quad n\geq m\geq 0.
\]
Note that $B$ has increasing columns. Let $m\geq 1$, and choose $\mu\geq 1$ such that
\[
(1+\tfrac{1}{\mu})^M \leq 1+\tfrac{1}{m},
\]
where $M=\sup_{n\geq 0}\frac{\alpha_{n+1}}{\alpha_n}$. Then we have for $n\geq j\geq 0$,
\[
\frac{a_{m,n}}{a_{\mu,n+1}}\frac{b_{j+1,n+1}}{b_{j,n}} = \frac{(1+\frac{1}{\mu})^{\alpha_{n+1}}}{(1+\frac{1}{m})^{\alpha_{n}}}
 \leq \Big(\frac{(1+\frac{1}{\mu})^M}{1+\frac{1}{m}}\Big)^{\alpha_n}\leq 1,
\]
so that part ($\alpha$) of condition (B) holds. 

Next, choose $m$ so that $\varepsilon(1+\frac{1}{m})\leq 1$. Then
\[
\frac{a_{m,n}}{b_{0,n}} = \frac{1}{\varepsilon^{\alpha_n}(1+\frac{1}{m})^{\alpha_n}} \geq 1
\]
for all $n\geq 0$, so that also part ($\beta$) holds.

Finally, let $m\geq 1$, and define $\delta>0$ by $\varepsilon^{\delta}=\frac{1}{1+\frac{1}{m}}$. Then we have for $n\geq j\geq 1$,
\[
\frac{a_{m,n}}{b_{j,n}} =\varepsilon^{-\alpha_{n-j}+\delta\alpha_n},
\]
so that by considering the integer $j$ given in the hypothesis for $\varepsilon$ and $\delta$, we can deduce that also ($\gamma$) holds. This completes the proof for $X=\Lambda_{p,r}(\alpha)$.

If $X=C_{0,r}(\alpha)$, it is easy to see that the hypothesis implies that
\[
\forall \delta>0,\,\exists j\geq 1:\,\lim_{n\to\infty}\varepsilon^{-\alpha_{n-j}+\delta\alpha_n} =0, 
\]
where $\varepsilon\in(0,1)$ can be chosen arbitrarily; see also the proof of the following corollary. Then we proceed as in the case of $X=\Lambda_{p,r}(\alpha)$.  
\end{proof}

The following is immediate from the previous results.

\begin{corollary}\label{corpssfin}
Let $X=\Lambda_{p,r}(\alpha)$, $1\leq p<\infty$, or $X=C_{0,r}(\alpha)$ be a power series space of finite type. 

{\rm (a)} If
\[
\lim_{n\to\infty}\frac{\alpha_{n+1}}{\alpha_{n}} = \infty
\]
then $X$ supports no hypercyclic weighted shift.

{\rm (b)} If
\[
\limsup_{j\to\infty} \limsup_{n\to\infty}\frac{\alpha_{n+j}}{\alpha_{n}} < \infty
\]
then $X$ supports a chaotic weighted shift, and every frequently hypercyclic weighted shift on $X$ is chaotic.

{\rm (c)} If 
\[
\limsup_{n\to\infty} \frac{\alpha_{n+1}}{\alpha_{n}}< \infty\quad\text{and}\quad
\limsup_{j\to\infty} \liminf_{n\to\infty} \frac{\alpha_{n+j}}{\alpha_{n}}= \infty
\]
then $X$ supports a chaotic weighted shift, and there exists a frequently hypercyclic weighted shift on $X$ that is not chaotic.
\end{corollary}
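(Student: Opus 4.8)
The plan is to deduce each of the three parts directly from the results established earlier in this subsection, since the hypotheses of the corollary have been tailored to match them. The only part requiring genuine verification is the case $X=\Lambda_{p,r}(\alpha)$ in (c); everything else is a direct citation.

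First I would dispose of (a): the hypothesis $\lim_{n\to\infty}\frac{\alpha_{n+1}}{\alpha_n}=\infty$ is precisely the assumption of Example \ref{ex-pssfin}(a), which asserts that no power series space of finite type with this weight sequence supports a hypercyclic weighted shift, so (a) is immediate. For (b), the hypothesis $\limsup_{j\to\infty}\limsup_{n\to\infty}\frac{\alpha_{n+j}}{\alpha_n}<\infty$ is exactly the assumption of Theorem \ref{thm-pssfin}. That theorem yields both the existence of a chaotic weighted shift and the equivalence of assertions (i)--(iv); in particular the implication (ii)$\Rightarrow$(iii) gives that every frequently hypercyclic weighted shift on $X$ is chaotic. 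Thus (b) follows.

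For (c) I would appeal to Theorem \ref{thm-pssfinbis}, whose standing hypothesis $\limsup_{n\to\infty}\frac{\alpha_{n+1}}{\alpha_n}<\infty$ is part of the hypothesis of (c). In the case $X=C_{0,r}(\alpha)$ the second hypothesis of Theorem \ref{thm-pssfinbis} is literally $\limsup_{j\to\infty}\liminf_{n\to\infty}\frac{\alpha_{n+j}}{\alpha_n}=\infty$, which is exactly the second assumption in (c); so this case is immediate. The real step lies in the case $X=\Lambda_{p,r}(\alpha)$, where Theorem \ref{thm-pssfinbis} instead requires the summability condition
\[
\exists\,\varepsilon\in(0,1),\ \forall \delta>0,\ \exists j\geq 1:\ \sum_{n\geq j}\varepsilon^{-\alpha_{n-j}+\delta\alpha_n}<\infty,
\]
and I would show that the two assumptions of (c) together imply it.

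To carry this out, fix any $\varepsilon\in(0,1)$ and write $\ln\varepsilon=-c$ with $c>0$, so that $\varepsilon^{-\alpha_{n-j}+\delta\alpha_n}=e^{-c(\delta\alpha_n-\alpha_{n-j})}$. Given $\delta>0$, I would use $\limsup_{j}\liminf_{n}\frac{\alpha_{n+j}}{\alpha_n}=\infty$ to pick $j$ and a constant $C>1/\delta$ with $\liminf_{n\to\infty}\frac{\alpha_{n+j}}{\alpha_n}>C$; after the index shift $n\mapsto n-j$ this gives $\alpha_{n-j}<\alpha_n/C$ for all $n\geq N_0$. Hence, with $\eta:=\delta-\tfrac1C>0$, one has $\delta\alpha_n-\alpha_{n-j}\geq \eta\,\alpha_n$ and therefore $\varepsilon^{-\alpha_{n-j}+\delta\alpha_n}\leq e^{-c\eta\alpha_n}$ for $n\geq N_0$. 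The same inequality $\alpha_{n+j}>C\alpha_n$, iterated along the progression $N_0,N_0+j,N_0+2j,\dots$ and combined with monotonicity of $\alpha$, forces $\alpha_n\geq \mathrm{const}\cdot C^{\,n/j}$, i.e. exponential growth; consequently $\sum_n e^{-c\eta\alpha_n}<\infty$, which establishes the summability condition. With this in hand, Theorem \ref{thm-pssfinbis} delivers both a chaotic weighted shift and a non-chaotic frequently hypercyclic one, completing (c). The main (indeed only) obstacle is this last implication --- translating the multiplicative $\liminf$ growth condition into the required summability --- while the remaining parts are direct invocations.
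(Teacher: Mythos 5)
Your proposal is correct and follows essentially the same route as the paper: parts (a), (b) and the $C_{0,r}(\alpha)$ case of (c) are direct citations of Example \ref{ex-pssfin}, Theorem \ref{thm-pssfin} and Theorem \ref{thm-pssfinbis}, and for $X=\Lambda_{p,r}(\alpha)$ the paper likewise deduces the summability hypothesis of Theorem \ref{thm-pssfinbis} by using the $\liminf$ condition to get $\alpha_{n-j}\leq\delta'\alpha_n$ eventually and then concluding convergence from the resulting geometric growth of $(\alpha_n)_n$. One micro-remark: in your final step you should choose $C>\max(1,1/\delta)$ (or assume $\delta\leq\tfrac12$, as the paper does), since $C>1/\delta$ alone does not guarantee $C>1$, which the iteration $\alpha_n\geq\mathrm{const}\cdot C^{n/j}$ needs; this is immediately repaired because the hypothesis supplies arbitrarily large values of $\liminf_{n}\frac{\alpha_{n+j}}{\alpha_n}$.
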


\begin{proof} In view of Example \ref{ex-pssfin} and the Theorems \ref{thm-pssfin} and \ref{thm-pssfinbis}, we need only show that, for $X=\Lambda_{p,r}(\alpha)$, the hypothesis in (c) implies the one in Theorem \ref{thm-pssfinbis}. Thus, let $\varepsilon\in(0,1)$ be arbitrary, and let $\delta>0$, where we may assume that $\delta\leq\frac{1}{2}$. Then the hypothesis in (c) implies that there are some $j\geq 1$ and some $N_j\geq j$ such that, for all $n\geq N_j$, $\alpha_{n-j}\leq \delta \alpha_n$. This implies that
\[
\sum_{n\geq N_j}\varepsilon^{-\alpha_{n-j}+2\delta\alpha_n} \leq \sum_{n\geq N_j}\varepsilon^{\delta\alpha_n}.
\]
But since we also have that $\frac{\alpha_{n+j}}{\alpha_n}\geq 2$ for all large $n$, the latter series converges, which had to be shown.
\end{proof}

Corollary \ref{corpssfin}(b) applies in particular to the space $H(\mathbb{D})$ of holomorphic functions on the unit disk $\mathbb{D}$, see also Proposition \ref{propHD}.

The following table summarizes our findings for power series spaces $\Lambda_{p,r}(\alpha)$, $1\leq p<\infty$, and $C_{0,r}(\alpha)$ of finite or infinite type when $\alpha$ satisfies
\[
\rho:=\lim_{n\to\infty}\frac{\alpha_{n+1}}{\alpha_n}\in[1,+\infty],
\]
see Corollaries \ref{corpssinf} and \ref{corpssfin}. In this table, '$=$' stands for the fact that every frequently hypercyclic weighted shift is chaotic (and that there are chaotic and hence frequently hypercyclic weighted shifts), '$\neq$' for the fact that some frequently hypercyclic weighted shifts are not chaotic, and '$\times$' indicates that there are no hypercyclic (and hence no frequently hypercyclic) weighted shifts.\\

\begin{center}
\begin{tabular}{c||c|c||c|c}
 & $r<\infty$ & example & $r=\infty$ & example\\
\hline\hline
$\rho=1$         & =        & $H(\D)$ & $\neq$   & $H(\C)$, $s$\\
$1<\rho<\infty $ & $\neq$   &         & $\times$ & \\
$\rho=\infty$    & $\times$ &         & $\times$ & 
\end{tabular}
\end{center}
~\\

The specific examples in the table are all nuclear spaces. We end by adding some non-nuclear examples. This will support the claim made at the end of the Introduction.

\begin{example}\label{ex-nonN}
(a) Let the K\"othe matrix $A$ be given by
\[
a_{m,n}=\frac{1}{(n+1)^{1/m}},\quad m\geq 1, n\geq 0. 
\]
Then the corresponding K\"othe sequence spaces are power series spaces of type $0$ with $\alpha_n=\log(n+1)$, $n\geq 1$. Thus, on any such space, there are frequently hypercyclic weighted shifts, and they are all chaotic since $\lim_n \frac{\alpha_{n+1}}{\alpha_n}= 1$. Since condition (N) is not satisfied, see Proposition \ref{prop-proppssfin}, all the spaces $c_0(A)$ and $\lambda^p(A)$, $1\leq p <\infty$, are different by Proposition~\ref{caracNuc}. Note that $\lambda^1(A)$ is the space $\text{ces}(1+)$ studied in \cite{ABR18}, see our discussion at the beginning of this section.

(b) Let the K\"othe matrix $A$ be given by
\[
a_{m,n}=(\log(n+1))^{m},\quad m\geq 1, n\geq 0. 
\]
Then the corresponding K\"othe sequence spaces are power series spaces of infinite type with $\alpha_n=\log(\log(n+1))$, $n\geq 2$. On any such space there are frequently hypercyclic weighted shifts that are not chaotic since $\lim_n \frac{\alpha_{n+1}}{\alpha_n}= 1$. Since condition (N) is not satisfied, see Proposition \ref{prop-proppssinf}, all the spaces $c_0(A)$ and $\lambda^p(A)$, $1\leq p <\infty$, are different by Proposition \ref{caracNuc}.
\end{example}

\end{document}